\theoremstyle{plain}
\newtheorem{theorem}{Theorem}[section]
\crefname{theorem}{theorem}{theorems}
\Crefname{theorem}{Theorem}{Theorems}
\newtheorem{corollary}[theorem]{Corollary}
\crefname{corollary}{corollary}{corollaries}
\Crefname{corollary}{Corollary}{Corollaries}
\newtheorem{lemma}[theorem]{Lemma}
\crefname{lemma}{lemma}{lemmata}
\Crefname{lemma}{Lemma}{Lemmata}
\newtheorem{proposition}[theorem]{Proposition}
\crefname{proposition}{proposition}{propositions}
\Crefname{proposition}{Proposition}{Propositions}
\theoremstyle{definition}
\newtheorem{definition}[theorem]{Definition}
\crefname{definition}{definition}{definitions}
\Crefname{definition}{Definition}{Definitions}
\crefname{example}{example}{examples}
\Crefname{example}{Example}{Examples}
\theoremstyle{remark}
\newtheorem{remark}[theorem]{Remark}
\crefname{remark}{remark}{remarks}
\Crefname{remark}{Remark}{Remarks}
\newenvironment{myex}
{\pushQED{\qed}\example}
{\popQED\endexample}
\newcommand{\none}{^{k+1}}
\newcommand{\nonehalf}{^{k+1/2}}
\newcommand{\n}{^{k}}
\newcommand{\discrete}[1]{\overline{#1}}
\newcommand{\discretetimestep}[1]{{#1^{k,k+1}}}
\newcommand{\costate}{z}
\newcommand{\diag}{\mathrm{diag}}
\newcommand{\discreteE}{\overline{E}}
\newcommand{\discreteinput}{\discretetimestep{u}}
\newcommand{\discreteJ}{\overline{J}}
\newcommand{\discreteoutput}{\discretetimestep{y}}
\newcommand{\discreteR}{\overline{R}}
\newcommand{\discreteB}{\overline{B}}
\newcommand{\discretedissipation}{\discretetimestep{W_{\mathrm{diss}}}}
\newcommand{\discretestate}{\state}
\newcommand{\discretecostate}{\overline{\costate}}
\newcommand{\discretecostateone}{\discretetimestep{\costate_1}}
\newcommand{\discretecostateoneq}{\discretetimestep{\costate_{1,q}}}
\newcommand{\discretecostateonev}{\discretetimestep{\costate_{1,v}}}
\newcommand{\finaltime}{t_{\mathrm{end}}}
\newcommand{\hamiltonian}{\mathcal{H}}
\newcommand{\statespace}{\mathcal{X}}
\newcommand{\cont}{\mathcal{C}}
\newcommand{\ntimesteps}{N}
\newcommand{\R}{\mathbb{R}}
\newcommand{\N}{\mathbb{N}}
\newcommand{\rank}{\mathrm{rank}}
\newcommand{\state}{x}
\newcommand{\timeinterval}{\mathbb{T}}
\newcommand{\specified}[1]{#1_1}
\renewcommand{\d}{{\,\rm  d}}
\newcommand{\transp}{^\top}
\newcommand{\ntransp}{^{-\top}}
\renewcommand{\phi}{\varphi}
\newcommand{\DG}[1][]{\ensuremath{\overline{\nabla}#1}}
\renewcommand{\epsilon}{\varepsilon}
\newcommand{\dd}[2]{\ensuremath{\frac{\mathrm{d}#1}{\mathrm{d}#2}}}
\newcommand{\ddt}{\dd{}{t}}
\newcommand{\gradient}{\nabla}
\newcommand{\jacobian}{\mathrm{D}}
\newcommand{\discretejacobian}[1][]{\ensuremath{\overline{\mathrm{D}}#1}}
\DeclarePairedDelimiter{\set}{\{}{\}}
\DeclarePairedDelimiter{\pset}{(}{)}
\DeclarePairedDelimiter{\norm}{\lVert}{\rVert}
\definecolor{color1}{RGB}{230, 159, 0}
\definecolor{color2}{RGB}{86, 180, 233}
\definecolor{color3}{RGB}{204, 121, 167}
\definecolor{color4}{RGB}{0, 158, 115}
\definecolor{color5}{RGB}{0, 114, 178}
\definecolor{color6}{RGB}{213, 94, 0}
\definecolor{color7}{RGB}{240, 228, 66}
\definecolor{colorblack}{RGB}{0, 0, 0}          
\pgfplotsset{compat = newest}
\newlength\figH
\newlength\figW
\NewDocumentCommand{\RMnew}{om}{{\color{blue}\IfNoValueTF{#1}{#2}{\sout{#1}\;#2}}}
\begin{document}

\author{}
\title{Discrete gradient methods for port-Hamiltonian differential-algebraic equations}

\maketitle


\begin{center}
	\begin{minipage}{0.45\textwidth}
		\centerline{\scshape Philipp L. Kinon}
		\medskip
		{\footnotesize
			\centerline{Institute of Mechanics}
			\centerline{Karlsruhe Institute of Technology (KIT), Germany}
			\centerline{philipp.kinon@kit.edu}
		}
	\end{minipage}
	\begin{minipage}{0.45\textwidth}
		\centerline{\scshape Riccardo Morandin}
		\medskip
		{\footnotesize
			\centerline{Institute of Analysis and Numerics}
			\centerline{OvGU Magdeburg, Germany}
			\centerline{riccardo.morandin@ovgu.de}
		}
	\end{minipage}

	\bigskip

	\centerline{\scshape Philipp Schulze}
	\medskip
	{\footnotesize

		\centerline{Institute of Mathematics}
		\centerline{Technische Universit\"at Berlin, Germany}
		\centerline{pschulze@math.tu-berlin.de}
	}
	\bigskip

\end{center}
\begin{abstract}
	\noindent Discrete gradient methods are a powerful tool for the time discretization of dynamical systems, since they are structure-preserving regardless of the form of the total energy. In this work, we discuss the application of discrete gradient methods to the system class of nonlinear port-Hamiltonian differential-algebraic equations - as they emerge from the port- and energy-based modeling of physical systems in various domains.
	We introduce a novel numerical scheme tailored for semi-explicit differential-algebraic equations and further address more general settings using the concepts of discrete gradient pairs and Dirac-dissipative structures.
	Additionally, the behavior under system transformations is investigated and we
	demonstrate that under suitable assumptions port-Hamiltonian differential-algebraic equations admit a representation which consists of a parametrized port-Hamiltonian semi-explicit system and an unstructured equation.
	Finally, we present the application to multibody system dynamics and discuss numerical results to demonstrate the capabilities of our approach.



	\vskip .3truecm

	\noindent 	{\bf Keywords}:
	Port-Hamiltonian systems, differential-algebraic equations, structure-preserving discretization, time integration methods, discrete gradients

	\vskip .3truecm

	\noindent 	{\bf AMS(MOS)}:
	34A09, 65L80, 65P10, 70E55, 93C10

\end{abstract}


\section{Introduction}
\noindent\emph{Port-Hamiltonian} (pH) systems have gained significant importance in various research areas, with a particular focus on the modeling, simulation, and control of dynamical systems \cite{duindam_2009_modeling,schaft_2014_porthamiltonian}. PH systems offer a valuable framework for analyzing complex problems, where the complexity may arise from multi-physical interactions, non-trivial domains, and various nonlinearities. One of the key advantages of the pH representation is its explicit description of power interfaces, known as ports, which facilitate power-preserving interconnections between submodules. Thus, this approach simplifies the modular composition of models, which often leads to the presence of algebraic constraints. Correspondingly,
the governing equations at hand are \emph{differential-algebraic equations} (DAEs), also known as \emph{descriptor systems} in the context of control theory. If the system has in addition a pH structure, we speak of \emph{port-Hamiltonian differential-algebraic equations} (pHDAEs).
A definition for linear time-varying pHDAEs was provided in \cite{beattie_2018_linear} and a full, nonlinear generalization has been provided in \cite{mehrmann_2019_structurepreserving}. An important subclass consists of semi-explicit pHDAEs, see e.g. \cite[Eq.~3.16]{vanderschaft_2013_porthamiltonian}, where local representations of implicit port-Hamiltonian DAEs are discussed.
In \cite{SchM20,vanderschaft_2018_generalized}, the Hamiltonian as a backbone of pH systems is replaced by Lagrangian subspaces or submanifolds to define generalized pHDAEs.

In general, discretizing a structured dynamical system, such as a pH system, can result in the loss of its continuous-time properties, potentially leading to numerical solutions that exhibit unphysical behavior (see, for example, \cite[Ch.~1]{hairer_2006_geometric}). One way to mitigate this issue is by employing a structure-preserving time discretization scheme, as the system's properties are often embedded in the algebraic or geometric structure of the original continuous-time model. Examples of such systems include gradient \cite{HirS74}, Hamiltonian \cite{arnold_1989_mathematical}, and, particularly relevant to this work, pH systems. Structure-preserving time discretization approaches for Hamiltonian systems have been widely studied, with \cite{hairer_2006_geometric} offering a general overview. Notably, the development of structure-preserving discretization methods has been driven by computational mechanics \cite{betsch_2016_structurepreserving,gonzalez_2000_exact,gonzalez_1999_mechanical,kinon_2023_structurepreserving,simo_1992_discrete}, where variational integrators \cite{betsch_2016_structurepreserving,leyendecker_2008_variational,marsden_2001_discrete} represent an important discretization approach within the group of symplectic methods \cite{leimkuhler_1994_symplectic}. An interesting approach is also given by time finite element methods (see, e.g. \cite{betsch_2001_conservation,betsch_2002_conservation,betsch_2000_conservation,EggHS21,MayBetsch2025}). Structure-preserving techniques for other system classes are for example explored in \cite{JueST19,KunM23,Oet18,simoes_2023_discrete}.

Compared to those works, the structure-preserving time discretization of pH systems is still a relatively young field.
When performing numerical integration of pH systems, it is essential to account for the energy exchange through the ports resulting in the presence of a power balance equation.
The following developments have been made in recent years:
\begin{itemize}
	\item In \cite{KotL19}, the authors show that certain collocation methods can achieve an exact power balance at the discrete level, provided that the total energy function, the \emph{Hamiltonian}, is a quadratic function of the state. This result is further extended to descriptor systems in \cite{mehrmann_2019_structurepreserving}.
	\item Structure-preserving discretization approaches based on Petrov–Galerkin projections have been proposed in \cite{EggHS21,GieKT24} and are closely connected to the aforementioned time finite element approaches. Although these methods can provide continuous solutions also between discrete points in time, and one can obtain arbitrarily high convergence rates, they require the numerical approximation of integrals in time. Not being able to integrate these formulas sufficiently accurately can lead to the loss of the desired convergence and conservation properties \cite{betsch_2000_conservation}.
	      Moreover, this numerical quadrature imposes additional numerical costs for the emanating schemes.
	\item In several recent works, e.g.~\cite{BarDFG23,BarDFGM25,MonM25}, the authors consider splitting schemes that separate the energy-conserving and dissipative parts of the dynamics.
	      While this approach can achieve high order convergence and seems quite promising, to the best of our knowledge, it has been so far only applied to linear port-Hamiltonian systems with quadratic Hamiltonian.
	\item Another approach consists in dropping the requirement for an exact time-discrete power balance, while focusing on minimizing its violation, for example by refining adaptively the time grid of the discretization, see e.g.~\cite{BarS25}.
\end{itemize}

\noindent Contrary to these approaches, the present work pursues a \emph{discrete gradient} approach, which achieves exact time-discrete power balances also for non-quadratic Hamiltonians.
Additionally, the implementation of such schemes is comparably simple and straightforward.
While most of the known discrete gradient schemes are restricted to second order convergence rates, there are recent developments to obtain higher accuracy as well (see \cite{eidnes_2022_order} and the references therein).
Another notable work \cite{sato_2019_linear} deals with DAEs with a gradient structure and constant descriptor matrix.

Most of the approaches in the literature for pH systems \cite{aoues_2017_hamiltonian,celledoni2017energy,falaize_2016_passive,FroGLM24,goren-sumer_2008_gradient,kinon_2023_discrete,MorMMN19}, which achieve an exact power balance at the discrete level for general Hamiltonians, share the characteristic that they focus on \emph{pH ordinary-differential equations}, where the gradient of the Hamiltonian explicitly appears in the system equations. A challenge with applying methods like discrete gradient techniques to more general systems as introduced in \cite{mehrmann_2019_structurepreserving} lies in the fact that the gradient of the Hamiltonian in general only appears implicitly in the system equations. The development of discrete gradient pairs \cite{schulze_structure_2023} has recently addressed this issue.

In contrast to the works focusing on ordinary differential equations, we want to generalize the application field of discrete gradient methods to pHDAEs with possibly state-dependent descriptor matrices, as introduced in \cite{mehrmann_2019_structurepreserving}.
The primary contributions of this work are outlined in the following:

\begin{enumerate}[label=(\roman*)]
	\item Discussion of discrete gradient pairs for general pHDAEs along with a corresponding time integration approach, see \Cref{sec_version2}.
	\item Development of a tangible discrete gradient method for \emph{semi-explicit} pHDAEs, see \Cref{sec_version1}. This already covers many application problems.
	\item Discussion of a method based on an alternative representation of pHDAEs, see \Cref{sec_version3}.
	\item In-depth analysis of relations between the proposed methods and their behavior under coordinate transformations, see \Cref{sec_connections}.
\end{enumerate}

\smallskip\noindent
The remainder of this work is structured as follows: Preliminary basics are recapitualed in \Cref{sec_Preliminaries}, including the definition of pHDAEs and discrete gradients. In \Cref{sec:semi-explicitPHDAEs} we focus on a certain class of pHDAEs, namely semi-explicit pHDAEs.
We then introduce new methods for the numerical integration of pHDAEs using discrete gradients in \Cref{sec_methods}.
We analyze the relationship between the proposed methods as well as their behavior under system transformations in \Cref{sec_connections}.
\Cref{sec_examples} is entirely devoted to the application of our approaches to multibody systems, including numerical experiments. Conclusions and a brief outlook are given in \Cref{sec_Conclusion}.

\subsection{Notation}

\noindent We denote by $\N$ the positive natural numbers and by $\N_0$ the natural numbers including zero.
With $I_n\in\R^{n,n}$ or simply $I$ we denote the identity matrix and with $0$ the zero matrix or vector. We mostly assume that the dimension should become clear from the context.
For every matrix $A\in\mathbb R^{n,m}$ or vector $v\in\mathbb R^n=\mathbb R^{n,1}$ we denote by $A\transp\in\mathbb R^{m,n}$ and $v\transp\in\mathbb R^{1,n}$ their corresponding transposes. Additionally, we sometimes abbreviate less important, unspecified terms by \lq\lq $\star$\rq\rq, to enhance the reader's focus.

We denote by $\cont(X,Y)$ the continuous functions between two topological spaces $X$ and $Y$.
For $k\in\N_0\cup\set{\infty}$ we denote by $\cont^k(\statespace_1,\statespace_2)$ the $k$-times continuously differentiable functions from $\statespace_1$ to $\statespace_2$, where typically $\statespace_1\subseteq\R^n$ and $\statespace_2\subseteq\R^m$ are open subsets for some $n,m\in\N$.

If $f\in\cont^1(\statespace,\R)$ with $\statespace\subseteq\R^n$ open, we denote by $\gradient{f} \in\cont(\statespace,\R^n)$ the gradient of $f$, intended as a column vector function.
If $F\in\cont^1(\statespace,\R^m)$ with $\statespace\subseteq\R^n$ open, we denote by $\jacobian{F} \in\cont(\statespace,\R^{m,n})$ the Jacobian of $F$, intended as a matrix function whose rows transposed are the gradients of the entries of $F$.
Furthermore, given a partition $F=(F_1,\ldots,F_m)$ for the function and $\state=(\state_1,\ldots,\state_r)$ of the state variable, with $\state_i=(\state_{i,1},\ldots,\state_{i,n_i})\in\R^{n_i}$ for $i=1,\ldots,r$, we denote the corresponding partial gradients and partial Jacobians as
\[
	\gradient_{x_i} f =
	\begin{bmatrix}
		\frac{\partial f}{\partial{x_{i,1}}} \\
		\vdots                               \\
		\frac{\partial f}{\partial{x_{i,n_i}}}
	\end{bmatrix}, \qquad
	\jacobian_{x_i} F =
	\begin{bmatrix}
		\frac{\partial F_1}{\partial{x_{i,1}}} & \cdots & \frac{\partial F_1}{\partial{x_{i,n_i}}} \\
		\vdots                                 & \ddots & \vdots                                   \\
		\frac{\partial F_m}{\partial{x_{i,1}}} & \cdots & \frac{\partial F_m}{\partial{x_{i,n_i}}}
	\end{bmatrix},
\]
such that in particular
\begin{equation} \label{def_jacobian}
	\gradient f =
	\begin{bmatrix}
		\gradient_{x_1} f \\ \vdots \\ \gradient_{x_r} f
	\end{bmatrix}, \qquad
	\jacobian F =
	\begin{bmatrix}
		\jacobian_{x_1}F & \cdots & \jacobian_{x_r}F
	\end{bmatrix}.
\end{equation}
\noindent Additionally, the derivative with respect to time $t$ deserves its own notation, which is $\dot x\coloneqq\frac{\d x}{\d t}$.

If $f:\mathcal X\to\mathcal Y$ and $g:\mathcal Y\to\mathcal Z$ are two maps, we denote as usual with $g \circ f:\mathcal X\to\mathcal Z$ their composition, i.e., $g \circ f(x) = g(f(x))$.
When $\discrete{g}:\mathcal Y\times\mathcal Y\to\mathcal Z$, we sometimes abuse the notation and write $\discrete{g} \circ f:\mathcal X\times\mathcal X\to\mathcal Z$ to denote the map $\discrete g \circ f(x,x')=\discrete g(f(x),f(x'))$.

For every matrix function $A\in\cont(\statespace,\R^{m,n})$, we denote by $A\transp\in\cont(\statespace,\R^{n,m})$ its pointwise transpose $A\transp(x)=A(x)\transp$.
If furthermore $m=n$ and $A$ is pointwise invertible, we usually denote by $A^{-1}\in\cont(\statespace,\R^{n,n})$ its pointwise inverse $A^{-1}(\state)=A(\state)^{-1}$, instead of the inverse map, unless otherwise specified.
We also introduce the short notation $A\ntransp$ for $(A^{-1})\transp=(A\transp)^{-1}$.
Given a subset $\mathcal V\subseteq\R^n$, we denote by $\mathrm{span}(\mathcal V)\subseteq\R^n$ the smallest linear subspace of $\R^n$ containing $\mathcal V$, and by
\[
	\mathcal V^\perp = \set{ v \in \R^n \mid v\perp w \text{ for all }w\in\mathcal V }
\]
its orthogonal complement. When $\mathcal V$ consists of only one vector $v\in\R^n$, we simply write $\mathrm{span}(v)$ and $v^\perp$ instead of $\mathrm{span}(\set{v})$ and $\set{v}^\perp$.
Given a subset $\statespace\subseteq\R^{n_1}\times\R^{n_2}$, we usually denote by $\pi_1:\statespace\to\R^{n_1}$ and $\pi_2:\statespace\to\R^{n_2}$ the corresponding orthogonal projections, i.e., $\pi_1(x_1,x_2)=x_1$ and $\pi_2(x_1,x_2)=x_2$ for all $(x_1,x_2)\in\statespace$.

\section{Preliminaries}\label{sec_Preliminaries}

\subsection{Differential-algebraic equations}

\noindent\emph{Differential-algebraic equations} are systems of the form
\begin{equation}\label{eq:DAE_general}
	F\pset*{ t, x, \frac{\textrm{d}x}{\textrm{d}t}, \ldots , \frac{\textrm{d}^kx}{\textrm{d}t^k} } = 0
\end{equation}
for some map $F:\mathcal D_F\to\R^m$, where $t\in \timeinterval \subseteq\R$ denotes the time variable, $x\in \mathcal{X} \subseteq \R^n$ the state variable, and $\mathcal D_F\subseteq\R^{1+(k+1)n}$ is the domain of $F$.
Here $n$ is the dimension of the state variable, $m$ the number of equations, and $k$ is the order of the DAE.
Typically, the domain of $F$ is of the form $\mathcal D_F=\timeinterval\times\statespace\times\R^{kn}$, where $\timeinterval\subseteq\R$ is an open (possibly unbounded) interval and $\statespace\subseteq\R^n$ is an open subset, while the solutions of \eqref{eq:DAE_general} are to be found in $\cont^k(\timeinterval,\statespace)$.

We are particularly interested in first order quasilinear DAEs, i.e., equations of the form
\begin{equation}\label{eq:DAE_quasilinear}
	E(t,x)\dot x = f(t,x),
\end{equation}
see e.g. \cite{rabier_1994_impasse,steinbrecher_2006_numerical}, for some maps $E:\mathcal D_E\to\R^{m,n}$ and $f:\mathcal D_f\to\R^{m}$, where $\mathcal D_E,\mathcal D_f\subseteq\R^{1+n}$.
In particular, if we had $n=m$ and $E$ were pointwise invertible, then \eqref{eq:DAE_quasilinear} would be equivalent to
$
	\dot x = E(t,x)^{-1}f(t,x),
$
which is a system of first order ordinary differential equations (ODEs). However, when this property is not satisfied, the system might include algebraic constraints and be under- or overdetermined.
This presents several challenges, both in the study of the existence and uniqueness of solutions and in the time discretization of the system, see e.g.~\cite{kunkel_2006_differentialalgebraic}.
In particular, dedicated numerical methods are often necessary.

Enriching a DAE with input and output variables $u\in\R^p$ and $y\in\R^q$ we obtain a \emph{descriptor system}
\begin{equation}
	\begin{split}
		E(t,x)\dot x & = f(t,x,u), \\
		y            & = g(t,x,u),
	\end{split}
\end{equation}
for some maps $E:\mathcal D_E\to\R^{m,n}$, $f:\mathcal D_f\to\R^{m}$, and $g:\mathcal D_g\to\R^{q}$, where $\mathcal D_E\subseteq\R^{1+n}$ and $\mathcal D_f,\mathcal D_g\subseteq\R^{1+n+p}$.
In applications, the input $u$ is typically a given fixed time-varying function, a state feedback, or an output feedback.

\subsection{Port-Hamiltonian descriptor systems}

\noindent In this paper we focus on time-invariant port-Hamiltonian descriptor systems.
We introduce first the concept of gradient pair, which will replace the gradient of the Hamiltonian in the equations.

\begin{definition}
	Let $\statespace\subseteq\R^n$ be an open set and let $\hamiltonian\in\cont^1(\statespace,\R)$, $E\in\cont(\statespace,\R^{n,n})$, and $\costate\in\cont(\statespace,\R^n)$.
	We say that $(E,\costate)$ is a \emph{gradient pair} for $\hamiltonian$ if
	\begin{equation}\label{constitutive_equation}
		E(\state)\transp \costate(\state) = \gradient \hamiltonian(\state)
	\end{equation}
	holds for all $\state\in\statespace$.
\end{definition}

\noindent Port-Hamiltonian descriptor systems are then defined as follows.

\begin{definition}[see also \cite{mehrmann_2019_structurepreserving}]\label{def:pHDAE}
	Consider a time interval $\timeinterval=[0,\finaltime]$ with $\finaltime > 0$ and an open state space $\statespace\subseteq\R^n$.
	A \emph{time-invariant port-Hamiltonian descriptor system}, in short \emph{pHDAE}, is a descriptor system of the form
	\begin{equation}\label{eq:pHDAE}
		\begin{split}
			E(x)\dot x & = \pset[\big]{J(x)-R(x)}z(x) + B(x)u, \\
			y          & = B(x)\transp z(x),
		\end{split}
	\end{equation}
	together with a \emph{Hamiltonian} $\hamiltonian\in\cont^1(\statespace,\R)$, where $E,J,R\in\cont(\statespace,\R^{n,n})$, $B\in\cont(\statespace,\R^{n,m})$, and $z\in\cont(\statespace,\R^n)$ satisfy the properties $J(x)=-J(x)\transp$, $R(x)=R(x)\transp\succeq 0$ for all $\state\in\statespace$, and $(E,\costate)$ is a gradient pair for $\hamiltonian$.
	Here $E,J,R$ are called the \emph{descriptor, structure}, and \emph{dissipation} matrix functions, respectively, and $z$ is called the \emph{co-state} function.
\end{definition}

\begin{remark}
	In this work we formally consider only systems without a feedthrough term in the output equation. Nevertheless, our results can be easily adapted for systems with feedthrough, i.e.,~replacing the output equation with $y=C(x)\transp z(x) + D(x)u$ for some matrix functions $C,D$ and requiring some additional dissipative structure involving $R,B,C,D$, see e.g.~definitions in \cite{mehrmann_2019_structurepreserving,mehrmann_2023_control}.
\end{remark}

\begin{remark}
	Although the system from \Cref{def:pHDAE} could emerge from a change of variables of a pH ODE system with state $\hat{x}$ and Jacobian $\jacobian \tilde{x}(x) = E(x)$ inducing $z(x) = \gradient \widetilde{\hamiltonian}(\tilde{x}(x))$, the presented framework additionally covers many more cases.
\end{remark}

\noindent Note that in \Cref{def:pHDAE} the input and output variables, usually taken as functions in $\cont(\timeinterval,\R^m)$, have the same size.
In fact, the product $y\transp u$ typically has the same physical dimension as power.
In particular, one can easily verify (see e.g.~\cite{mehrmann_2019_structurepreserving}) that every pHDAE of the form \eqref{eq:pHDAE} satisfies the \emph{power balance equation} (PBE)
\begin{equation}\label{eq:PBE}
	\ddt \hamiltonian\pset[\big]{x(t)} = -z\pset[\big]{\state(t)}\transp R\pset[\big]{\state(t)}z\pset[\big]{\state(t)} + y(t)\transp u(t)
\end{equation}
and the dissipation inequality
\begin{equation}\label{eq:dissipationInequality}
	\ddt \hamiltonian\pset[\big]{x(t)} \leq y(t)\transp u(t),
\end{equation}
along every solution $(\state,u,y)$ of \eqref{eq:pHDAE}, for all $t\in\timeinterval$.
Note that the PBE and the dissipation inequality can be reinterpreted in integral form as
\begin{equation}\label{eq:PBE_integral}
	\hamiltonian\pset[\big]{x(t_1)} - \hamiltonian\pset[\big]{x(t_0)} = \int_{t_0}^{t_1}\pset[\big]{-z(t)\transp R(t)z(t) + y(t)\transp u(t)}\textrm{d}t
\end{equation}
and
\begin{equation}\label{eq:dissipationInequality_integral}
	\hamiltonian\pset[\big]{x(t_1)} - \hamiltonian\pset[\big]{x(t_0)} \leq \int_{t_0}^{t_1}y(t)\transp u(t)\,\textrm{d}t
\end{equation}
respectively, for every $t_0,t_1\in\timeinterval,\ t_0\leq t_1$.

Since the PBE and dissipation inequality are fundamental properties satisfied by every pH system, there is much effort in the literature \cite{EggHS21,FroGLM24,GieKT24,kinon_2023_discrete,KotL19,mehrmann_2019_structurepreserving,schulze_structure_2023} in developing time-discretization schemes to preserve them on a discrete level. This is also be the focus of this paper.

\subsection{Discrete gradients}

\noindent Discrete gradients are a popular tool for generating structure-preserving integration methods for dynamical systems \cite{gonzalez_1996_time,hairer_2006_geometric,McLQR99}. A general definition is as follows.

\begin{definition}[Discrete gradients, see \cite{hairer_2006_geometric}] \label{def_discrete_grad}
	Given a function $f\in\cont^1(\statespace,\R)$ with $\statespace\subseteq\R^n$ open, a \emph{discrete gradient} for $f$ is any vector function $\DG f\in\cont(\statespace\times\statespace,\R^n)$ that satisfies the properties
	\begin{enumerate}[label=(\roman*)]
		\item \label{itm:directionality} $\DG f(\state,\state') \transp (\state' - \state) = f(\state') - f(\state)$ for all $x,x'\in\statespace$,
		\item \label{itm:consistency} $\DG f\left(\state,\state\right) = \gradient f\left(\state\right)$ for all $x\in\statespace$,
	\end{enumerate}
	where \ref{itm:directionality} is referred to as \textit{directionality} and \ref{itm:consistency} as \textit{consistency} condition.
\end{definition}

\noindent Especially the directionality property will be handy later on for the design of structure-preserving discretizations.
The following definition provides an example for a discrete gradient,
which can yield a symmetric method of second order accuracy, as it represents a second-order approximation to the exact gradients.

\begin{definition}[Gonzalez discrete gradient, see \cite{gonzalez_1996_time}]
	For a given function $f \in \cont^1(\statespace, \R) $ with $\statespace\subseteq\mathbb R^n$ convex open subset, its Gonzalez (or midpoint) discrete gradient $\DG f \in \cont(\statespace\times\statespace, \R^n) $ is defined by
	\begin{equation} \label{eq:DD-Gonzalez}
		\DG f(\state,\state') =
		\begin{dcases}
			\gradient f\left(\tfrac{\state + \state'}{2}\right) + \frac{f(\state') - f(\state) -  \gradient f \left(
			\tfrac{\state + \state'}{2}\right) \transp (\state' - \state)  }{ ||\state' - \state||^2 } (\state' - \state) & \quad \text{if} \ \state' \neq \state , \\
			\gradient f\left(\state\right)                                                                                & \quad \text{otherwise} .
		\end{dcases}
	\end{equation}
	Notably, the Gonzales discrete gradient is determined by the directionality condition together with its action on the orthogonal complement $(x'-x)^\perp$, that is,
	$
		\DG f(\state,\state')\transp z = \gradient f(\tfrac{\state+\state'}{2})\transp z
	$
	for all $\state,\state'\in\statespace$ and $z\in(\state'-\state)^\perp$.
\end{definition}
\noindent Note that for the special case of polynomial functions with degree of at most two, the Gonzalez discrete gradient is equivalent to a midpoint evaluation of the analytical gradient.
Next, the concept of discrete gradients may also be generalized to vector-valued functions.

\begin{definition}[Discrete Jacobians, see {\cite[Def.~3.3]{McLQR99}}] \label{def_discrete_jac}
	Given a vector-valued function $F\in\cont^1(\statespace,\R^m)$ with $\statespace\subseteq\R^n$ open, a \emph{discrete Jacobian} for $F$ is any matrix function $\discretejacobian F\in\cont(\statespace\times\statespace,\R^{m,n})$ that satisfies the directionality and consistency properties
	\begin{enumerate}[label=(\roman*)]
		\item $\discretejacobian F(\state,\state') (\state' - \state) = F(\state') - F(\state)$ for all $x,x'\in\statespace$,
		\item $\discretejacobian F\left(\state,\state\right) = \jacobian F\left(\state\right)$ for all $x\in\statespace$.
	\end{enumerate}
\end{definition}
\noindent As pointed out in \cite{McLQR99}, a discrete Jacobian $\discretejacobian F$ may be equivalently characterized by the fact that all of its rows are discrete gradients of the corresponding component functions of $F$.
In a similar notation as in \eqref{def_jacobian}, we write
\begin{equation*}
	\DG F =
	\begin{bmatrix}
		\DG_{x_1} F \\ \vdots \\ \DG_{x_r} F
	\end{bmatrix}, \qquad
	\discretejacobian{F} =
	\begin{bmatrix}
		\discretejacobian_{x_1}F & \cdots & \discretejacobian_{x_r}F
	\end{bmatrix}
\end{equation*}
for partial discrete derivatives and a partition $\state=(\state_1,\ldots,\state_r)$ of the state variable.
In particular, as long as $\statespace\subseteq\R^n$ is convex, we define the \emph{Gonzalez discrete Jacobian} of a differentiable vector field $F\in\cont^1(\statespace,\R^m)$ as
\begin{equation*}\label{eq:GonzalezJacobian}
	\discretejacobian{F}(\state,\state') =
	\begin{dcases}
		\jacobian{F}\pset*{\tfrac{\state+\state'}{2}} + \frac{F(\state')-F(\state)-\jacobian{F}\pset*{\tfrac{\state+\state'}{2}}(\state'-\state)}{\norm{\state'-\state}^2}(\state'-\state)\transp & \text{if }\state'\neq\state, \\
		\jacobian{F}(\state)                                                                                                                                                                      & \text{otherwise,}
	\end{dcases}
\end{equation*}
which is again determined by the directionality condition together with $\discretejacobian F(\state,\state')z=\jacobian F(\tfrac{\state+\state'}{2})z$ for all $\state,\state'\in\statespace$ and $z\in(\state'-\state)^\perp$.

\begin{remark}\label{rem:existence_of_DG}
	For the construction of classical discrete gradients or discrete Jacobians, some assumptions on the state space $\statespace$, like its convexity, are usually necessary.
	However, in general the existence of discrete gradients is actually independent from the structure of $\statespace$.
	For example, replacing $\gradient f(\frac{\state+\state'}{2})$ by $\gradient f(\state)$ or $\gradient f(\state')$ in \eqref{eq:DD-Gonzalez} yields a discrete gradient regardless of the structure of $\statespace$, although its usefulness for discretization is unclear.
\end{remark}

\noindent Let us come back to discrete gradients and observe the following property.

\begin{lemma}\label{lem:specifiedDG}
	Let $f\in\cont^1(\statespace,\R)$ with $\statespace=\statespace_1\times\statespace_2$, where $\statespace_1\subseteq\R^{n_1}$ and $\statespace_2\subseteq\R^{n_2}$ are open and $\statespace_2$ is convex, let us partition $x=(x_1,x_2)\in\R^{n_1}\times\R^{n_2}$, and suppose that $\gradient_{x_2}f=0$ holds everywhere in $\statespace$.
	Then there is $\specified{f}\in\cont^1(\statespace_1,\R)$ such that $\specified{f}(x_1)=f(x_1,x_2)$ and $\gradient\specified{f}(x_1)=\gradient_{x_1}f(x_1,x_2)$ for every $(x_1,x_2)\in\statespace$, or in short $\specified{f}\circ\pi_1=f$ and $\gradient\specified{f}\circ\pi_1=\gradient_{x_1}f$.
	Let now $\DG\specified{f}$ be a discrete gradient for $\specified{f}$ and $\DG f =(\DG\specified{f}\circ\pi_1,0) : \statespace\times\statespace \to \R^{n_1}\times\R^{n_2}$, i.e.,
	\[
		\DG f (x,x') =
		\begin{bmatrix}
			\DG\specified{f}\pset[\big]{x_1,x_1'} \\ 0
		\end{bmatrix}
	\]
	for every $x=(x_1,x_2),x'=(x_1',x_2')\in\statespace$. Then $\DG f$ is a discrete gradient for $f$.
\end{lemma}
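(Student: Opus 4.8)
The plan is to verify the two defining properties of a discrete gradient (Definition \ref{def_discrete_grad}) for the candidate $\DG f = (\DG\specified{f}\circ\pi_1, 0)$, after first establishing the existence of the reduced function $\specified{f}$. For the existence of $\specified{f}$: since $\gradient_{x_2} f = 0$ everywhere on $\statespace_1\times\statespace_2$ and $\statespace_2$ is convex (hence connected), for each fixed $x_1\in\statespace_1$ the map $x_2\mapsto f(x_1,x_2)$ is constant on $\statespace_2$; define $\specified{f}(x_1)$ to be that common value. Differentiability of $\specified{f}$ and the identity $\gradient\specified{f}(x_1)=\gradient_{x_1}f(x_1,x_2)$ then follow because $f$ is $\cont^1$ and $\specified{f}\circ\pi_1 = f$ locally; the chain rule gives $\gradient_{x_1}f(x_1,x_2) = \gradient\specified{f}(x_1)$ for every admissible $x_2$.

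Next I would check directionality. Take $x=(x_1,x_2)$ and $x'=(x_1',x_2')$ in $\statespace$. Then
\[
	\DG f(x,x')\transp (x'-x) = \DG\specified{f}(x_1,x_1')\transp (x_1'-x_1) + 0\transp(x_2'-x_2) = \specified{f}(x_1') - \specified{f}(x_1),
\]
using directionality of $\DG\specified{f}$, and the right-hand side equals $f(x_1',x_2') - f(x_1,x_2) = f(x') - f(x)$ by the defining property $\specified{f}\circ\pi_1 = f$. For consistency, at a diagonal point $x=(x_1,x_2)$ we get
\[
	\DG f(x,x) = \begin{bmatrix} \DG\specified{f}(x_1,x_1) \\ 0 \end{bmatrix} = \begin{bmatrix} \gradient\specified{f}(x_1) \\ 0 \end{bmatrix} = \begin{bmatrix} \gradient_{x_1}f(x_1,x_2) \\ \gradient_{x_2}f(x_1,x_2) \end{bmatrix} = \gradient f(x),
\]
where the third equality uses $\gradient\specified{f}\circ\pi_1 = \gradient_{x_1}f$ together with the hypothesis $\gradient_{x_2}f = 0$. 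Finally, continuity of $\DG f$ on $\statespace\times\statespace$ is immediate since $\DG\specified{f}$ is continuous on $\statespace_1\times\statespace_1$ and $\pi_1$ is continuous, so the composition and the zero block are both continuous.

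None of the steps is a serious obstacle; the only point requiring a little care is the existence and regularity of $\specified{f}$, i.e.\ genuinely using connectedness of $\statespace_2$ to conclude that $f$ is independent of $x_2$ (convexity is more than enough) and then invoking the $\cont^1$ hypothesis to transfer differentiability to $\specified{f}$. Everything after that is a direct unwinding of the definitions, exploiting that the second block of $\DG f$ is identically zero so it contributes nothing to the directionality pairing and matches $\gradient_{x_2}f=0$ on the diagonal.
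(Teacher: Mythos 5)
Your proposal is correct and follows essentially the same route as the paper: establish that $f$ is independent of $x_2$ using the vanishing partial gradient together with convexity (the paper spells this out via a line-segment argument and a fixed reference point $\hat{x}_2$, which also makes the transfer of $\cont^1$-regularity to $\specified{f}$ and the identity $\gradient\specified{f}(x_1)=\gradient_{x_1}f(x_1,x_2)$ fully explicit), and then verify directionality and consistency of $\DG f$ by the same direct computations. No gaps worth flagging; only the differentiability of $\specified{f}$ deserves the slightly more careful wording the paper gives, namely defining $\specified{f}=f(\cdot,\hat{x}_2)$ for a fixed $\hat{x}_2$ before invoking $\specified{f}\circ\pi_1=f$.
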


\begin{proof}
	The interested reader is referred to \Cref{appendix_lemma2_7}
	for some detailed lines showing that there is $\specified{f}\in\cont^1(\statespace_1,\R)$ such that $\specified{f}(x_1)=f(x_1,x_2)$ and $\gradient\specified{f}(x_1)=\gradient_{x_1}f(x_1,x_2)$ for every $(x_1,x_2)\in\statespace$.
	We now show that $\DG f$ is a discrete gradient for $f$. In fact, for every $x=(x_1,x_2),x'=(x_1',x_2')\in\statespace$ it holds that
	\[
		\DG f(x,x) =
		\begin{bmatrix}
			\DG{\specified{f}}(x_1,x_1) \\ 0
		\end{bmatrix}
		=
		\begin{bmatrix}
			\gradient{\specified{f}}(x) \\ 0
		\end{bmatrix}
		=
		\begin{bmatrix}
			\gradient_{x_1}f(x) \\ \gradient_{x_2}f(x)
		\end{bmatrix}
		= \gradient{f}(x)
	\]
	and
	\begin{align*}
		\DG f(x,x')\transp(x'-x)  =
		\begin{bmatrix}
			\DG\specified{f}(x_1,x_1') \\ 0
		\end{bmatrix}\transp
		\begin{bmatrix}
			x_1'-x_1 \\ x_2'-x_2
		\end{bmatrix} & = \DG\specified{f}(x_1,x_1')\transp(x_1'-x_1)                             \\
		                     & = \specified{f}(x_1')- \specified{f}(x_1) = f(x') - f(x). \qedhere
	\end{align*}
\end{proof}

\begin{remark}\label{rem:specify}
	We note that the previous lemma is still true when replacing the assumption that $\statespace$ has the form $\statespace_1\times\statespace_2$ with convex $\statespace_2$ by the weaker assumption that there exist an open set $\widetilde\statespace\subseteq\R^{n_1}\times\R^{n_2}$ and a diffeomorphism $\varphi=(\varphi_1,\varphi_2):\widetilde\statespace\to\statespace$ such that $\pi_2(\widetilde\statespace)$ is convex and $\varphi_1:\pi_1(\widetilde\statespace)\to\pi_1(\statespace)$ is well-defined.
	However, in order to keep the setting simple, in this paper we will focus on the case where $\statespace=\statespace_1\times\statespace_2$ with convex $\statespace_2$, with the awareness that this setting can be extended.
	As we will discuss in \Cref{rem:local}, this assumption is not restrictive, as long as we are comfortable with restricting the state space $\statespace$ to appropriately small open neighborhoods and working locally, which is suitable for the goal of time discretization.
\end{remark}

\noindent Discrete gradients have been applied successfully to the time discretization of pH ODEs, see e.g.~\cite{celledoni2017energy,FroGLM24,kinon_2023_discrete}.
Here, we want to tackle pHDAEs as described in \Cref{def:pHDAE}.
This brings with it the striking challenge that the gradient of the Hamiltonian, which is supposed to be approximated with a discrete gradient, appears only implicitly within the relation \eqref{constitutive_equation} and is not directly part of the DAEs \eqref{eq:pHDAE}, which govern the dynamics of the system. Particularly for singular descriptor matrices, this leads to a non-invertible relation to the co-state function. In this context the recent work \cite{schulze_structure_2023} proposed the notion of \emph{discrete gradient pairs}, which we regard to be helpful throughout the present work.

\begin{definition}[Discrete gradient pair, see \cite{schulze_structure_2023}]
	\label{def:discGradPair}
	Let $(E,\costate)$ be a gradient pair for $\hamiltonian$.
	We call $(\overline{E},\overline{\costate})\in \cont(\statespace\times \statespace,\R^{n,n})\times \cont(\statespace\times \statespace,\R^n)$ a \emph{discrete gradient pair} for $(\hamiltonian,E,\costate)$ if the following conditions are satisfied.
	\begin{enumerate}[label=(\roman*)]
		\item \label{itm:discGradPH4}$\overline{\costate}(\state,\state')\transp\overline{E}(\state,\state')(\state'-\state) = \hamiltonian(\state')-\hamiltonian(\state)$ for all $(\state',\state)\in\statespace\times \statespace$,
		\item \label{itm:discGradPH2}$\overline{E}(\state,\state) = E(\state)$ for all $\state\in\statespace$,
		\item \label{itm:discGradPH3}$\overline{\costate}(\state,\state) = \costate(\state)$ for all $\state\in\statespace$.
	\end{enumerate}
\end{definition}
\noindent These conditions essentially yield that $\discreteE\transp\discretecostate$ is a discrete gradient, see \Cref{def_discrete_grad}.
Property \ref{itm:discGradPH4} can be interpreted as the directionality condition, while conditions \ref{itm:discGradPH2} and \ref{itm:discGradPH3} ensure the consistency condition for this specific discrete gradient.
As it has become obvious from the previous definitions in this section, the property of \emph{consistency} is rather crucial. We therefore make the following statement.
\begin{definition}\label{def:consistency}
	Given two functions $F \in \cont(\statespace,\R^n)$ and $\overline{F} \in \cont(\statespace\times \statespace,\R^n)$, we call $\overline{F}$ a \emph{consistent} approximation or discretization of $F$ if
	\begin{equation}
		\overline{F}(\state,\state) = F(\state) \quad  \text{for all} \ \state\in\statespace .
	\end{equation}
\end{definition}

\begin{remark}\label{rem:local}
	Assuming the convexity of $\statespace$ is in practice not restrictive.
	Since discrete gradients and other consistent approximations are used for time discretization, it can be usually assumed that they will only be evaluated for arbitrarily close $\state,\state'\in\statespace$, up to reducing the time step accordingly.
	Then, for every $\state\in\statespace$ we can restrict them to $\statespace_0\times\statespace_0$, where $\statespace_0$ is an appropriate open neighborhood of $\state$, which can be selected to have even stronger structure, like being a ball for the $\infty$-norm on $\R^n$.
	This choice in particular ensures that $\statespace_0$ is convex and can be written in the form $\statespace_1\times\statespace_2$ for every partition of the state variable $\state=(\state_1,\state_2)$.
\end{remark}

\noindent Having discussed basic notions of pHDAEs and discrete gradients, we stress that a special class of pHDAEs is pivotal in this work, see the upcoming section.

\section{Semi-explicit port-Hamiltonian descriptor systems}
\label{sec:semi-explicitPHDAEs}
\noindent We now specify that the pHDAE under investigation is semi-explicit
. This subclass already covers many applications and will be the
starting point
pivotal
for derivations of corresponding time integration methods.
We start by introducing the related concept of semi-explicit gradient pairs.

\begin{definition}
	\label{def:semi-explicit_gradient_pair}
	Let $(E,\costate)$ be a gradient pair for $\hamiltonian$.
	We say that $(E,\costate)$ is \emph{semi-explicit} if $\statespace=\statespace_1\times\statespace_2$ with $\statespace_1\subseteq\R^{n_1}$ and $\statespace_2\subseteq\R^{n_2}$ open and $\statespace_2$ convex 
	, and $E=\diag(E_{11},0)$ for some pointwise invertible matrix function $E_{11}\in\cont(\statespace,\R^{n_1,n_1})$.
\end{definition}
\noindent Semi-explicit gradient pairs  satisfy the following property.

\begin{lemma}\label{lem:semiExplicitGradientPair}
	Let $(E,\costate)$ be a semi-explicit gradient pair for $\hamiltonian$. Then there exists $\specified\hamiltonian\in\cont^1(\statespace_1,\R)$ such that $\specified{\hamiltonian}\circ\pi_1=\hamiltonian$ and $\gradient\specified\hamiltonian\circ\pi_1=\gradient_{\state_1}\hamiltonian$, i.e.,
	\begin{align}\label{split_hamiltonian}
		\specified{\hamiltonian}(\state_1) =  \hamiltonian(\state_1,\state_2), \qquad
		\gradient{\specified{\hamiltonian}}(\state_1) = \gradient_{\state_1}\hamiltonian(\state_1,\state_2)
	\end{align}
	for all $\state_1\in\statespace_1,\ \state_2\in\statespace_2$.
	In particular, the gradient pair property \eqref{constitutive_equation} is determined by
	\begin{equation}\label{eq:semiExplicitGradientPair}
		\gradient\specified\hamiltonian(\state_1) = E_{11}(\state_1,\state_2)\transp \costate_1(\state_1,\state_2),
	\end{equation}
	for all $\state_1\in\statespace_1,\ \state_2\in\statespace_2$, where $\costate=(\costate_1,\costate_2)$ is the corresponding partition of $\costate$.
\end{lemma}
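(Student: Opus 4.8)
The plan is to exploit the block-diagonal form $E=\diag(E_{11},0)$ to split the gradient pair equation \eqref{constitutive_equation} into two decoupled blocks, to observe that the lower block forces $\gradient_{\state_2}\hamiltonian$ to vanish identically, and then to invoke the existence statement already contained in \Cref{lem:specifiedDG} (the part whose detailed argument is deferred to \Cref{appendix_lemma2_7}) in order to produce the reduced Hamiltonian $\specified\hamiltonian$.

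First I would note that $E=\diag(E_{11},0)$ implies $E(\state)\transp=\diag(E_{11}(\state)\transp,0)$ for every $\state\in\statespace$, so that, writing the partitions $\state=(\state_1,\state_2)$ and $\costate=(\costate_1,\costate_2)$ and using \eqref{def_jacobian} for $\gradient\hamiltonian=(\gradient_{\state_1}\hamiltonian,\gradient_{\state_2}\hamiltonian)$, the gradient pair identity $E(\state)\transp\costate(\state)=\gradient\hamiltonian(\state)$ is equivalent to the pair of equations
\begin{equation*}
	E_{11}(\state_1,\state_2)\transp\costate_1(\state_1,\state_2)=\gradient_{\state_1}\hamiltonian(\state_1,\state_2),\qquad 0=\gradient_{\state_2}\hamiltonian(\state_1,\state_2),
\end{equation*}
for all $(\state_1,\state_2)\in\statespace$. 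Since $(E,\costate)$ is a gradient pair by assumption, the second of these equations holds, i.e.~$\gradient_{\state_2}\hamiltonian=0$ everywhere in $\statespace$.

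Next I would apply the existence part of \Cref{lem:specifiedDG} to $f=\hamiltonian$: as $\statespace=\statespace_1\times\statespace_2$ with $\statespace_2$ convex and $\gradient_{\state_2}\hamiltonian=0$ everywhere, there is $\specified\hamiltonian\in\cont^1(\statespace_1,\R)$ with $\specified\hamiltonian(\state_1)=\hamiltonian(\state_1,\state_2)$ and $\gradient\specified\hamiltonian(\state_1)=\gradient_{\state_1}\hamiltonian(\state_1,\state_2)$ for all $(\state_1,\state_2)\in\statespace$, which is precisely \eqref{split_hamiltonian}. Substituting $\gradient_{\state_1}\hamiltonian(\state_1,\state_2)=\gradient\specified\hamiltonian(\state_1)$ into the first block equation gives \eqref{eq:semiExplicitGradientPair}. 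Conversely, once \eqref{split_hamiltonian} is available, \eqref{eq:semiExplicitGradientPair} together with the identity $\gradient_{\state_2}\hamiltonian=0$ reassembles into \eqref{constitutive_equation}; hence the gradient pair property is indeed determined by \eqref{eq:semiExplicitGradientPair}.

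The only genuine content is the existence of $\specified\hamiltonian$, i.e.~the fact that $\hamiltonian$ is independent of $\state_2$, which relies on the convexity (or at least connectedness of the fibers) of $\statespace_2$; this is exactly the statement already proved as part of \Cref{lem:specifiedDG} in \Cref{appendix_lemma2_7}, so I would cite it rather than reprove it. Everything else is the elementary block computation sketched above, so I do not expect any real obstacle here.
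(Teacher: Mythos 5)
Your proof is correct and follows essentially the same route as the paper: split the gradient pair identity according to the block structure of $E=\diag(E_{11},0)$, deduce $\gradient_{\state_2}\hamiltonian=0$, invoke the existence part of \Cref{lem:specifiedDG} to obtain $\specified\hamiltonian$, and read off \eqref{eq:semiExplicitGradientPair} from the first block equation. The extra remark on the converse direction is a harmless elaboration of what the paper leaves implicit.
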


\begin{proof}
	Due to the structure of $E$, the gradient pair property \eqref{constitutive_equation} can be written as
	$\gradient_{\state_1}\hamiltonian = E_{11}\transp\costate_1,\ \gradient_{\state_2}\hamiltonian = 0$.
	The latter equation implies with \Cref{lem:specifiedDG} that there exists $\hamiltonian_1\in\cont^1(\statespace_1,\R)$ satisfying \eqref{split_hamiltonian}, while the former is immediately reinterpreted as \eqref{eq:semiExplicitGradientPair}.
\end{proof}

\noindent In the context of pHDAEs, we often call $\hamiltonian_1$ the \emph{specified Hamiltonian}.
This motivates the following definition.

\begin{definition}\label{def:semiexp_pHDAE}
	Consider
	a state space $\statespace=\statespace_1\times\statespace_2\subseteq\R^n$ with $\statespace_1\subseteq\R^{n_1}$ open and $\statespace_2\subseteq\R^{n_2}$ open convex, and let us partition the state $\state=(\state_1,\state_2)\in\statespace$ accordingly.
	A \emph{semi-explicit pHDAE} is a port-Hamiltonian descriptor system in the sense of \Cref{def:pHDAE} with $E=\diag(E_{11},0)$, where $E_{11}\in\cont(\statespace,\R^{n_1,n_1})$ is pointwise invertible. In particular it admits the form
	\begin{equation} \label{block_pHDAE}
		\begin{split}
			\begin{bmatrix}
				E_{11}(\state) & 0 \\ 0 & 0
			\end{bmatrix} \begin{bmatrix}
				              \dot{\state}_1 \\ \dot{\state}_2
			              \end{bmatrix} & = \left( J(\state) - R(\state )\right) \begin{bmatrix}
				                                                                     \costate_1(\state) \\ \costate_2(\state)
			                                                                     \end{bmatrix} + B(x) u , \\
			y                                & = B(\state)\transp
			\begin{bmatrix}
				\costate_1(\state) \\ \costate_2(\state)
			\end{bmatrix} ,
		\end{split}
	\end{equation}
	together with a specified Hamiltonian $\hamiltonian_1\in\cont^1(\statespace_1,\R)$ that satisfies the gradient pair property \eqref{eq:semiExplicitGradientPair} and conforms with \Cref{lem:semiExplicitGradientPair}.
\end{definition}

\noindent Note that systems of the form \eqref{block_pHDAE} have also been considered in \cite{morandin_phd_2019}, where the application of partitioned Runge-Kutta schemes for their time discretization was considered.
We now illuminate the abovementioned definition by exploring some examples.

\begin{myex}[Constrained input-output pH systems in classical form]\label{ex_constrained_IO_PHS}
	The above framework naturally includes all systems which are covered by the standard notion of pH systems in \emph{constrained input-output representation} (see e.g. \cite[Eq.~2.154]{duindam_2009_modeling} or \cite[Eq.~4.44]{vanderschaft_2013_porthamiltonian}) described by local coordinates $\tilde{x}$ satisfying
	\begin{equation*}
		\begin{aligned}
			\dot{\tilde x} & = \left( \widetilde{J}(\tilde{x}) - \widetilde{R}(\tilde{x}) \right) \gradient \widetilde{\hamiltonian}(\tilde{x}) + g(\tilde{x}) u + b(\tilde{x}) \lambda, \\
			y              & = g(\tilde{x})\transp \gradient \widetilde{\hamiltonian}(\tilde{x}),                                                                                        \\
			0              & = b(\tilde{x})\transp \gradient \widetilde{\hamiltonian}(\tilde{x}),
		\end{aligned}
	\end{equation*}
	with $\state = (\state_1,\state_2) = (\tilde{x},\lambda)$, $\specified{\hamiltonian}(x_1) = \widetilde{\hamiltonian}(\tilde{x})$, $E_{11} = I$, $z_1= \gradient \widetilde{\hamiltonian}(\tilde{x})$, $z_2 =\lambda$, $B(x)\transp = [g(\tilde{x})\transp, 0]$ and
	\[
		J(x) -R(x) = \begin{bmatrix}
			\widetilde{J}(\tilde{x}) - \widetilde{R}(\tilde{x}) &  & b(\tilde{x}) \\
			-b(\tilde{x})\transp                                &  & 0
		\end{bmatrix} .
		\qedhere
	\]
\end{myex}

\begin{myex}[Nonlinear multibody systems]\label{ex_mbs_shorter}
	It can be shown that the governing equations for nonlinear multibody systems fit well into the above framework of semi-explicit pHDAEs. The equations of motion are given as
	\begin{align*} 
		\begin{bmatrix}
			I & 0 & 0 \\ 0 & M & 0 \\ 0 & 0 & 0
		\end{bmatrix} \begin{bmatrix}
			              \dot{q} \\ \dot{v} \\ \dot{\lambda}
		              \end{bmatrix} & = \left( \begin{bmatrix}
			                                       0  & I                  & 0                      \\
			                                       -I & -R_{\mathrm{R}}(q) & -\jacobian g(q)\transp \\ 0 & \jacobian g(q) & 0
		                                       \end{bmatrix} \right) \begin{bmatrix}
			                                                             \gradient V(q) \\ v \\ \lambda
		                                                             \end{bmatrix} + \begin{bmatrix}
			                                                                             0 \\ I \\ 0
		                                                                             \end{bmatrix} u , \\
		y                                   & = \begin{bmatrix}
			                                        0 & I & 0
		                                        \end{bmatrix} \begin{bmatrix}
			                                                      \gradient V(q) \\ v \\ \lambda
		                                                      \end{bmatrix} .
	\end{align*}
	The Hamiltonian
	\[
		\hamiltonian(\state) = \frac{1}{2} v\transp M v + V(q) = T(v) + V(q)
	\]
	denotes the total energy.
	Verifying that $E\transp\costate(\state) = \gradient \hamiltonian(\state)$ holds true is straightforward. For more details, especially concerning an introduction of the unknowns, see \Cref{modelling_mbs}.
\end{myex}

\begin{myex}[Synchronous machine]\label{ex_synchro}
	Let us consider a synchronous machine, modeled as described e.g.~in \cite{kundur_1994_power}, and interpreted as a pH system like in \cite{fiaz_2013_porthamiltonian}.
	After a change of variables, which is detailed in \Cref{appendix_synchro}, such that we obtain $x=(I,p,\theta) \in \R^8$, the governing equations can be found in a suitable representation
	\begin{subequations}
		\begin{align}\label{eq:syncMachAlt_dyn}
			\begin{bmatrix}
				L(\theta) & 0 & L'(\theta)I \\ 0 & 1 & 0 \\ 0 & 0 & 1
			\end{bmatrix}
			\begin{bmatrix}
				\dot{I} \\ \dot p \\ \dot\theta
			\end{bmatrix}
			 & =
			\begin{bmatrix}
				-R_{s,r} & 0 & 0 \\ 0 & -d & -1 \\ 0 & 1 & 0
			\end{bmatrix}
			\begin{bmatrix}
				I         \\
				J_r^{-1}p \\
				\frac{1}{2}I^\top L'(\theta)I
			\end{bmatrix}
			+
			\begin{bmatrix}
				I_3 & 0 & 0 \\ 0 & e_1 & 0 \\ 0 & 0 & 1 \\ 0 & 0 & 0
			\end{bmatrix}
			\begin{bmatrix}
				V_s \\ V_f \\ \tau
			\end{bmatrix}, \\ \label{eq:syncMachAlt_out}
			\begin{bmatrix}
				I_s \\ I_f \\ \omega
			\end{bmatrix}
			 & =
			\begin{bmatrix}
				I_3 & 0 & 0 & 0 \\ 0 & e_1\transp & 0 & 0 \\ 0 & 0 & 1 & 0
			\end{bmatrix}
			\begin{bmatrix}
				I         \\
				J_r^{-1}p \\
				\frac{1}{2}I^\top L'(\theta)I
			\end{bmatrix}  .
		\end{align}
	\end{subequations}
	Here $e_1\in\R^{3}$ denotes the first vector of the standard basis of $\R^3$,
	$I\in\R^6$ contains the current in the stator and rotor
	, $p \in \R$ represents the angular momentum of the rotor, $\theta \in \R$ the angle of the rotor, and $R_{s,r}\coloneqq\diag(R_s,R_r)\succ 0$, where $R_s,R_r\in\R^{3,3}$ are positive diagonal matrices representing the stator and rotor resistances. Additionally, $d>0$ is the mechanical friction, $V_s,I_s\in\R^3$ are the three-phase stator terminal voltages and currents, $V_f,I_f\in\R$ are the rotor field winding voltage and current, $\tau,\omega\in\R$ are the mechanical torque and angular velocity, $J_r>0$ is the rotational inertia of the rotor, $L:\R\to\R^{6,6}$ is the inductance matrix, usually assumed to be $\cont^\infty$, pointwise symmetric positive definite, and periodic of period $2\pi$, and $L'$ denotes its first derivative.
	Note that $V_s,V_f,\tau$ are interpreted as the input variables of the system, while $I_s,I_f,\omega$ as the corresponding output variables.
	The system is completed by the Hamiltonian
	\begin{equation*} \label{eq_syncMach_Ham}
		\hamiltonian(I,p,\theta) = \frac{1}{2}I\transp L(\theta)I + \frac{1}{2J_r}p^2 ,
	\end{equation*}
	which easily verifies $E(\state)\transp \costate(\state) = \gradient \hamiltonian(\state)$.

	While one might argue that \eqref{eq:syncMachAlt_dyn} is not really a DAE, since $E$ is pointwise invertible, this representation has potential advantages. For example, the inductance matrix $L(\theta)$ does not appear under inversion, unlike in the original example from \Cref{appendix_synchro}.
	Furthermore, synchronous machines can be components in complex interconnected systems, e.g.~in the modeling of power networks, typically resulting in actual DAEs anyway due to the application of Kirchhoff's laws.
\end{myex}
\noindent In the upcoming section we focus on the discretization of pHDAEs - as discussed both in \Cref{def:pHDAE,def:semiexp_pHDAE}.

\section{Structure-preserving time discretization} \label{sec_methods}

\noindent We start by discussing integration methods for general pHDAEs of the form \eqref{eq:pHDAE} in \Cref{sec_version2}.
Here the concept of discrete gradient pairs will be of central importance.
We continue with the discretization of semi-explicit pHDAEs of the form \eqref{block_pHDAE} in \Cref{sec_version1}, yielding a tangible time stepping method. Lastly, we discuss an alternative approach based on a different modeling ansatz, see \Cref{sec_version3}.

\subsection{Discrete gradient pair methods for general pHDAEs} \label{sec_version2}

\noindent Consider a pHDAE of the form \eqref{eq:pHDAE} and a temporal grid $0=t^0<t^1<\ldots<t^\ntimesteps=\finaltime$ with $N$ time intervals of constant time step size $h = t\none - t\n$ for $k=0,\ldots,\ntimesteps-1$. We consider uniform time grids for the sake of brevity
and propose the scheme
\begin{equation} \label{block_pHDAE_timestepping}
	\begin{aligned}
		\discreteE(\discretestate\n,\discretestate\none) (\discretestate\none - \discretestate\n) & = h \pset[\big]{ \discreteJ(\discretestate\n,\discretestate\none)-\discreteR(\discretestate\n,\discretestate\none)} \discretecostate(\discretestate\n,\discretestate\none) + h\discreteB(\discretestate\n,\discretestate\none) \discreteinput , \\
		\discreteoutput                                                                           & = \discreteB(\discretestate\n,\discretestate\none)\transp \discretecostate(\discretestate\n,\discretestate\none) .
	\end{aligned}
\end{equation}
for $k = 0, \ldots, \ntimesteps -1$.

In \eqref{block_pHDAE_timestepping}, we define discrete approximations of the state
$\discretestate\n \approx \state(t\n)$
assuming that also $\state\n \in \statespace$ for sufficiently small time steps. The matrices $\discreteE, \discreteJ, \discreteR, \discreteB$ are arbitrary consistent approximations of the matrix functions (see \Cref{def:consistency}), still satisfying $\discreteJ=-\discreteJ\transp$ and $\discreteR=\discreteR\transp \succeq 0$ pointwise.
Moreover, $\discreteinput$ is not necessarily the evaluation of the (possibly discontinuous) input function at $t\n$, but at some point within the time interval of interest or an average value of it.
Correspondingly, the discrete-time output $\discreteoutput$ is an approximation for $y(t)$  for the whole time step interval.

Most importantly,
we require that $(\overline{E},\overline{\costate})$ is a discrete gradient pair for $(\hamiltonian,E,\costate)$ in the sense of \Cref{def:discGradPair}. Finding such a discrete gradient pair is not trivial, but we will study how to construct one in certain special cases in \Cref{sec_connections}.
For self-containedness of this work, we show that the usage of discrete gradient pairs yields an energy-consistent time integration.
\begin{theorem}\label{theorem_block_energy_general}
	Scheme \eqref{block_pHDAE_timestepping} yields an energy-consistent approximation of the time-continuous power balance \eqref{eq:PBE} given by
	\begin{equation}
		\begin{aligned}
			{\hamiltonian}(\discretestate\none) - {\hamiltonian}(\discretestate\n) & = - h\discretecostate(\discretestate\n,\discretestate\none) \transp \discreteR(\discretestate\n,\discretestate\none)  \discretecostate(\discretestate\n,\discretestate\none)  + h (\discreteoutput)\transp \discreteinput \\
			                                                                       & \leq h (\discreteoutput)\transp \discreteinput .
		\end{aligned}
	\end{equation}
\end{theorem}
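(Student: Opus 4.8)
The plan is to compute the discrete energy difference $\hamiltonian(\discretestate\none) - \hamiltonian(\discretestate\n)$ by exploiting the directionality property \ref{itm:discGradPH4} of the discrete gradient pair, and then to substitute the discrete dynamics \eqref{block_pHDAE_timestepping} into the resulting expression. Concretely, the first step is to write, using property \ref{itm:discGradPH4} of \Cref{def:discGradPair},
\[
	\hamiltonian(\discretestate\none) - \hamiltonian(\discretestate\n) = \discretecostate(\discretestate\n,\discretestate\none)\transp \discreteE(\discretestate\n,\discretestate\none)(\discretestate\none - \discretestate\n).
\]
This is the key structural identity; it plays the exact role that the directionality condition of an ordinary discrete gradient plays in the ODE case, but now routed through the discrete descriptor matrix $\discreteE$.

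The second step is to insert the first equation of the scheme \eqref{block_pHDAE_timestepping}, namely $\discreteE(\discretestate\n,\discretestate\none)(\discretestate\none - \discretestate\n) = h\bigl(\discreteJ - \discreteR\bigr)\discretecostate + h\discreteB\,\discreteinput$ (all discrete quantities evaluated at $(\discretestate\n,\discretestate\none)$), obtaining
\[
	\hamiltonian(\discretestate\none) - \hamiltonian(\discretestate\n) = h\,\discretecostate\transp\bigl(\discreteJ - \discreteR\bigr)\discretecostate + h\,\discretecostate\transp\discreteB\,\discreteinput.
\]
Now the skew-symmetry $\discreteJ = -\discreteJ\transp$ forces $\discretecostate\transp\discreteJ\discretecostate = 0$, which kills the structure-matrix contribution. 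For the input term, the output equation of the scheme gives $\discreteoutput = \discreteB\transp\discretecostate$, hence $\discretecostate\transp\discreteB\,\discreteinput = (\discreteoutput)\transp\discreteinput$. Substituting both yields exactly the claimed discrete power balance
\[
	\hamiltonian(\discretestate\none) - \hamiltonian(\discretestate\n) = -h\,\discretecostate\transp\discreteR\,\discretecostate + h\,(\discreteoutput)\transp\discreteinput.
\]

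The final step is the inequality: since $\discreteR = \discreteR\transp \succeq 0$ pointwise, the quadratic form $\discretecostate\transp\discreteR\,\discretecostate \geq 0$, so dropping it gives $\hamiltonian(\discretestate\none) - \hamiltonian(\discretestate\n) \leq h\,(\discreteoutput)\transp\discreteinput$, the discrete dissipation inequality. I do not expect any genuine obstacle here — the argument is a direct discrete mirror of the continuous computation leading to \eqref{eq:PBE} and \eqref{eq:dissipationInequality}; the only thing that has to be gotten right is that the discrete gradient pair property \ref{itm:discGradPH4} supplies precisely the telescoping identity for $\hamiltonian$ against $\discreteE(\discretestate\n,\discretestate\none)(\discretestate\none - \discretestate\n)$, so that no assumption on $\hamiltonian$ being quadratic (or on $\discreteE$, $\discretecostate$ individually being discrete gradients/Jacobians) is needed. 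It is worth noting explicitly in the proof that the arbitrariness of the consistent approximations $\discreteJ, \discreteR, \discreteB$ does not affect the result, as long as the stated symmetry/definiteness conditions and the discrete gradient pair property hold.
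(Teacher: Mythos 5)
Your proposal is correct and follows essentially the same route as the paper's proof: applying the directionality property of the discrete gradient pair to express $\hamiltonian(\discretestate\none)-\hamiltonian(\discretestate\n)$ via $\discretecostate\transp\discreteE(\discretestate\none-\discretestate\n)$, substituting the scheme, and using the skew-symmetry of $\discreteJ$, the output equation, and $\discreteR\succeq 0$. No gaps.
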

\begin{proof}
	Combining the directionality property \ref{itm:discGradPH4} of the discrete gradient pair with \eqref{block_pHDAE_timestepping} one obtains
	\begin{align*}
		{\hamiltonian}(\discretestate\none) - {\hamiltonian}(\discretestate\n)
		 & = \discretecostate(\discretestate\n,\discretestate\none)\transp \discreteE(\discretestate\n,\discretestate\none)  (\discretestate\none - \discretestate\n)
		\\
		 & = h \discretecostate(\discretestate\n,\discretestate\none)\transp \pset[\big]{\discreteJ(\discretestate\n,\discretestate\none)-\discreteR(\discretestate\n,\discretestate\none)} \discretecostate(\discretestate\n,\discretestate\none) \\
		 &
		\qquad + h \discretecostate(\discretestate\n,\discretestate\none)\transp \discreteB(\discretestate\n,\discretestate\none) \discreteinput                                                                                                   \\
		 &
		= - h \discretecostate(\discretestate\n,\discretestate\none) \transp \discreteR(\discretestate\n,\discretestate\none) \discretecostate(\discretestate\n,\discretestate\none) + h (\discreteoutput)\transp \discreteinput \leq h (\discreteoutput)\transp \discreteinput,
	\end{align*}
	which is the desired result.
\end{proof}

\subsection{Discrete gradient method for semi-explicit pHDAEs}\label{sec_version1}

\noindent Consider now a semi-explicit pHDAE of the form \eqref{block_pHDAE}.
Since the partitioning of the state and the block matrix structure allow for a straightforward approach using discrete gradients, we will obtain a concrete time stepping method in this section.
Particularly, the semi-explicit gradient pair property \eqref{eq:semiExplicitGradientPair}
allows for a direct approximation of $\costate_1$ in terms of the specified Hamiltonian.
Essentially, the proposed method can be written just like equations \eqref{block_pHDAE_timestepping}, which have to be completed by the additional constraint
\begin{equation}\label{block_pHDAE_const_discrete}
	\discreteE_{11}(\state\n,\state\none)\transp \discretecostate_1(\state\n,\state\none) = \DG\specified\hamiltonian(\state\n,\state\none).
\end{equation}
We now choose $\DG\specified\hamiltonian\in\cont(\statespace_1\times\statespace_1,\R^{n_1})$ to be a discrete gradient of the specified Hamiltonian $\specified\hamiltonian$ and $\discreteE_{11}\in\cont(\statespace\times\statespace,\R^{n_1,n_1})$ to be a consistent discretization of $E_{11}$.
This allows to determine uniquely $\discretecostate_1$ as a function of $\state\n,\state\none$, as long $\discreteE_{11}$ is ensured to be invertible within our search scope.

This is for example the case if we choose the midpoint approximation $\discreteE_{11}(\state,\state')\coloneqq E_{11}(\frac{\state+\state'}{2})$, since $E_{11}$ is invertible in the convex space $\statespace$.
More in general, any consistent approximation $\discreteE_{11}$ will be invertible for sufficiently close $\discretestate\n,\discretestate\none$.

While there is in general no guarantee that a discrete matrix function is pointwise invertible, cf.~\Cref{ex:counterexampleForInvertibleDiscreteJacobianOfADiffeomorphism} in the appendix,
we expect to achieve this condition up to refining the time grid sufficiently.
For the sake of simplicity, we introduce the following assumption.
\begin{equation}\label{ass:invertible_discreteE11}\tag{A1}
	\text{$\discreteE_{11}$ is pointwise invertible on $\statespace\times\statespace$}.
\end{equation}
Furthermore, since $\discretecostate=(\discretecostate_1,\discretecostate_2)$ is not given as part of a gradient pair anymore,
we will choose $\discretecostate_2$ as a consistent discretization of the time-continuous function $\costate_2$.

In a more detailed fashion, also highlighting the partitioned state, we rewrite \eqref{block_pHDAE_timestepping} and \eqref{block_pHDAE_const_discrete} combined as
\begin{subequations} \label{block_pHDAE_timestepping_2}
	\begin{align}
		\begin{bmatrix}
			\, \discreteE_{11}(\discretestate\n,\discretestate\none) & 0 \\ 0 & 0
		\end{bmatrix}
		\begin{bmatrix}
			\discretestate_1\none - \discretestate_1\n \\
			\discretestate_2\none - \discretestate_2\n
		\end{bmatrix}
		                                                                                 & = h \pset[\big]{\discreteJ(\discretestate\n, \discretestate\none)-\discreteR(\discretestate\n, \discretestate\none)}
		\begin{bmatrix}
			\discretecostateone \\ \discretecostate_2(\discretestate\n,\discretestate\none)
		\end{bmatrix} \notag                                                                                                                                               \\
		                                                                                 & \qquad + h\discreteB(\discretestate\n, \discretestate\none) \discreteinput ,                                                              \\
		\discreteoutput                                                                  & = \discreteB(\discretestate\n, \discretestate\none)\transp \begin{bmatrix}
			                                                                                                                                              \discretecostateone \\ \discretecostate_2(\discretestate\n,\discretestate\none)
		                                                                                                                                              \end{bmatrix} ,
		\\
		\discreteE_{11}(\discretestate\n, \discretestate\none)\transp\discretecostateone & =  \DG \specified{\hamiltonian}(\discretestate_1\n, \discretestate_1\none) ,  \label{block_pHDAE_timestepping_2_4}
	\end{align}
\end{subequations}
which can be solved for the unknowns $(\discretestate_1\none, \discretestate_2\none, \discretecostateone, \discreteoutput)$ in each time step (assuming that a solution exists).
Note that $\discretecostateone$, which here replaces the uniquely determined function $\discretecostate_1$, is considered as an unknown of the time-discrete system, whereas $\discretecostate_2$ is a
consistent discretization of $\costate_2$.
This scheme extends the discrete gradient method from \cite{kinon_2023_porthamiltonian,kinon_2023_discrete} to semi-explicit pHDAE systems with the specific structure of the descriptor matrix $E$.

\begin{theorem}\label{theorem_block_energy}
	Scheme \eqref{block_pHDAE_timestepping_2} yields an energy-consistent approximation of the time-continuous power balance \eqref{eq:PBE} given by
	\begin{equation}
		\begin{aligned}
			\hamiltonian(\discretestate\none) - \hamiltonian(\discretestate\n) & = - h
			\begin{bmatrix}
				\discretecostateone \\ \discretecostate_2(\discretestate\n,\discretestate\none)
			\end{bmatrix}
			\transp\discreteR(\discretestate\n,\discretestate\none)
			\begin{bmatrix}
				\discretecostateone \\ \discretecostate_2(\discretestate\n,\discretestate\none)
			\end{bmatrix}
			+ h (\discreteoutput)\transp \discreteinput                                \\ & \leq h (\discreteoutput)\transp \discreteinput .
		\end{aligned}
	\end{equation}

\end{theorem}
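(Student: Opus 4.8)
The plan is to recognize scheme \eqref{block_pHDAE_timestepping_2} as a special case of the general scheme \eqref{block_pHDAE_timestepping}, so that \Cref{theorem_block_energy_general} can be invoked verbatim. To this end I set $\discreteE\coloneqq\diag(\discreteE_{11},0)$ and let $\discretecostate=(\discretecostate_1,\discretecostate_2)$, where $\discretecostate_1\in\cont(\statespace\times\statespace,\R^{n_1})$ is the function uniquely determined by \eqref{block_pHDAE_const_discrete} under assumption \eqref{ass:invertible_discreteE11} and $\discretecostate_2$ is the chosen consistent discretization of $\costate_2$. With this identification the first two lines of \eqref{block_pHDAE_timestepping_2} coincide with \eqref{block_pHDAE_timestepping}, and $\discreteJ,\discreteR,\discreteB$ are the prescribed consistent approximations, still satisfying $\discreteJ=-\discreteJ\transp$ and $\discreteR=\discreteR\transp\succeq 0$. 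It therefore suffices to check that $(\discreteE,\discretecostate)$ is a discrete gradient pair for $(\hamiltonian,E,\costate)$ in the sense of \Cref{def:discGradPair}; the asserted discrete power balance and dissipation inequality are then precisely the conclusion of \Cref{theorem_block_energy_general}.

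For the directionality condition I would compute, using the block-zero structure of $\discreteE$ (which annihilates the $\discretecostate_2$ contribution),
\[
	\discretecostate(\discretestate\n,\discretestate\none)\transp\discreteE(\discretestate\n,\discretestate\none)(\discretestate\none-\discretestate\n)
	= \discretecostateone\transp\discreteE_{11}(\discretestate\n,\discretestate\none)(\discretestate_1\none-\discretestate_1\n),
\]
and then substitute the constraint \eqref{block_pHDAE_timestepping_2_4}, i.e.\ $\discreteE_{11}(\discretestate\n,\discretestate\none)\transp\discretecostateone=\DG\specified\hamiltonian(\discretestate_1\n,\discretestate_1\none)$, to rewrite the right-hand side as $\DG\specified\hamiltonian(\discretestate_1\n,\discretestate_1\none)\transp(\discretestate_1\none-\discretestate_1\n)$. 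The directionality property of the discrete gradient $\DG\specified\hamiltonian$ turns this into $\specified\hamiltonian(\discretestate_1\none)-\specified\hamiltonian(\discretestate_1\n)$, and \Cref{lem:semiExplicitGradientPair} (namely $\specified\hamiltonian\circ\pi_1=\hamiltonian$) identifies it with $\hamiltonian(\discretestate\none)-\hamiltonian(\discretestate\n)$, as required. For the consistency conditions, $\discreteE(\state,\state)=\diag(\discreteE_{11}(\state,\state),0)=\diag(E_{11}(\state),0)=E(\state)$ since $\discreteE_{11}$ is a consistent discretization of $E_{11}$; and $\discretecostate_2(\state,\state)=\costate_2(\state)$ by construction, while evaluating \eqref{block_pHDAE_const_discrete} at $\discretestate\n=\discretestate\none=\state$ gives $E_{11}(\state)\transp\discretecostate_1(\state,\state)=\DG\specified\hamiltonian(\state_1,\state_1)=\gradient\specified\hamiltonian(\state_1)=E_{11}(\state)\transp\costate_1(\state)$, by consistency of $\DG\specified\hamiltonian$ and the semi-explicit gradient pair property \eqref{eq:semiExplicitGradientPair}, so that pointwise invertibility of $E_{11}$ yields $\discretecostate_1(\state,\state)=\costate_1(\state)$. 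Hence $\discretecostate(\state,\state)=\costate(\state)$ and $(\discreteE,\discretecostate)$ is a discrete gradient pair.

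I expect no genuine obstacle here; the argument is essentially bookkeeping with the block structure, and the only place demanding a little care is the diagonal consistency of $\discretecostate_1$, where one must simultaneously use the consistency of $\DG\specified\hamiltonian$, the semi-explicit gradient pair property, and the invertibility of $E_{11}$. As an alternative to invoking \Cref{theorem_block_energy_general}, one can reproduce its short proof directly: left-multiply the first block equation of \eqref{block_pHDAE_timestepping_2} by $[\discretecostateone\transp,\ \discretecostate_2(\discretestate\n,\discretestate\none)\transp]$, evaluate the left-hand side as above, drop the $\discreteJ$-term by skew-symmetry, and insert the discrete output equation to obtain the stated identity and inequality.
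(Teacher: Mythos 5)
Your proof is correct, but it takes a genuinely different route from the paper's. The paper proves \Cref{theorem_block_energy} by a direct chain of equalities: it starts from $\hamiltonian(\discretestate\none)-\hamiltonian(\discretestate\n)=\specified{\hamiltonian}(\discretestate_1\none)-\specified{\hamiltonian}(\discretestate_1\n)$, applies directionality of $\DG\specified\hamiltonian$, substitutes the constraint \eqref{block_pHDAE_timestepping_2_4}, inserts the discrete dynamics, and finishes with skew-symmetry of $\discreteJ$ and the output equation --- i.e.\ exactly the ``alternative'' you sketch in your last sentences. You instead reduce \eqref{block_pHDAE_timestepping_2} to the general scheme \eqref{block_pHDAE_timestepping} by packaging $\discreteE=\diag(\discreteE_{11},0)$ and $\discretecostate=(\discreteE_{11}\ntransp(\DG\specified\hamiltonian\circ\pi_1),\discretecostate_2)$ into a discrete gradient pair in the sense of \Cref{def:discGradPair} and then invoking \Cref{theorem_block_energy_general}; this is precisely the content the paper establishes later as \Cref{thm_link1} and \Cref{cor_link1}, so your argument in effect anticipates \Cref{sec_link1}. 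Your verification of the pair properties (directionality via the constraint, diagonal consistency of $\discretecostate_1$ via consistency of $\DG\specified\hamiltonian$, \eqref{eq:semiExplicitGradientPair}, and invertibility of $E_{11}$) is sound. What each approach buys: yours exposes the structural reason the scheme is energy-consistent and avoids repeating the telescoping computation, at the price of needing assumption \eqref{ass:invertible_discreteE11} to define $\discretecostate_1$ as a (continuous) function on all of $\statespace\times\statespace$; the paper's direct computation is self-contained, and it yields the power balance for any solution of the discrete equations without invoking \eqref{ass:invertible_discreteE11} at that point, since the constraint is used only as an identity satisfied by the unknown $\discretecostateone$.
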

\begin{proof}
	Combining the directionality property of the discrete gradient $\DG\specified{\hamiltonian}$ with \eqref{block_pHDAE_timestepping_2} one obtains
	\begin{align*}
		\hamiltonian(\discretestate\none) - \hamiltonian(\discretestate\n) & =  \specified{\hamiltonian}(\discretestate_1\none) - \specified{\hamiltonian}(\discretestate_1\n) = \DG\specified{\hamiltonian}(\discretestate\n,\discretestate\none) \transp (\discretestate\none-\discretestate\n)                                \\
		                                                                   & = (\discretecostateone)\transp \discreteE_{11}(\discretestate\n,\discretestate\none)  (\discretestate_1\none - \discretestate_1\n)	= \begin{bmatrix}
			                                                                                                                                                                                                          \discretecostateone \\ \discretecostate_2(\discretestate\n,\discretestate\none)
		                                                                                                                                                                                                          \end{bmatrix}\transp \discreteE(\discretestate\n,\discretestate\none) (\discretestate\none - \discretestate\n) \\
		                                                                   & = h\begin{bmatrix}
			                                                                        \discretecostateone \\ \discretecostate_2(\discretestate\n,\discretestate\none)
		                                                                        \end{bmatrix}\transp \pset[\big]{\discreteJ(\discretestate\n,\discretestate\none)-\discreteR(\discretestate\n,\discretestate\none)} \begin{bmatrix}
			                                                                                                                                                                                                            \discretecostateone \\ \discretecostate_2(\discretestate\n,\discretestate\none)  \end{bmatrix}               \\
		                                                                   & \qquad + h \begin{bmatrix} \discretecostateone \\ \discretecostate_2(\discretestate\n,\discretestate\none) \end{bmatrix}\transp \discreteB(\discretestate\n,\discretestate\none) \discreteinput                                                     \\  & = - h \begin{bmatrix}
			\discretecostateone \\ \discretecostate_2(\discretestate\n,\discretestate\none) \end{bmatrix}\transp \discreteR(\discretestate\n,\discretestate\none) \begin{bmatrix}  \discretecostateone \\ \discretecostate_2(\discretestate\n,\discretestate\none)   \end{bmatrix} + h (\discreteoutput)\transp \discreteinput \\
		                                                                   & \leq h (\discreteoutput)\transp \discreteinput,
	\end{align*}
	which is the desired result.
\end{proof}

\noindent The semi-explicit discrete gradient method introduced in this section can of course be applied to constrained input-output pH systems as introduced in \Cref{ex_constrained_IO_PHS}, always achieving the desired exact PBE.
While there is in general no guarantee that the algebraic constraints are satisfied exactly by the discrete solution, specific implementation choices may allow to
meet additional requirements stemming from the particular application problem. This is shown for the example of nonlinear multibody systems in \Cref{sec_mbs_integrator}.

\subsection{A third approach based on the Dirac-dissipative representation}\label{sec_version3}

\noindent Under the provision that a newly introduced quantity $f$ satisfies $f=z(x)$, a pHDAE determined by
\eqref{constitutive_equation} and \eqref{eq:pHDAE}
can be given in terms of its \emph{Dirac-dissipative representation} (shorthand \emph{DDR}-pHDAE, see \cite{morandin_phd_2019}) governed by the equations
\begin{equation}
	\label{eq_DDR}
	\begin{bmatrix}
		\gradient \hamiltonian(x) \\
		0                         \\
		y
	\end{bmatrix}+
	\begin{bmatrix}
		0    &  & -E(x)\transp        &  & 0         \\
		E(x) &  & J(\state)-R(\state) &  & B(\state) \\
		0    &  & -B(\state)\transp   &  & 0
	\end{bmatrix}
	\begin{bmatrix}
		-\dot{\state} \\
		f             \\
		u
	\end{bmatrix} = 0 .
\end{equation}
Now, we discuss a method that
can be linked to
previous works for ODE systems \cite{kinon_2023_porthamiltonian,kinon_2024_generalized,kinon_2023_discrete}. Given a DDR-pHDAE \eqref{eq_DDR}, the DDR-method
governs time-stepping via
	{
		\small\setlength\arraycolsep{2pt}
		\begin{equation}
			\label{eq_DDR_discrete}
			\begin{bmatrix}
				\, \DG \hamiltonian(\discretestate\n, \discretestate\none) \\
				0                                                          \\
				\discreteoutput
			\end{bmatrix}+
			\begin{bmatrix}
				0                                                &  & - \discreteE(\discretestate\n,\discretestate\none)\transp                                         &  & 0                                                \\
				\discreteE(\discretestate\n,\discretestate\none) &  & \discreteJ(\discretestate\n,\discretestate\none)-\discreteR(\discretestate\n,\discretestate\none) &  & \discreteB(\discretestate\n,\discretestate\none) \\
				0                                                &  & -\discreteB(\discretestate\n,\discretestate\none) \transp                                         &  & 0
			\end{bmatrix}
			\begin{bmatrix}
				-\frac{1}{h}(\discretestate\none - \discretestate\n) \\
				\discretetimestep{f}                                 \\
				\discreteinput
			\end{bmatrix} = 0 .
		\end{equation}
	}
or written out
\begin{subequations}
	\begin{equation} \label{pHDAE_timestepping}
		\begin{aligned}
			\discreteE(\discretestate\n,\discretestate\none) (\discretestate\none - \discretestate\n) & = h \pset[\big]{ \discreteJ(\discretestate\n,\discretestate\none)-\discreteR(\discretestate\n,\discretestate\none) }  \discretetimestep{f}     + h\discreteB(\discretestate\n,\discretestate\none) \discreteinput , \\
			\discreteoutput                                                                           & = \discreteB(\discretestate\n,\discretestate\none)\transp  \discretetimestep{f}    ,
		\end{aligned}
	\end{equation}
	as well as
	\begin{align} \label{eqn_colsp}
		\discreteE(\discretestate\n,\discretestate\none) \transp  \discretetimestep{f} & = \DG \hamiltonian(\discretestate\n, \discretestate\none) .
	\end{align}
\end{subequations}
Therein, $ \discretetimestep{f}    $ are discrete-time approximations of the co-state quantities.
Additionally, we have borrowed definitions from \Cref{sec_version2} concerning the discrete state and matrices.

\begin{theorem}
	Scheme \eqref{eq_DDR_discrete} yields an energy-consistent approximation of the time-continuous power balance \eqref{eq:PBE} given by
	\begin{equation}\label{eq:discDDR_PBE}
		\hamiltonian(x\none) - \hamiltonian(x\n)
		= -h(\discretetimestep{f})^\top\discreteR(x\n,x\none)\discretetimestep{f} + h(\discreteoutput)^\top\discreteinput
		\leq h(\discreteoutput)^\top\discreteinput.
	\end{equation}
\end{theorem}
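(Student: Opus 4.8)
The plan is to follow the same three-line template used in the proofs of \Cref{theorem_block_energy_general,theorem_block_energy}, since the Dirac--dissipative scheme \eqref{eq_DDR_discrete} has exactly the same algebraic backbone once its block rows are read off. First I would unpack \eqref{eq_DDR_discrete} into its three stacked rows, observing that the middle row is precisely the dynamic equation \eqref{pHDAE_timestepping} and that the top row is the column-space relation \eqref{eqn_colsp}, while the bottom row is the discrete output equation; in particular, $\discretetimestep{f}$ here plays the role of the full discrete co-state, and unlike in \Cref{sec_version1} it is not split into blocks.

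The computation then proceeds as follows. Starting from the directionality property of the discrete gradient $\DG\hamiltonian$ of $\hamiltonian$, one has
\[
	\hamiltonian(x\none) - \hamiltonian(x\n) = \DG\hamiltonian(\discretestate\n,\discretestate\none)\transp(\discretestate\none - \discretestate\n).
\]
Substituting \eqref{eqn_colsp}, i.e.\ $\DG\hamiltonian(\discretestate\n,\discretestate\none) = \discreteE(\discretestate\n,\discretestate\none)\transp\discretetimestep{f}$, turns this into $(\discretetimestep{f})\transp\discreteE(\discretestate\n,\discretestate\none)(\discretestate\none-\discretestate\n)$. Next I would insert the discrete dynamics \eqref{pHDAE_timestepping}, replacing $\discreteE(\discretestate\n,\discretestate\none)(\discretestate\none-\discretestate\n)$ by $h\pset[\big]{\discreteJ(\discretestate\n,\discretestate\none)-\discreteR(\discretestate\n,\discretestate\none)}\discretetimestep{f} + h\discreteB(\discretestate\n,\discretestate\none)\discreteinput$, to obtain
\[
	\hamiltonian(x\none) - \hamiltonian(x\n) = h(\discretetimestep{f})\transp\pset[\big]{\discreteJ(\discretestate\n,\discretestate\none)-\discreteR(\discretestate\n,\discretestate\none)}\discretetimestep{f} + h(\discretetimestep{f})\transp\discreteB(\discretestate\n,\discretestate\none)\discreteinput.
\]
The skew-symmetry $\discreteJ=-\discreteJ\transp$ makes the $\discreteJ$-term vanish, and the output relation $\discreteoutput=\discreteB(\discretestate\n,\discretestate\none)\transp\discretetimestep{f}$ rewrites the last term as $h(\discreteoutput)\transp\discreteinput$, yielding the equality in \eqref{eq:discDDR_PBE}; the inequality is then immediate from $\discreteR=\discreteR\transp\succeq 0$.

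I do not expect a genuine obstacle here: the only care needed is the bookkeeping of matching the block rows of \eqref{eq_DDR_discrete} to \eqref{pHDAE_timestepping}--\eqref{eqn_colsp} and noting that the argument uses only the directionality of $\DG\hamiltonian$, the skew-symmetry of $\discreteJ$, and the positive semidefiniteness of $\discreteR$ — in particular no invertibility hypothesis on $\discreteE$ (or on any block of it) is required, so the result holds under exactly the assumptions already imposed when defining the scheme.
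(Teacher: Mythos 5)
Your proposal is correct and relies on exactly the same ingredients as the paper's argument — directionality of $\DG\hamiltonian$, the relation \eqref{eqn_colsp}, skew-symmetry of $\discreteJ$, the discrete output row, and $\discreteR\succeq 0$ pointwise — the paper merely compresses your row-by-row substitution into a single left-multiplication of \eqref{eq_DDR_discrete} by $[-\tfrac{1}{h}(x\none-x\n)\transp,\ (\discretetimestep{f})\transp,\ (\discreteinput)\transp]$, which carries out the same cancellations at once. Your closing remark that no invertibility assumption on $\discreteE$ (or any of its blocks) is needed is also accurate and consistent with the paper.
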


\begin{proof}
	The equation is obtained by left-multiplying \eqref{eq_DDR_discrete} with $[-\frac{1}{h}(x\none-x\n)^\top, (\discretetimestep{f})^\top, (\discreteinput)^\top]^\top$.
	The inequality immediately follows from $\discreteR\succeq 0$ holding pointwise.
\end{proof}

\noindent It is in general unclear whether these equations can be solved simultaneously for the unknowns $(\discretestate\none$, $\discretetimestep{f}$, $\discreteoutput)$ in each time step.
While for pointwise invertible $\discreteE$ one can at least recover $\discretetimestep{f}$ as a function of $\discretestate\none$, and rewrite the discrete system only in terms of $\state\n$ and $\state\none$,
for an arbitrary, non-invertible $\discreteE$ further
analysis is
required.
In particular, \eqref{pHDAE_timestepping} could be underdetermined even if the original DAE was regular, thus it might be necessary to introduce additional constraints. We show this with an example.

\begin{myex}\label{exm:DDR}
	Consider the regular linear semi-explicit pHDAE of index 1 given by
	\begin{equation}\label{eq:exm:DDR:cont}
		\begin{bmatrix}
			1 & 0 \\ 0 & 0
		\end{bmatrix}
		\begin{bmatrix}
			\dot x_1 \\ \dot x_2
		\end{bmatrix}
		=
		\begin{bmatrix}
			0 & 1 \\ -1 & -1
		\end{bmatrix}
		\begin{bmatrix}
			x_1 \\ x_2
		\end{bmatrix},
	\end{equation}
	together with its DDR \eqref{eq_DDR}, which reads
	\begin{equation}\label{eq:exm:DDR:contDDR}
		\begin{bmatrix}
			x_1 \\ 0 \\ 0 \\ 0
		\end{bmatrix}
		+
		\begin{bmatrix}
			0 & 0 & -1 & 0 \\ 0 & 0 & 0 & 0 \\ 1 & 0 & 0 & 1 \\ 0 & 0 & -1 & -1
		\end{bmatrix}
		\begin{bmatrix}
			-\dot x_1 \\ -\dot x_2 \\ f_1 \\ f_2
		\end{bmatrix}
		= 0
	\end{equation}
	with $f=\costate(\state)$. The corresponding Hamiltonian is given by $\hamiltonian(\state) = \specified{\hamiltonian}(\state_1) = \frac{1}{2} \state_1^2$.
	Discretizing \eqref{eq:exm:DDR:contDDR} with the Gonzalez discrete gradient and discarding the trivial parts of the equation yields
	\begin{equation}\label{eq:exm:DDR:discDDR}
		\discretetimestep{f_1} = -\discretetimestep{f_2} = \frac{\discretestate_1\n + \discretestate_1\none}{2}, \qquad
		\discretestate_1\none = \discretestate_1\n - h \frac{\discretestate_1\n + \discretestate_1\none}{2},
	\end{equation}
	which is equivalent to reducing \eqref{eq:exm:DDR:cont} to $\dot\state_1=-\state_1$ and solving this subsystem with the implicit midpoint method.
	However, $\discretestate_2$ remains undetermined, since the DDR-method discarded the connection between $f_2$ and $\state_2$.

	One possible solution is to use our original knowledge from \eqref{eq:exm:DDR:cont} and define $\discretestate_2\none=-\discretestate_1\none$, conforming with the algebraic condition $\state_1+\state_2=0$.
	Another possibility would be to observe that $(f_1,f_2)=\costate(\state)=(\state_1,\state_2)$ and define $\discretestate_2\none=\discretetimestep{f_2}=-\frac{1}{2}(\discretestate_1\n+\discretestate_1\none)$.
	Note that both these ideas are based on a priori knowledge of the equation structure.

	A more robust and generalizable approach would be to select a consistent discretization $\discretecostate$ for $\costate$. Since imposing $\discretetimestep{f}=\discretecostate(\discretestate\n,\discretestate\none)$ might make the system overdetermined, we choose $(\discretestate\none,\discretetimestep{f})$ instead so that it satisfies \eqref{eq:exm:DDR:discDDR} while minimizing $\norm{\discretetimestep{f}-\discretecostate(\discretestate\n,\discretestate\none)}$.
	We investigate three different choices for $\discretecostate$:
	\begin{enumerate}[label=(\roman*)]
		\item If $\discretecostate(\discretestate\n,\discretestate\none)=\discretestate\none$, then we obtain again $\discretestate_2\none=\discretetimestep{f_2}=-\frac{1}{2}(\discretestate_1\n+\discretestate_1\none)$ for all $k\geq 0$.
		\item If $\discretecostate(\discretestate\n,\discretestate\none)=\discretestate\n$, then $\discretestate_2\none$ does not appear in $\norm{\discretetimestep{f}-\discretecostate(\discretestate\n,\discretestate\none)}$. However, since $\discretestate_2\n$ appears, we obtain $\discretestate_2\n=\discretetimestep{f_2}=-\frac{1}{2}(\discretestate_1\n+\discretestate_1\none)$ for $1\leq k\leq N-1$ (and additionally $k=0$ if we allow to redefine $\state_2^0$).
		      Note that in this case $\state_2\n$ is to be computed after $\state_1\none$, since the iteration defining $\state_1\none$ is independent of $\state_2\n$. However, the final state $\state_2^N$ remains undefined. In fact, this definition suggests that $\state_2\n$ actually approximates $\state_2(t\n+\frac{h}{2})$ instead of $\state_2(t\n)$, thus justifying redefining $\state_2^0$ and stopping at $\state_2^{N-1}$.
		\item If $\discretecostate(\discretestate\n,\discretestate\none)=\frac{1}{2}(\discretestate\n+\discretestate\none)$, then we obtain $\discretestate_2\none=-\discretestate_1\n-\discretestate_1\none-\discretestate_2\n$. If the initial condition $\discretestate^0$
		      satisfies $\discretestate_2^0=-\discretestate_1^0$,
		      then $\discretestate_2\none=-\discretestate_1\none$ holds inductively for all $k\geq 0$.
		      \qedhere
	\end{enumerate}

\end{myex}

\noindent Let us emphasize that the choice of additional constraints does not affect the power balance equation, which remains satisfied by \eqref{pHDAE_timestepping} and \eqref{eqn_colsp}. We also refer to \cite[Ex.~7.4.1]{morandin_phd_2019} where analogous deductions are made in the context of Galerkin projection schemes.

In the case where $E$ is singular, the question arises whether there exists a discrete gradient of $\hamiltonian$, which ensures that $\DG \hamiltonian(x, x')$ is in the column space of $\discreteE(x,x')\transp$, i.e.,
\begin{equation} \label{eqn_colsp2}
	\DG \hamiltonian(x, x')\in\mathrm{colsp}(\discreteE(x,x')\transp),
\end{equation}
for all $x,x'\in\R^n$.
This ensures that \eqref{eqn_colsp} can be solved for $\discretetimestep{f}$, although not necessarily uniquely.
Further details and a corresponding counterexample can be found in \Cref{appendix_counter_example}.
In the next section we focus on the connections between the discrete methods introduced in this section.

\section{Connecting the dots}
\label{sec_connections}

\noindent In the following, we discuss the proposed methods in more detail.
To this end, it is demonstrated in \Cref{sec_link1} that the semi-explicit discrete gradient method from \Cref{sec_version1} is equivalent to special cases of the discrete gradient pair approach from \Cref{sec_version2} and the DDR approach from \Cref{sec_version3}.
Additionally, the behavior of the proposed schemes under system transformations is analyzed in \Cref{subsec_systemTransformation}. In \Cref{sec:existence_semi_explicit} we shed more light on the existence of semi-explicit representations of pHDAEs.

\subsection{Relations between the presented methods in the semi-explicit setting}\label{sec_link1}

\noindent First, we observe that scheme \eqref{block_pHDAE_timestepping_2} for semi-explicit pHDAEs of the form \eqref{block_pHDAE}
corresponds to an underlying discrete gradient pair, see \Cref{def:discGradPair}. This is stated in the following theorem and corollary.

\begin{theorem}\label{thm_link1}
	Let $(E,\costate)$ be a semi-explicit gradient pair for $\hamiltonian$ in the sense of \Cref{def:semi-explicit_gradient_pair} and let $\costate=(\costate_1,\costate_2)$ be split correspondingly.
	Furthermore, let $\discreteE_{11}\in\cont(\statespace\times\statespace,\R^{n_1,n_1})$ and $\discretecostate_2\in\cont(\statespace\times\statespace,\R^{n_2})$ be consistent discretizations of $E_{11}$ and $\costate_2$, respectively, suppose that $\discreteE_{11}$ satisfies the assumption \ref{ass:invertible_discreteE11}, and let $\DG\specified\hamiltonian$ be a discrete gradient for the specified Hamiltonian $\specified\hamiltonian\in\cont^1(\statespace_1,\R)$.
	Then $(\discrete E,\discretecostate)$ with
	\begin{equation}\label{eq:DGP_link1}
		\discreteE = \begin{bmatrix}
			\discreteE_{11} & 0 \\ 0 & 0
		\end{bmatrix}, \qquad
		\discretecostate =
		\begin{bmatrix}
			\discreteE_{11}\ntransp(\DG\specified\hamiltonian\circ\pi_1) \\
			\discretecostate_2
		\end{bmatrix}
	\end{equation}
	is a discrete gradient pair for $(\hamiltonian,E,\costate)$.
\end{theorem}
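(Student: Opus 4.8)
The plan is to verify directly the three defining conditions of a discrete gradient pair from \Cref{def:discGradPair} for the pair $(\discreteE,\discretecostate)$ given in \eqref{eq:DGP_link1}. The two consistency conditions \ref{itm:discGradPH2} and \ref{itm:discGradPH3} should follow quickly from the consistency of $\discreteE_{11}$ and $\discretecostate_2$, combined with the consistency of the discrete gradient $\DG\specified\hamiltonian$ and the semi-explicit gradient pair property \eqref{eq:semiExplicitGradientPair}. The directionality condition \ref{itm:discGradPH4} is the step where the specific block shape of $\discretecostate$ matters: the factor $\discreteE_{11}\ntransp$ — which is well-defined precisely because of assumption \eqref{ass:invertible_discreteE11} — is there to cancel against $\discreteE_{11}$ and reduce the claim to the directionality of $\DG\specified\hamiltonian$. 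I do not expect any genuine obstacle; the only things to watch are the placement of transposes and the fact that $\statespace_2$ plays no role since $\DG\specified\hamiltonian$ is assumed to exist on $\statespace_1\times\statespace_1$.

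First I would check \ref{itm:discGradPH2}: on the diagonal $\discreteE(\state,\state)=\diag(\discreteE_{11}(\state,\state),0)=\diag(E_{11}(\state),0)=E(\state)$, using that $\discreteE_{11}$ is a consistent discretization of $E_{11}$ and that $E=\diag(E_{11},0)$ by \Cref{def:semi-explicit_gradient_pair}.

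Next, for \ref{itm:discGradPH3}, evaluating $\discretecostate$ on the diagonal and using the consistency of $\DG\specified\hamiltonian$, $\discreteE_{11}$ and $\discretecostate_2$, together with the notational convention $\DG\specified\hamiltonian\circ\pi_1$, gives $\discretecostate(\state,\state)=\big(E_{11}(\state)\ntransp\gradient\specified\hamiltonian(\state_1),\,\costate_2(\state)\big)$. The semi-explicit gradient pair property \eqref{eq:semiExplicitGradientPair} states $\gradient\specified\hamiltonian(\state_1)=E_{11}(\state)\transp\costate_1(\state)$, hence $E_{11}(\state)\ntransp\gradient\specified\hamiltonian(\state_1)=\costate_1(\state)$, so $\discretecostate(\state,\state)=(\costate_1(\state),\costate_2(\state))=\costate(\state)$.

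Finally, for the directionality \ref{itm:discGradPH4}, writing $\state=(\state_1,\state_2)$ and $\state'=(\state_1',\state_2')$, I would note that the lower block of $\discreteE(\state,\state')(\state'-\state)$ vanishes, so that
\[
\discretecostate(\state,\state')\transp\discreteE(\state,\state')(\state'-\state)
=\big(\discreteE_{11}(\state,\state')\ntransp\DG\specified\hamiltonian(\state_1,\state_1')\big)\transp\discreteE_{11}(\state,\state')(\state_1'-\state_1)
=\DG\specified\hamiltonian(\state_1,\state_1')\transp(\state_1'-\state_1),
\]
where the last equality uses $(\discreteE_{11}\ntransp)\transp\discreteE_{11}=\discreteE_{11}^{-1}\discreteE_{11}=I$. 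By the directionality of the discrete gradient $\DG\specified\hamiltonian$ this equals $\specified\hamiltonian(\state_1')-\specified\hamiltonian(\state_1)$, which by \Cref{lem:semiExplicitGradientPair} (i.e.\ $\specified\hamiltonian\circ\pi_1=\hamiltonian$) is exactly $\hamiltonian(\state')-\hamiltonian(\state)$. This establishes \ref{itm:discGradPH4} and completes the verification.
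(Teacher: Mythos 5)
Your proposal is correct and follows essentially the same route as the paper's proof: verify the consistency conditions on the diagonal using consistency of $\discreteE_{11}$, $\discretecostate_2$, $\DG\specified\hamiltonian$ together with \eqref{eq:semiExplicitGradientPair}, and establish directionality by cancelling $\discreteE_{11}\ntransp$ against $\discreteE_{11}$ and invoking the directionality of $\DG\specified\hamiltonian$ and $\specified\hamiltonian\circ\pi_1=\hamiltonian$. The only (harmless) difference is that you spell out the verification of condition \ref{itm:discGradPH2}, which the paper dismisses as part of the hypotheses.
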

\begin{proof}
	Property \ref{itm:discGradPH2} in \Cref{def:discGradPair} is part of our hypotheses. We proceed to show that the properties \ref{itm:discGradPH4} and \ref{itm:discGradPH3} are also valid. In fact, it holds that
	\begin{align*}
		\overline z(x,x) & =
		\begin{bmatrix}
			\discreteE_{11}(\state,\state)\ntransp\DG\specified{\hamiltonian}(\state_1,\state_1) \\
			\discretecostate_2(\state,\state)
		\end{bmatrix}
		=
		\begin{bmatrix}
			E_{11}(\state)\ntransp\gradient\specified{\hamiltonian}(\state_1) \\
			\costate_2(\state)
		\end{bmatrix}
		=
		\begin{bmatrix}
			\costate_1(\state) \\ \costate_2(\state)
		\end{bmatrix}
		= \costate(\state)
	\end{align*}
	and
	\begin{align*}
		\overline\costate(x,x')\transp\overline{E}(x,x')(x'-x) & = \overline\costate_1(x,x')\transp\overline{E}_{11}(x,x')(x'_1-x_1) =                                                                                      \\
		                                                       & = \DG\specified\hamiltonian(x_1,x_1')\transp(x_1'-x_1) = \specified\hamiltonian(x_1') - \specified\hamiltonian(x_1) = \hamiltonian(x') - \hamiltonian(x) ,
	\end{align*}
	for all $\state=(\state_1,\state_2),\state'=(\state_1',\state_2')\in\statespace$.
\end{proof}

\begin{corollary}\label{cor_link1}
	Consider a semi-explicit pHDAE of the form \eqref{block_pHDAE}, let $\DG\hamiltonian_1$, $\discreteE_{11}$, $\discretecostate_2$, $\discreteE$, and $\discretecostate$ be defined as in \Cref{thm_link1}, and let us fix consistent discretizations for $J$, $R$, and $B$. Then the semi-explicit discrete gradient method applied with $\DG\hamiltonian_1$ governed by \eqref{block_pHDAE_timestepping_2} and the discrete gradient pair method \eqref{block_pHDAE_timestepping} applied with $(\discreteE,\discretecostate)$ yield the same solution.
\end{corollary}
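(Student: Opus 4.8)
The plan is to show that the two discretization schemes reduce to the same system of equations for the unknowns in each time step. The key observation is that \Cref{thm_link1} already establishes that the pair $(\discreteE,\discretecostate)$ from \eqref{eq:DGP_link1} is a genuine discrete gradient pair, so the discrete gradient pair method \eqref{block_pHDAE_timestepping} is well-defined when applied with this choice. It remains to compare, term by term, the equations produced by the two schemes.

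First I would write out \eqref{block_pHDAE_timestepping} with the specific choices $\discreteE = \diag(\discreteE_{11},0)$ and $\discretecostate = (\discreteE_{11}\ntransp(\DG\specified\hamiltonian\circ\pi_1),\discretecostate_2)$. Because of the block-diagonal structure of $\discreteE$, the left-hand side of the dynamics equation becomes $\discreteE_{11}(\discretestate\n,\discretestate\none)(\discretestate_1\none-\discretestate_1\n)$ in the first block and $0$ in the second, matching exactly the left-hand side of \eqref{block_pHDAE_timestepping_2}. On the right-hand side, the only difference between the two formulations is that scheme \eqref{block_pHDAE_timestepping} uses $\discretecostate_1 = \discreteE_{11}\ntransp(\DG\specified\hamiltonian\circ\pi_1)$ explicitly, whereas scheme \eqref{block_pHDAE_timestepping_2} introduces $\discretecostateone$ as an unknown together with the extra equation \eqref{block_pHDAE_timestepping_2_4}, namely $\discreteE_{11}\transp\discretecostateone = \DG\specified\hamiltonian$. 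Under assumption \ref{ass:invertible_discreteE11}, equation \eqref{block_pHDAE_timestepping_2_4} is uniquely solvable for $\discretecostateone$ and gives precisely $\discretecostateone = \discreteE_{11}\ntransp(\DG\specified\hamiltonian\circ\pi_1)$, the first block of $\discretecostate$. Hence eliminating $\discretecostateone$ from \eqref{block_pHDAE_timestepping_2} recovers \eqref{block_pHDAE_timestepping} with $(\discreteE,\discretecostate)$, and conversely substituting the definition of $\discretecostate_1$ back into \eqref{block_pHDAE_timestepping} and re-introducing it as an unknown recovers \eqref{block_pHDAE_timestepping_2}.

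Since the two schemes share the same consistent discretizations of $J$, $R$, $B$, the remaining equations (the output equation and the dynamics equation in the second block, which is purely algebraic) coincide verbatim. Therefore the system of equations determining $(\discretestate_1\none,\discretestate_2\none,\discreteoutput)$ — and the auxiliary quantity $\discretecostateone$, which in both cases is pinned down by $\discreteE_{11}$ and $\DG\specified\hamiltonian$ — is identical, so both methods produce the same iterates from the same initial data.

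I do not anticipate a serious obstacle here; the statement is essentially a bookkeeping identity between two ways of writing the same discrete system, one with $\discretecostate_1$ eliminated and one with it kept as an auxiliary unknown. The only point requiring care is invoking assumption \ref{ass:invertible_discreteE11} to justify that \eqref{block_pHDAE_timestepping_2_4} determines $\discretecostateone$ uniquely, so that the elimination is legitimate and no spurious solutions are introduced or lost; this is exactly the hypothesis carried over from \Cref{thm_link1}. One should also note that ``yield the same solution'' is understood in the sense that the solution sets of the two discrete systems coincide (including the case of non-uniqueness in a given time step), which follows immediately from the equivalence of the equation systems.
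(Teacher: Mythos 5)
Your argument is correct and matches the paper's own proof, which likewise observes that under assumption \ref{ass:invertible_discreteE11} the auxiliary unknown $\discretecostateone$ determined by \eqref{block_pHDAE_timestepping_2_4} coincides with the first block $\discreteE_{11}\ntransp(\DG\specified\hamiltonian\circ\pi_1)$ of $\discretecostate$, so the two discrete systems are identical by construction. Your write-up is just a more explicit, step-by-step version of the paper's one-line verification.
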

\begin{proof}
	The claim immediately follows by construction, since
	\[
		\discretecostate(\discretestate\n,\discretestate\none) =
		\begin{bmatrix}
			\discreteE_{11}(\discretestate\n,\discretestate\none)\ntransp\DG\specified{\hamiltonian}(\discretestate_1\n,\discretestate_1\none) \\ \discretecostate_2(\discretestate\n,\discretestate\none)
		\end{bmatrix}
		=
		\begin{bmatrix}
			\discretetimestep{\costate_1} \\ \discretecostate_2(\discretestate\n,\discretestate\none)
		\end{bmatrix}. \qedhere
	\]
\end{proof}
\noindent We will now see that the discrete gradient method applied to semi-explicit pHDAEs can be equivalently reinterpreted as a specific DDR-method.
\begin{theorem}
	Under the same assumptions as in \Cref{thm_link1}, the
	semi-explicit discrete gradient method \eqref{block_pHDAE_timestepping_2}
	yields the same one-step method as the DDR-method \eqref{eq_DDR_discrete} with the completing constraint $\discretetimestep{f_2}=\discretecostate_2(\discretestate\n,\discretestate\none)$.
\end{theorem}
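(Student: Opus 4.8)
The plan is to show that, once the DDR-method \eqref{eq_DDR_discrete} is run with the discrete gradient of $\hamiltonian$ induced by $\DG\specified\hamiltonian$ (and with the same consistent discretizations of $E_{11},J,R,B$ and the same $\discretecostate_2$ as in \eqref{block_pHDAE_timestepping_2}) and the completing constraint is added, its equations coincide term by term with those of \eqref{block_pHDAE_timestepping_2}; hence both define the same one-step map on the common set of unknowns $(\discretestate_1\none,\discretestate_2\none,\discretecostateone,\discreteoutput)$.

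First I would make the DDR-method explicit in the semi-explicit setting. By \Cref{lem:semiExplicitGradientPair} the Hamiltonian of \eqref{block_pHDAE} satisfies $\hamiltonian=\specified\hamiltonian\circ\pi_1$ and $\gradient\hamiltonian=(\gradient\specified\hamiltonian\circ\pi_1,0)$; hence, by \Cref{lem:specifiedDG}, the map $\DG\hamiltonian=(\DG\specified\hamiltonian\circ\pi_1,0)$ is an admissible discrete gradient for $\hamiltonian$, and this is the one with which the DDR-method is run. Since $\discreteE=\diag(\discreteE_{11},0)$, the column-space condition \eqref{eqn_colsp2} holds automatically, and equation \eqref{eqn_colsp} splits into a first block $\discreteE_{11}(\discretestate\n,\discretestate\none)\transp\discretetimestep{f_1}=\DG\specified\hamiltonian(\discretestate_1\n,\discretestate_1\none)$ — which, by assumption \ref{ass:invertible_discreteE11}, determines $\discretetimestep{f_1}$ uniquely and is precisely \eqref{block_pHDAE_timestepping_2_4} with $\discretetimestep{f_1}$ in the role of $\discretecostateone$ — together with the trivial second block $0=0$, which leaves $\discretetimestep{f_2}$ entirely free.

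Next I would impose the completing constraint $\discretetimestep{f_2}=\discretecostate_2(\discretestate\n,\discretestate\none)$, so that $\discretetimestep{f}=(\discretetimestep{f_1},\discretetimestep{f_2})$ becomes exactly the discrete co-state vector $(\discretecostateone,\discretecostate_2(\discretestate\n,\discretestate\none))$ appearing in \eqref{block_pHDAE_timestepping_2}. Substituting this and $\discreteE=\diag(\discreteE_{11},0)$ into \eqref{pHDAE_timestepping}, the first block row of the dynamic equation reproduces the first equation of \eqref{block_pHDAE_timestepping_2}, the second block row reproduces the algebraic equation hidden in \eqref{block_pHDAE_timestepping_2} (with $0$ on the left), and $\discreteoutput=\discreteB(\discretestate\n,\discretestate\none)\transp\discretetimestep{f}$ reproduces its output equation. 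Thus, after eliminating $\discretetimestep{f_2}$ via the constraint and identifying $\discretetimestep{f_1}=\discretecostateone$, the DDR system and \eqref{block_pHDAE_timestepping_2} are literally the same system of equations, hence they yield the same one-step method.

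The only step requiring care is the identification of the discrete gradient used in the DDR-method: one must invoke \Cref{lem:specifiedDG} (through \Cref{lem:semiExplicitGradientPair}) to justify that $(\DG\specified\hamiltonian\circ\pi_1,0)$ is a valid discrete gradient of the \emph{full} Hamiltonian, and to recognize that the block structure of $\discreteE$ is exactly what makes \eqref{eqn_colsp} solvable for $\discretetimestep{f_1}$ while leaving $\discretetimestep{f_2}$ undetermined — which is precisely why $\discretetimestep{f_2}=\discretecostate_2(\discretestate\n,\discretestate\none)$ is the natural completing constraint that recovers \eqref{block_pHDAE_timestepping_2}. Everything else is a direct comparison of the two displays block by block.
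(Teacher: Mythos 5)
Your proposal is correct and follows essentially the same route as the paper's proof: both write out the DDR-scheme for the semi-explicit structure with the discrete gradient $(\DG\specified\hamiltonian\circ\pi_1,0)$, identify \eqref{eqn_colsp} with \eqref{block_pHDAE_timestepping_2_4} (with $\discretetimestep{f_1}$ playing the role of $\discretecostateone$), discard the trivial block, and use the completing constraint $\discretetimestep{f_2}=\discretecostate_2(\discretestate\n,\discretestate\none)$ to recover \eqref{block_pHDAE_timestepping_2} verbatim. Your explicit appeal to \Cref{lem:specifiedDG}/\Cref{lem:semiExplicitGradientPair} to justify the choice of $\DG\hamiltonian$ is a nice touch that the paper leaves implicit, but it is not a different argument.
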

\begin{proof}
	Due to the structure of the system, the DDR-method applied to \eqref{block_pHDAE} yields the one-step method
	\begin{equation}\label{eq:DDR_semiexplicit}
		\begin{bmatrix}
			\DG\specified\hamiltonian(\discretestate_1\n,\discretestate_1\none) \\ 0 \\ 0 \\ 0 \\ \discreteoutput
		\end{bmatrix}
		+
		\begin{bmatrix}
			0               & 0 & -\discreteE_{11}\transp         & 0                               & 0            \\
			0               & 0 & 0                               & 0                               & 0            \\
			\discreteE_{11} & 0 & \discreteJ_{11}-\discreteR_{11} & \discreteJ_{12}-\discreteR_{12} & \discreteB_1 \\
			0               & 0 & \discreteJ_{21}-\discreteR_{21} & \discreteJ_{22}-\discreteR_{22} & \discreteB_2 \\
			0               & 0 & -\discreteB_1\transp            & -\discreteB_2\transp            & 0
		\end{bmatrix}
		\begin{bmatrix}
			-\frac{1}{h}(\discretestate_1\none-\discretestate_1\n) \\ -\frac{1}{h}(\discretestate_2\none-\discretestate_2\n) \\ \discretetimestep{f_1} \\ \discretetimestep{f_2} \\ \discreteinput
		\end{bmatrix}
		= 0,
	\end{equation}
	where the arguments $(\discretestate\n,\discretestate\none)$ have been omitted for simplicity.
	In particular, the first equation of \eqref{eq:DDR_semiexplicit} yields
	\[
		\discreteE_{11}(\discretestate\n,\discretestate\none)\transp\discretetimestep{f_1} = \DG\specified{\hamiltonian}(\discretestate\n,\discretestate\none),
	\]
	thus it is equivalent to the equation \eqref{block_pHDAE_timestepping_2_4}, up to replacing $\discretetimestep{f_1}$ with $\discretetimestep{\costate_1}$.
	Then, since the second equation of \eqref{eq:DDR_semiexplicit} is trivial, we can remove it.
	Finally, by replacing $\discretetimestep{f_2}$ with $\discretecostate_2(\discretestate\n,\discretestate\none)$ due to the stated constraint, we get exactly \eqref{block_pHDAE_timestepping_2}.
\end{proof}

\subsection{Behavior of the presented methods under system transformations}\label{subsec_systemTransformation}

\noindent We are now interested in studying how the methods introduced in \Cref{sec_methods} behave under structure-preserving system transformations.
Our first motivation is to construct discrete gradient pairs for general gradient pairs. One possibility would be to transform the general system into an equivalent semi-explicit one, apply \Cref{thm_link1}, and then apply the inverse transformation.
However, this requires to understand more accurately the behavior of discrete gradient pairs under invertible transformations.
Another motivation is to understand whether applying the same methods under different coordinates yields different results.

We start by formalizing what is meant by \emph{structure-preserving system transformations}.
In fact, given a pHDAE of the form \eqref{eq:pHDAE}, a diffeomorphism $\varphi\in\cont^1(\widetilde\statespace,\statespace)$, and a pointwise invertible matrix function $U\in\cont(\widetilde\statespace,\R^{n,n})$, we call the pair $(\varphi,U)$ an \emph{(invertible) system transformation}.
This is motivated by the fact that we can obtain an equivalent system by applying the change of variables $\state=\varphi(\tilde\state)$ and left-multiplication of the first equation of \eqref{eq:pHDAE} by $U(\tilde\state)\transp$.
In fact, this transformation yields the new system
\begin{equation}\label{eq:pHDAE_transf}
	\begin{split}
		\widetilde E(\tilde\state)\dot{\tilde\state} & = \pset[\big]{ \widetilde J(\tilde\state) - \widetilde R(\tilde\state) } \tilde\costate(\tilde\state) + \widetilde{B}(\tilde\state)u, \\
		y                                            & = \widetilde{B}(\tilde\state) \tilde\costate(\tilde\state),
	\end{split}
\end{equation}
where $\widetilde E=U\transp(E\circ\varphi)\jacobian{\varphi}$, $\widetilde J=U\transp(J\circ\varphi)U$, $\widetilde R=U\transp(R\circ\varphi)U$, $\tilde\costate=U^{-1}(\costate\circ\varphi)$, and $\widetilde B=U\transp(B\circ\varphi)$.
Remarkably, $(\widetilde E,\tilde\costate)$ is a gradient pair for $\widetilde\hamiltonian=\hamiltonian\circ\varphi$, and the system \eqref{eq:pHDAE_transf} is a pHDAE with Hamiltonian $\widetilde\hamiltonian$, see \cite[Thm.~1]{mehrmann_2019_structurepreserving} for more details.
Furthermore, we observe that, if $(\varphi,U)$ is an invertible system transformation, then $(\varphi^{-1},U^{-1})$ is also an invertible system transformation.
In particular, applying $(\varphi^{-1},U^{-1})$ to the transformed system \eqref{eq:pHDAE_transf} we obtain again the original system \eqref{eq:pHDAE}.
This motivates calling $(\varphi^{-1},U^{-1})$ the \emph{inverse} of the system transformation $(\varphi,U)$.
We will discuss the composition and inversion of system transformations further in \Cref{app_systemTransformation}.

\begin{remark}
	Note that, if the original system is an ODE and we want to ensure that \eqref{eq:pHDAE_transf} is also an ODE, we need to choose $U=(\jacobian\varphi)\ntransp$, while in the more general case of DAEs this requirement is unnecessary and the choice of $U$ is free.
\end{remark}

\noindent Since $(\widetilde E,\tilde\costate)$ only depends on the gradient pair $(E,\costate)$ and on the system transformation $(\varphi,U)$, we
deduce that system transformations can be applied directly to (discrete) gradient pairs.
This leads to the following result.

\begin{theorem}\label{thm:discreteGradientPairChainRule}
	Let $(E,\costate)$ be a gradient pair for $\hamiltonian$, let $(\varphi,U)$ be a system transformation.
	Then
	\begin{equation}
		(\widetilde E,\tilde\costate) = \pset[\big]{U\transp(E\circ\varphi)\jacobian\varphi , U^{-1}(\costate\circ\varphi)}
	\end{equation}
	is a gradient pair for $\widetilde\hamiltonian=\hamiltonian\circ\varphi$, which we call the \emph{gradient pair transformed from $(E,\costate)$ via $(\varphi,U)$}.
	Furthermore, let $(\discreteE,\discretecostate)$ be a discrete gradient pair for $(\hamiltonian,E,\costate)$, let $\discretejacobian{\varphi}$ be a discrete Jacobian for $\varphi$, and let $\discrete{U} \in \cont(\widetilde\statespace\times\widetilde\statespace,\R^{n,n})$ be a pointwise invertible consistent discretization for $U$.
	Then $(\widehat E,\hat\costate)$ with
	\begin{equation}\label{eq:DGP_chainRule}
		\widehat E = \discrete U\transp(\discreteE\circ\varphi)\discretejacobian\varphi, \qquad
		\hat\costate = \discrete U^{-1}(\discretecostate\circ\varphi)
	\end{equation}
	is a discrete gradient pair for $(\widetilde\hamiltonian,\widetilde{E},\tilde\costate)$.
\end{theorem}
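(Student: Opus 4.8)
The plan is to verify, one by one, the three defining properties of a discrete gradient pair (\Cref{def:discGradPair}) for $(\widehat E,\hat\costate)$ with respect to the transformed data $(\widetilde\hamiltonian,\widetilde E,\tilde\costate)$. The first claim of the theorem, namely that $(\widetilde E,\tilde\costate)$ is a gradient pair for $\widetilde\hamiltonian=\hamiltonian\circ\varphi$, is exactly the statement recalled just before the theorem (see \cite[Thm.~1]{mehrmann_2019_structurepreserving}), so nothing new is needed there; the work concerns only the discrete gradient pair properties of $(\widehat E,\hat\costate)$.

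First I would dispatch the two consistency conditions \ref{itm:discGradPH2} and \ref{itm:discGradPH3}. Evaluating the products in \eqref{eq:DGP_chainRule} on the diagonal $\tilde\state'=\tilde\state$ and using that $\discrete U$ is a consistent discretization of $U$, that $\discretejacobian\varphi(\tilde\state,\tilde\state)=\jacobian\varphi(\tilde\state)$, and that $(\discreteE,\discretecostate)$ satisfies \ref{itm:discGradPH2}--\ref{itm:discGradPH3}, one obtains at once
\[
 \widehat E(\tilde\state,\tilde\state) = U(\tilde\state)\transp E(\varphi(\tilde\state))\jacobian\varphi(\tilde\state) = \widetilde E(\tilde\state), \qquad \hat\costate(\tilde\state,\tilde\state) = U(\tilde\state)^{-1}\costate(\varphi(\tilde\state)) = \tilde\costate(\tilde\state)
\]
for every $\tilde\state\in\widetilde\statespace$, which is precisely what is required.

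The substantive step is the directionality condition \ref{itm:discGradPH4}. Writing $\state=\varphi(\tilde\state)$ and $\state'=\varphi(\tilde\state')$ and inserting the definitions \eqref{eq:DGP_chainRule}, the factor $\discrete U(\tilde\state,\tilde\state')\ntransp$ arising from $\hat\costate\transp$ cancels the factor $\discrete U(\tilde\state,\tilde\state')\transp$ sitting inside $\widehat E$ (here the assumption that $\discrete U$ is pointwise invertible on $\widetilde\statespace\times\widetilde\statespace$ is used), leaving
\[
 \hat\costate(\tilde\state,\tilde\state')\transp\widehat E(\tilde\state,\tilde\state')(\tilde\state'-\tilde\state) = \discretecostate(\state,\state')\transp\discreteE(\state,\state')\,\discretejacobian\varphi(\tilde\state,\tilde\state')(\tilde\state'-\tilde\state).
\]
By the directionality of the discrete Jacobian, $\discretejacobian\varphi(\tilde\state,\tilde\state')(\tilde\state'-\tilde\state)=\varphi(\tilde\state')-\varphi(\tilde\state)=\state'-\state$, so the right-hand side becomes $\discretecostate(\state,\state')\transp\discreteE(\state,\state')(\state'-\state)$, which by property \ref{itm:discGradPH4} for the discrete gradient pair $(\discreteE,\discretecostate)$ equals $\hamiltonian(\state')-\hamiltonian(\state)=\widetilde\hamiltonian(\tilde\state')-\widetilde\hamiltonian(\tilde\state)$, as needed.

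I do not expect any genuine obstacle: the whole argument is a discrete analogue of the chain rule and amounts to careful bookkeeping. The only points that deserve attention are the convention that $\discreteE\circ\varphi$ and $\discretecostate\circ\varphi$ denote composition in both arguments (as fixed in the notation section), and that the cancellation $\discrete U\ntransp\discrete U\transp=I$ is to be applied pointwise on $\widetilde\statespace\times\widetilde\statespace$; both are immediate from the hypotheses.
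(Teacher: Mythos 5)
Your proposal is correct and follows essentially the same route as the paper: consistency on the diagonal, then the cancellation $\discrete U\ntransp\discrete U\transp=I$ combined with the directionality of $\discretejacobian\varphi$ and of $(\discreteE,\discretecostate)$ to get the directionality condition for $(\widehat E,\hat\costate)$. The only cosmetic difference is that the paper also spells out the one-line computation $\widetilde E\transp\tilde\costate=(\jacobian\varphi)\transp(\gradient\hamiltonian\circ\varphi)=\gradient\widetilde\hamiltonian$ for the continuous claim, which you instead defer to the cited reference; this changes nothing of substance.
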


\begin{proof}
	Analogously to what was proven in \cite{mehrmann_2019_structurepreserving} in the case of pHDAEs, we have
	\[
		\widetilde E\transp\tilde\costate
		= (\jacobian\varphi)\transp (E\circ\varphi)\transp U U^{-1} (\costate\circ\varphi)
		= (\jacobian\varphi)\transp (E\transp\costate\circ\varphi)
		= (\jacobian\varphi)\transp (\gradient\hamiltonian\circ\varphi)
		= \gradient\widetilde\hamiltonian,
	\]
	thus $(\widetilde E,\tilde\costate)$ is a gradient pair for $\widetilde\hamiltonian$.
	Concerning the second statement, for every $\tilde\state,\tilde\state'\in\widetilde{\statespace}$ we have
	\begin{align*}
		\hat\costate(\tilde\state,\tilde\state')\transp \widehat{E}(\tilde\state,\tilde\state') (\tilde\state'-\tilde\state)
		 & = \discretecostate\pset[\big]{\varphi(\tilde\state),\varphi(\tilde\state')}\transp \discreteE\pset[\big]{\varphi(\tilde\state),\varphi(\tilde\state')} \discretejacobian{\varphi}(\tilde\state,\tilde\state') (\tilde\state'-\tilde\state) \\
		 & = \discretecostate\pset[\big]{\varphi(\tilde\state),\varphi(\tilde\state')}\transp \discreteE\pset[\big]{\varphi(\tilde\state),\varphi(\tilde\state')} \pset[\big]{ \varphi(\tilde\state') - \varphi(\tilde\state) }                       \\
		 & = \hamiltonian\pset[\big]{\varphi(\tilde\state')} - \hamiltonian\pset[\big]{\varphi(\tilde\state)}
		= \widetilde\hamiltonian(\tilde\state') - \widetilde\hamiltonian(\tilde\state).
	\end{align*}
	It is furthermore clear that $\widehat E(\tilde\state,\tilde\state)=\widetilde E(\tilde\state)$ and $\hat\costate(\tilde\state,\tilde\state)=\tilde\costate(\tilde\state)$ hold for all $\tilde\state\in\widetilde\statespace$.
\end{proof}

\begin{remark}
	Note that, for $(E,\costate)=(I_n,\gradient\hamiltonian)$, $(\discrete{E},\discretecostate)=(I_n,\DG\hamiltonian)$ with $\DG\hamiltonian$ being a discrete gradient of $\hamiltonian$, and $U=\discrete{U}=I_n$, \Cref{thm:discreteGradientPairChainRule} yields that $(\discretejacobian\varphi,\DG\hamiltonian\circ\varphi)$ is a discrete gradient pair for $(\widetilde\hamiltonian,I_n,\gradient\widetilde\hamiltonian)$, and therefore
	\begin{equation}\label{eq:DG_chainRule}
		\DG\widetilde\hamiltonian = (\discretejacobian\varphi)\transp(\DG\hamiltonian\circ\varphi)
	\end{equation}
	is a discrete gradient for $\widetilde\hamiltonian$.
	This is the well-known chain rule for discrete derivatives, see \cite[Prop.~3.4]{McLQR99}.
\end{remark}

\noindent We also deduce from \Cref{thm:discreteGradientPairChainRule} the following result, which fulfills our first motivation mentioned at the beginning of this section.

\begin{corollary}\label{cor:DGP_fromSemiExplicit}
	Let $(E,\costate)$ be a gradient pair for $\hamiltonian$, and suppose that there exists an invertible system transformation $(\varphi,U)$ that maps it into a semi-explicit gradient pair.
	Then $(\hamiltonian,E,\costate)$ admits a discrete gradient pair.
\end{corollary}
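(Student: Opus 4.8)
The plan is to move into the semi-explicit coordinates furnished by the hypothesis, construct a discrete gradient pair there by \Cref{thm_link1}, and then transport it back to the original coordinates by a second application of \Cref{thm:discreteGradientPairChainRule}; in short, transform, solve the construction problem in the semi-explicit representation, and transform back.

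Concretely, let $(\varphi,U)$ be the given invertible system transformation mapping $(E,\costate)$ into a semi-explicit gradient pair. By \Cref{thm:discreteGradientPairChainRule} the transformed pair $(\widetilde E,\tilde\costate)=\pset[\big]{U\transp(E\circ\varphi)\jacobian\varphi,\ U^{-1}(\costate\circ\varphi)}$ is a gradient pair for $\widetilde\hamiltonian=\hamiltonian\circ\varphi$, and by assumption it is semi-explicit in the sense of \Cref{def:semi-explicit_gradient_pair}; in particular \Cref{lem:semiExplicitGradientPair} provides a specified Hamiltonian $\widetilde\hamiltonian_1$. First I would apply \Cref{thm_link1} to $(\widetilde E,\tilde\costate)$: as consistent discretizations I would take the ``left-endpoint'' choices $\overline{\widetilde E}_{11}(\tilde\state,\tilde\state')\coloneqq\widetilde E_{11}(\tilde\state)$ and $\overline{\tilde\costate}_{2}(\tilde\state,\tilde\state')\coloneqq\tilde\costate_2(\tilde\state)$, the former being pointwise invertible so that assumption \ref{ass:invertible_discreteE11} is met, together with any discrete gradient $\DG\widetilde\hamiltonian_1$ of $\widetilde\hamiltonian_1$, which exists by \Cref{rem:existence_of_DG}. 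Then \Cref{thm_link1} yields a discrete gradient pair $(\discreteE^\sharp,\discretecostate^\sharp)$ for $(\widetilde\hamiltonian,\widetilde E,\tilde\costate)$.

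Next I would transport this pair back. Since $(\varphi,U)$ is invertible, so is the inverse system transformation $(\varphi^{-1},U^\star)$ with $U^\star\coloneqq U^{-1}\circ\varphi^{-1}$; it satisfies $\widetilde\hamiltonian\circ\varphi^{-1}=\hamiltonian$, and applying it to the gradient pair $(\widetilde E,\tilde\costate)$ recovers exactly $(E,\costate)$, as recalled before \Cref{thm:discreteGradientPairChainRule} and detailed in \Cref{app_systemTransformation}. I would then invoke \Cref{thm:discreteGradientPairChainRule} a second time, now with $(\widetilde\hamiltonian,\widetilde E,\tilde\costate)$ in the role of $(\hamiltonian,E,\costate)$, the system transformation $(\varphi^{-1},U^\star)$, the discrete gradient pair $(\discreteE^\sharp,\discretecostate^\sharp)$, a discrete Jacobian $\discretejacobian(\varphi^{-1})$ for $\varphi^{-1}$ (existing by \Cref{rem:existence_of_DG}, its rows being discrete gradients of the components of $\varphi^{-1}$), and the pointwise invertible consistent discretization $\overline{U^\star}(\state,\state')\coloneqq U^\star(\state)$. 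This produces a discrete gradient pair, namely $\pset[\big]{(\overline{U^\star})\transp(\discreteE^\sharp\circ\varphi^{-1})\discretejacobian(\varphi^{-1}),\ (\overline{U^\star})^{-1}(\discretecostate^\sharp\circ\varphi^{-1})}$, for the triple $\pset[\big]{\widetilde\hamiltonian\circ\varphi^{-1},\ (U^\star)\transp(\widetilde E\circ\varphi^{-1})\jacobian(\varphi^{-1}),\ (U^\star)^{-1}(\tilde\costate\circ\varphi^{-1})}=(\hamiltonian,E,\costate)$, which is the assertion.

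No step of this is genuinely hard; the argument is an assembly of \Cref{thm_link1} with two applications of \Cref{thm:discreteGradientPairChainRule}. The two points I expect to require care are bookkeeping ones: that the second application of \Cref{thm:discreteGradientPairChainRule} truly returns the triple $(\hamiltonian,E,\costate)$ and not merely an equivalent one -- which relies on the composition and inversion rules for system transformations collected in \Cref{app_systemTransformation} -- and that all auxiliary ingredients (the discrete gradient of $\widetilde\hamiltonian_1$, the discrete Jacobian of $\varphi^{-1}$, and the consistent discretizations of $\widetilde E_{11}$, $\tilde\costate_2$ and $U^\star$) exist on the relevant, not necessarily convex, sets; this is precisely why I would use the left-endpoint constructions of \Cref{rem:existence_of_DG} rather than midpoint-type ones.
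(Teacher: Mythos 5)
Your proposal is correct and follows essentially the same route as the paper: apply \Cref{thm_link1} in the transformed semi-explicit coordinates (using any discrete gradient of the specified Hamiltonian and consistent, pointwise invertible discretizations) and then transport the resulting discrete gradient pair back via \Cref{thm:discreteGradientPairChainRule} applied with the inverse system transformation. Your extra care with the left-endpoint discretizations and with writing the inverse transformation as $(\varphi^{-1},U^{-1}\circ\varphi^{-1})$ matches the paper's appeal to \Cref{rem:existence_of_DG} and its treatment of inverses in \Cref{app_systemTransformation}.
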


\begin{proof}
	In this proof we employ the fact that discrete gradients and discrete Jacobians exist regardless of the structure of the state space, as discussed in \Cref{rem:existence_of_DG}.
	Let $\widetilde\hamiltonian=\hamiltonian\circ\varphi\in\cont^1(\widetilde\statespace,\R)$ be the transformed Hamiltonian, and let $\DG\specified{\widetilde\hamiltonian}$ be any discrete gradient for the specified Hamiltonian $\specified{\widetilde\hamiltonian}$.
	We apply \Cref{thm_link1} to construct a discrete gradient pair $(\widehat E,\hat\costate)$ for $(\widetilde\hamiltonian,\widetilde E,\tilde\costate)$.
	Then, we construct a discrete gradient pair $(\discreteE,\discretecostate)$ for $(\hamiltonian,E,\costate)$ by applying \Cref{thm:discreteGradientPairChainRule} to the discrete gradient pair $(\widehat E,\hat\costate)$ via the inverse system transformation $(\varphi^{-1},U^{-1})$.
	For that, any consistent discretization of $U^{-1}$ (e.g.~the midpoint discretization) and any discrete Jacobian for $\varphi^{-1}$ can be employed.
\end{proof}

\noindent We would now like to understand whether applying the same numerical methods under different coordinate systems yields different results.
We start by studying the discrete gradient pair scheme \eqref{block_pHDAE_timestepping}, in the form of the following theorem.

\begin{theorem}\label{thm:DGP_scheme_invariant}
	Consider a pHDAE of the form \eqref{eq:pHDAE}, let $(\discreteE,\discretecostate)$ be a discrete gradient pair for $(\hamiltonian,E,\costate)$, and let $\discreteJ,\discreteR,\discreteB$ be consistent approximations for $J,R,B$, respectively, such that $\discreteJ=-\discreteJ\transp$ and $\discreteR=\discreteR\transp\succeq 0$ pointwise.
	Let $(\varphi,U)$ be an invertible system transformation, let $\discretejacobian\varphi$ be a discrete Jacobian for $\varphi$, and let $\discrete U$ be a pointwise invertible consistent approximation for $U$.
	Then the discrete gradient pair scheme \eqref{block_pHDAE_timestepping} applied to the original system, with the discrete gradient pair $(\discreteE,\discretecostate)$ and the consistent approximations $\discreteJ,\discreteR,\discreteB$, is equivalent to the same scheme applied to the system transformed via $(\varphi,U)$, with the discrete gradient pair $(\widehat E,\hat\costate)$ defined as in \eqref{eq:DGP_chainRule} and the consistent approximations $\widehat J=\discrete{U}\transp(\discreteJ\circ\varphi)\discrete{U}$, $\widehat R=\discrete{U}\transp(\discreteR\circ\varphi)\discrete{U}$, and $\widehat B=\discrete{U}\transp(\discreteB\circ\varphi)$, up to the change of variables $\varphi$.
\end{theorem}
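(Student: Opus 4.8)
The plan is to verify directly that the discrete system produced by scheme \eqref{block_pHDAE_timestepping} applied to the transformed pHDAE, under the substitution $\discretestate\n = \varphi(\tilde\state\n)$ and $\discretestate\none = \varphi(\tilde\state\none)$, coincides with the original discrete system. Write $(\widetilde E,\tilde\costate)$ for the transformed gradient pair and recall from \Cref{thm:discreteGradientPairChainRule} that $(\widehat E,\hat\costate)$ as in \eqref{eq:DGP_chainRule} is a genuine discrete gradient pair for $(\widetilde\hamiltonian,\widetilde E,\tilde\costate)$, so the transformed scheme is well-posed in the sense required by \Cref{sec_version2}; one should also check that $\widehat J = -\widehat J\transp$ and $\widehat R = \widehat R\transp \succeq 0$ pointwise, which follows immediately from the corresponding properties of $\discreteJ,\discreteR$ together with the congruence by the invertible matrix $\discrete U$.

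The first substantive step is to write out scheme \eqref{block_pHDAE_timestepping} for the transformed system at a fixed step $k$. Its dynamic equation reads
\[
	\widehat E(\tilde\state\n,\tilde\state\none)(\tilde\state\none-\tilde\state\n)
	= h\bigl(\widehat J - \widehat R\bigr)(\tilde\state\n,\tilde\state\none)\,\hat\costate(\tilde\state\n,\tilde\state\none) + h\widehat B(\tilde\state\n,\tilde\state\none)\discreteinput .
\]
Now substitute the definitions: $\widehat E = \discrete U\transp(\discreteE\circ\varphi)\discretejacobian\varphi$, $\widehat J - \widehat R = \discrete U\transp\bigl((\discreteJ-\discreteR)\circ\varphi\bigr)\discrete U$, $\hat\costate = \discrete U^{-1}(\discretecostate\circ\varphi)$, and $\widehat B = \discrete U\transp(\discreteB\circ\varphi)$. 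The factors $\discrete U\transp$ on the left of every term cancel upon left-multiplication by $\discrete U\ntransp$ (here assumption of pointwise invertibility of $\discrete U$ is used), and on the co-state side $\discrete U\,\discrete U^{-1}=I$. The key identity is the directionality property of the discrete Jacobian, namely $\discretejacobian\varphi(\tilde\state\n,\tilde\state\none)(\tilde\state\none-\tilde\state\n) = \varphi(\tilde\state\none)-\varphi(\tilde\state\n) = \discretestate\none-\discretestate\n$. Putting these together, the transformed dynamic equation becomes exactly
\[
	\discreteE(\discretestate\n,\discretestate\none)(\discretestate\none-\discretestate\n)
	= h\bigl(\discreteJ-\discreteR\bigr)(\discretestate\n,\discretestate\none)\,\discretecostate(\discretestate\n,\discretestate\none) + h\discreteB(\discretestate\n,\discretestate\none)\discreteinput ,
\]
which is the original scheme. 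An entirely analogous (shorter) computation with $\widehat B\transp = (\discreteB\circ\varphi)\transp\discrete U$ and $\hat\costate = \discrete U^{-1}(\discretecostate\circ\varphi)$ shows the output equation $\discreteoutput = \widehat B\transp\hat\costate = (\discreteB\transp\discretecostate)\circ(\varphi,\varphi)$ transforms to the original one as well. Hence $(\tilde\state\none,\discreteoutput)$ solves the transformed scheme if and only if $(\varphi(\tilde\state\none),\discreteoutput)$ solves the original one, which is the claimed equivalence up to the change of variables $\varphi$.

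The main obstacle, such as it is, is bookkeeping rather than conceptual: one must be careful that the composition convention $\discreteE\circ\varphi$ means $(\tilde\state,\tilde\state')\mapsto\discreteE(\varphi(\tilde\state),\varphi(\tilde\state'))$ (as fixed in the Notation section) so that all arguments line up, and that the cancellation of $\discrete U$ is legitimate pointwise because $\discrete U$ is assumed pointwise invertible. A secondary point worth a sentence is that the equivalence is only an equivalence of the \emph{systems of equations} defining each step — no uniqueness of solutions is claimed, consistently with the remarks preceding the theorem — so the statement should be read as: the solution sets correspond bijectively via $\varphi$. I would also note in passing that the consistency of $\widehat E,\widehat J,\widehat R,\widehat B$ with $\widetilde E,\widetilde J,\widetilde R,\widetilde B$ on the diagonal, needed for the transformed scheme to fall under the framework of \Cref{sec_version2}, is precisely the last line of the proof of \Cref{thm:discreteGradientPairChainRule} together with the consistency of $\discrete U$.
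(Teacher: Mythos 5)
Your proposal is correct and follows essentially the same route as the paper: verify well-posedness of the transformed scheme (skew-symmetry/positivity of $\widehat J,\widehat R$ and the discrete gradient pair property from \Cref{thm:discreteGradientPairChainRule}), then cancel the $\discrete U$ factors and use the directionality of $\discretejacobian\varphi$ to identify the two discrete systems under $\state = \varphi(\tilde\state)$. The only cosmetic difference is that the paper writes out both directions of the solution correspondence explicitly, whereas you obtain them at once by noting that left-multiplication by the pointwise invertible $\discrete U\transp$ is an equivalence of the defining equations.
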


\begin{proof}
	In this proof we will often omit the arguments $(\state\n,\state\none)$ and $(\tilde\state\n,\tilde\state\none)$, for the sake of readability.
	It is clear that $\widehat J$, $\widehat R$, and $\widehat B$ are consistent approximations for the coefficients $\widetilde J=U\transp(J\circ\varphi)U$, $\widetilde R=U\transp(R\circ\varphi)U$, and $\widetilde B=U\transp(B\circ\varphi)$ of the transformed system, and that $\widehat J=-\widehat J\transp$ and $\widehat R=\widehat R\transp\succeq0$ hold pointwise. Thus, the discrete gradient pair scheme applied on the transformed system is well-defined.

	Let now $\state^0$, $(\state\none,\discreteinput,\discreteoutput)$ for $k=0,\ldots,N-1$ denote a solution of the discrete gradient pair scheme applied to the original system, and let $\tilde\state\n=\varphi^{-1}(\state\n)$ for $k=0,\ldots,N$.
	Then we have
	\begin{align*}
		\widehat E(\tilde\state\none-\tilde\state\n) - h \pset[\big]{ (\widehat J-\widehat R)\hat\costate + \widehat B\discreteinput }
		                                                 & = \discrete{U}\transp\discrete{E}(\discretejacobian\varphi)(\tilde\state\none-\tilde\state\n)                                                                                                 \\
		                                                 & \qquad - h\pset[\big]{ (\discrete{U}\transp\discreteJ\discrete{U}-\discrete{U}\transp\discreteR\discrete{U})\discrete{U}^{-1}\discretecostate + \discrete{U}\transp\discreteB\discreteinput } \\
		                                                 & = \discrete{U}\transp \pset[\Big]{ \discrete{E}(\state\none-\state\n) - h\pset[\big]{(\discreteJ-\discreteR)\discretecostate + \discreteB\discreteinput} } = 0,                               \\
		\discreteoutput - \widehat{B}\transp\hat\costate & = \discreteoutput - \discreteB\transp\discrete{U}\,\discrete{U}^{-1}\discretecostate = \discreteoutput - \discreteB\transp\discretecostate = 0.
	\end{align*}
	Thus, $\tilde\state^0$, $(\tilde\state\none,\discreteinput,\discreteoutput)$ for $k=0,\ldots,N-1$ is a solution of the scheme applied to the transformed system.

	Analogously, if $\tilde\state^0$, $(\tilde\state\none,\discreteinput,\discreteoutput)$ for $k=0,\ldots,N-1$ is a solution of the transformed discrete system, and we define $\state\n=\varphi(\tilde\state\n)$ for $k=0,\ldots,N$, then
	\begin{align*}
		\discreteE(\state\none-\state\n) - h\pset[\big]{(\discreteJ-\discreteR)\discretecostate + \discreteB\discreteinput}
		                                                    & = \discrete{U}\ntransp \pset[\Big] { \discrete{U}\transp\discrete{E}(\discretejacobian\varphi)(\tilde\state\none-\tilde\state\n)                                                                \\
		                                                    & \qquad - h\pset[\big]{ (\discrete{U}\transp\discreteJ\discrete{U}-\discrete{U}\transp\discreteR\discrete{U})\discrete{U}^{-1}\discretecostate + \discrete{U}\transp\discreteB\discreteinput } } \\
		                                                    & = \discrete{U}\ntransp \pset[\Big]{ \widehat E(\tilde\state\none-\tilde\state\n) - h \pset[\big]{ (\widehat J-\widehat R)\hat\costate + \widehat B\discreteinput } } = 0,                       \\
		\discreteoutput - \discreteB\transp\discretecostate & = \discreteoutput - \discreteB\transp\discrete{U}\,\discrete{U}^{-1}\discretecostate = \discreteoutput - \widehat{B}\transp\hat\costate = 0,
	\end{align*}
	such that $\state^0$, $(\state\none,\discreteinput,\discreteoutput)$ for $k=0,\ldots,N-1$ is a solution of the original discrete system.
\end{proof}

\noindent Next, we analyze the semi-explicit discrete gradient scheme \eqref{block_pHDAE_timestepping_2}. For that purpose, we first have to investigate which system transformations preserve the semi-explicit structure.

\begin{proposition}\label{lem:semiExplicitStructurePreserving}
	Let $(E,\costate)$ be a gradient pair for $\hamiltonian$,
	let $(\varphi,U)$ be an invertible system transformation from another open state space $\widetilde\statespace=\widetilde\statespace_1\times\widetilde\statespace_2$, where $\widetilde\statespace_2$ is convex, and let
	us split $\varphi=(\varphi_1,\varphi_2)$ and $U=\left[\begin{smallmatrix}U_{11} & U_{12} \\ U_{21} & U_{22}\end{smallmatrix}\right]$ accordingly.
	Then the transformed gradient pair is semi-explicit if and only if $\jacobian_{\tilde\state_2}\varphi_1=0$ and $U_{12}=0$.
\end{proposition}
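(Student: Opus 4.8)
The plan is to compute the transformed descriptor matrix $\widetilde E = U\transp(E\circ\varphi)\jacobian\varphi$ in $2\times 2$ block form with respect to the partition $n=n_1+n_2$ and to read off directly when it has the semi-explicit shape. First I would recall that in the semi-explicit setting $\statespace=\statespace_1\times\statespace_2$ and $E=\diag(E_{11},0)$ with $E_{11}\in\cont(\statespace,\R^{n_1,n_1})$ pointwise invertible, so that $E$ has pointwise rank $n_1$, while here $\widetilde\statespace_1\subseteq\R^{n_1}$ and $\widetilde\statespace_2\subseteq\R^{n_2}$ are open with $\widetilde\statespace_2$ convex. Since $\widetilde\statespace=\widetilde\statespace_1\times\widetilde\statespace_2$ already has the product form required by \Cref{def:semi-explicit_gradient_pair}, the transformed pair $(\widetilde E,\tilde\costate)$ is semi-explicit exactly when $\widetilde E=\diag(\widetilde E_{11},0)$ for a pointwise invertible $\widetilde E_{11}\in\cont(\widetilde\statespace,\R^{n_1,n_1})$. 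The key preliminary observation is that, once the block-diagonal form is known, invertibility of $\widetilde E_{11}$ comes for free: since $U$ and $\jacobian\varphi$ are pointwise invertible and $E\circ\varphi$ has pointwise rank $n_1$, the product $\widetilde E(\tilde\state)=U(\tilde\state)\transp E(\varphi(\tilde\state))\jacobian\varphi(\tilde\state)$ has rank $n_1$ at every point, so $\widetilde E=\diag(\widetilde E_{11},0)$ with $\widetilde E_{11}$ of size $n_1$ forces $\widetilde E_{11}$ to have full rank everywhere. Hence the statement reduces to showing that the blocks $\widetilde E_{12}$, $\widetilde E_{21}$, $\widetilde E_{22}$ of $\widetilde E$ vanish identically if and only if $\jacobian_{\tilde\state_2}\varphi_1=0$ and $U_{12}=0$.

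Next I would carry out the block multiplication. Splitting $\jacobian\varphi=\left[\begin{smallmatrix}\jacobian_{\tilde\state_1}\varphi_1 & \jacobian_{\tilde\state_2}\varphi_1\\ \jacobian_{\tilde\state_1}\varphi_2 & \jacobian_{\tilde\state_2}\varphi_2\end{smallmatrix}\right]$, using $E\circ\varphi=\diag(E_{11}\circ\varphi,0)$ (whose second block row vanishes) and the given block form of $U$ (so that the $(2,1)$ block of $U\transp$ is $U_{12}\transp$), one obtains
\[
	\widetilde E = \begin{bmatrix}
		U_{11}\transp(E_{11}\circ\varphi)\jacobian_{\tilde\state_1}\varphi_1 & U_{11}\transp(E_{11}\circ\varphi)\jacobian_{\tilde\state_2}\varphi_1 \\
		U_{12}\transp(E_{11}\circ\varphi)\jacobian_{\tilde\state_1}\varphi_1 & U_{12}\transp(E_{11}\circ\varphi)\jacobian_{\tilde\state_2}\varphi_1
	\end{bmatrix}.
\]
The \enquote{if} direction then follows at once: if $\jacobian_{\tilde\state_2}\varphi_1=0$ and $U_{12}=0$, the blocks $\widetilde E_{12}$, $\widetilde E_{21}$, $\widetilde E_{22}$ all vanish, so $\widetilde E=\diag\bigl(U_{11}\transp(E_{11}\circ\varphi)\jacobian_{\tilde\state_1}\varphi_1,0\bigr)$, which is semi-explicit by the rank argument above.

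For the \enquote{only if} direction I would assume $\widetilde E=\diag(\widetilde E_{11},0)$ with $\widetilde E_{11}$ pointwise invertible and argue with determinants. From the $(1,1)$ block, $\widetilde E_{11}=U_{11}\transp(E_{11}\circ\varphi)\jacobian_{\tilde\state_1}\varphi_1$ is a pointwise invertible product of three square matrix functions of size $n_1$; since $E_{11}\circ\varphi$ is already pointwise invertible, multiplicativity of the determinant forces $U_{11}$ and $\jacobian_{\tilde\state_1}\varphi_1$ to be pointwise invertible as well. Then the vanishing of the $(2,1)$ block, $U_{12}\transp(E_{11}\circ\varphi)\jacobian_{\tilde\state_1}\varphi_1=0$, gives $U_{12}\transp=0$ after right-multiplication by the pointwise inverse of $(E_{11}\circ\varphi)\jacobian_{\tilde\state_1}\varphi_1$, i.e.\ $U_{12}=0$; and the vanishing of the $(1,2)$ block, $U_{11}\transp(E_{11}\circ\varphi)\jacobian_{\tilde\state_2}\varphi_1=0$, gives $\jacobian_{\tilde\state_2}\varphi_1=0$ after left-multiplication by the pointwise inverse of $U_{11}\transp(E_{11}\circ\varphi)$ (the $(2,2)$ block then vanishes automatically). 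The computation is essentially block bookkeeping, and I do not expect a genuine obstacle; the one point that must be handled with care is justifying that each cancellation is performed by a pointwise invertible square factor — in particular that $U_{11}$ and $\jacobian_{\tilde\state_1}\varphi_1$ are invertible, which is not part of the hypotheses but is forced by the invertibility of $E_{11}$ together with that of $\widetilde E_{11}$, the latter itself being a free consequence of $E$ having constant rank $n_1$.
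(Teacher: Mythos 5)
Your proposal is correct and follows essentially the same route as the paper: the same explicit block computation of $\widetilde E = U\transp(E\circ\varphi)\jacobian\varphi$, reading off that semi-explicitness is equivalent to $\widetilde E_{12},\widetilde E_{21},\widetilde E_{22}=0$ with $\widetilde E_{11}$ invertible, and the same cancellation arguments for the \enquote{only if} direction. The only cosmetic difference is that for invertibility of $\widetilde E_{11}$ in the \enquote{if} direction you use the constant-rank argument $\rank(\widetilde E)=\rank(E)=n_1$ (which the paper also invokes, just at the characterization step), whereas the paper deduces it from the block lower-triangular structure of $U$ and $\jacobian\varphi$; both are valid.
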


\begin{proof}
	Denoting by $(\widetilde E,\tilde\costate)$ the transformed gradient pair, we have
	\begin{align*}
		\widetilde E = U\transp(E\circ\varphi)\jacobian\varphi & =
		\begin{bmatrix}
			U_{11} & U_{12} \\ U_{21} & U_{22}
		\end{bmatrix}\transp
		\begin{bmatrix}
			E_{11}\circ\varphi & 0 \\ 0 & 0
		\end{bmatrix}
		\begin{bmatrix}
			\jacobian_{\tilde\state_1}{\varphi_1} & \jacobian_{\tilde\state_2}{\varphi_1} \\ \jacobian_{\tilde\state_1}{\varphi_2} & \jacobian_{\tilde\state_2}{\varphi_2}
		\end{bmatrix} \\
		                                                       & =
		\begin{bmatrix}
			U_{11}\transp (E_{11}\circ\varphi) \jacobian_{\tilde\state_1}{\varphi_1} & U_{11}\transp (E_{11}\circ\varphi) \jacobian_{\tilde\state_2}{\varphi_1} \\
			U_{12}\transp (E_{11}\circ\varphi) \jacobian_{\tilde\state_1}{\varphi_1} & U_{12}\transp (E_{11}\circ\varphi) \jacobian_{\tilde\state_2}{\varphi_1}
		\end{bmatrix}.
	\end{align*}
	Note that, since $U$ and $\jacobian\varphi$ are pointwise invertible, $\rank(\widetilde E)=\rank(E)$ pointwise, and therefore
	$(\widetilde E,\tilde\costate)$ is a semi-explicit gradient pair
	if and only if $\widetilde E_{ij}=U_{1i}\transp (E_{11}\circ\varphi) \jacobian_{\tilde\state_j}{\varphi_1}$ is invertible for $i=j=1$ and zero otherwise.

	Suppose first that $\widetilde E$ has the wished structure.
	Since $\widetilde E_{11}$ is invertible, so are $U_{11}$ and $\jacobian_{\tilde\state_1}\varphi_1$.
	Then we deduce from $\widetilde E_{12},\widetilde E_{21}=0$ that $U_{12},\jacobian_{\tilde\state_2}\varphi_1=0$.
	Suppose now that $U_{12},\jacobian_{\tilde\state_2}\varphi_1=0$.
	Then it is clear that $\widetilde E_{ij}=0$ for $(i,j)\neq(1,1)$.
	Furthermore, since $U$ and $\jacobian\varphi$ are pointwise invertible and block lower triangular, we deduce that $U_{11}$ and $\jacobian_{\tilde\state_1}\varphi_1$ are also pointwise invertible, and so is $\widetilde E_{11}$.
\end{proof}

\noindent We also need the following result, which provides conditions ensuring that the existence of a specified Hamiltonian is preserved when transforming the system.

\begin{lemma}\label{lem:DG_specChainRule}
	Let $\varphi$ be a diffeomorphism as in \Cref{lem:semiExplicitStructurePreserving} satisfying $\jacobian_{\tilde\state_2}\varphi_1=0$.
	Then, the following assertions hold.
	\begin{enumerate}[label=(\roman*)]
		\item There is a diffeomorphism $\varphi_{11}\in\cont^1(\widetilde\statespace_1,\statespace_1)$ such that
		      $\varphi_{11}\circ\pi_1=\varphi_1$ and $\jacobian\varphi_{11}\circ\pi_1=\jacobian_{\tilde\state_1}\varphi_1$.
		\item Let $\hamiltonian\in\cont^1(\statespace,\R)$ be such that $\gradient_{\state_2}\hamiltonian=0$, let $\specified\hamiltonian\in\cont^1(\statespace_1,\R)$ be its specified function defined as in \Cref{lem:specifiedDG}, let $\DG\specified\hamiltonian$ be a discrete gradient for $\specified\hamiltonian$, and let $\discretejacobian\varphi_{11}$ be a discrete Jacobian for $\varphi_{11}$.
		      Then $\widetilde\hamiltonian=\hamiltonian\circ\varphi$ has a specified function $\specified{\widetilde\hamiltonian}\in\cont^1(\widetilde\statespace_1,\R)$, and
		      \begin{equation}\label{eq:DG_specChainRule}
			      \DG\specified{\widetilde\hamiltonian} = (\discretejacobian\varphi_{11})\transp (\DG\specified\hamiltonian\circ\varphi_{11})
		      \end{equation}
		      is a discrete gradient for $\specified{\widetilde\hamiltonian}$.
	\end{enumerate}
\end{lemma}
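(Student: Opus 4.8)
The plan is to establish (i) first---constructing $\varphi_{11}$ and proving it is a diffeomorphism with the stated properties---and then to obtain (ii) almost immediately from the chain rule for discrete gradients \eqref{eq:DG_chainRule}. For (i): since $\jacobian_{\tilde\state_2}\varphi_1=0$ and $\widetilde\statespace_2$ is convex (hence connected), for each fixed $\tilde\state_1\in\widetilde\statespace_1$ the map $\tilde\state_2\mapsto\varphi_1(\tilde\state_1,\tilde\state_2)$ is constant; I define $\varphi_{11}(\tilde\state_1)$ to be this common value. This gives $\varphi_{11}\in\cont^1(\widetilde\statespace_1,\R^{n_1})$ with $\varphi_{11}\circ\pi_1=\varphi_1$ (the regularity and these identities can alternatively be read off from \Cref{lem:specifiedDG} applied componentwise to $\varphi_1$), and differentiating $\varphi_{11}\circ\pi_1=\varphi_1$ yields $\jacobian\varphi_{11}\circ\pi_1=\jacobian_{\tilde\state_1}\varphi_1$. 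Taking first components of $\varphi(\widetilde\statespace)=\statespace=\statespace_1\times\statespace_2$ shows $\varphi_{11}(\widetilde\statespace_1)=\varphi_1(\widetilde\statespace)=\statespace_1$, so $\varphi_{11}$ is onto $\statespace_1$. Moreover, since $\jacobian_{\tilde\state_2}\varphi_1=0$, the Jacobian $\jacobian\varphi$ is block lower triangular, and its pointwise invertibility (as $\varphi$ is a diffeomorphism) forces the diagonal blocks $\jacobian_{\tilde\state_1}\varphi_1=\jacobian\varphi_{11}\circ\pi_1$ and $\jacobian_{\tilde\state_2}\varphi_2$ to be pointwise invertible; in particular $\varphi_{11}$ is a local diffeomorphism.

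The remaining point---which I expect to be the only genuinely nonroutine step---is the injectivity of $\varphi_{11}$, obtained by a connectedness argument. For $c\in\statespace_1$ the fiber satisfies $\varphi_{11}^{-1}(c)\times\widetilde\statespace_2=\set{\tilde\state\in\widetilde\statespace\mid\varphi_1(\tilde\state)=c}=\varphi^{-1}(\set{c}\times\statespace_2)$, which is connected, being the image of the connected set $\set{c}\times\statespace_2$ (recall $\statespace_2$ is convex) under the homeomorphism $\varphi^{-1}$. Since $\varphi_{11}^{-1}(c)$ is a continuous image of this connected product under the projection and $\widetilde\statespace_2$ is nonempty, $\varphi_{11}^{-1}(c)$ is connected; but the fibers of the local diffeomorphism $\varphi_{11}$ are discrete, and $\varphi_{11}^{-1}(c)$ is nonempty by surjectivity, so it is a single point. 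Hence $\varphi_{11}$ is injective, and a bijective local diffeomorphism onto $\statespace_1$ is a diffeomorphism, which completes (i).

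For (ii), since $\gradient_{\state_2}\hamiltonian=0$ we have $\hamiltonian=\specified\hamiltonian\circ\pi_1$ by \Cref{lem:specifiedDG}, so $\widetilde\hamiltonian(\tilde\state_1,\tilde\state_2)=\hamiltonian(\varphi(\tilde\state_1,\tilde\state_2))=\specified\hamiltonian(\varphi_1(\tilde\state_1,\tilde\state_2))=\specified\hamiltonian(\varphi_{11}(\tilde\state_1))$ depends only on $\tilde\state_1$; consequently $\widetilde\hamiltonian$ admits a specified function, namely $\specified{\widetilde\hamiltonian}=\specified\hamiltonian\circ\varphi_{11}\in\cont^1(\widetilde\statespace_1,\R)$, with $\gradient\specified{\widetilde\hamiltonian}=(\jacobian\varphi_{11})\transp(\gradient\specified\hamiltonian\circ\varphi_{11})$ by the chain rule. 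Finally, as $\varphi_{11}$ is a diffeomorphism by (i), the chain rule for discrete gradients \eqref{eq:DG_chainRule}---equivalently, \Cref{thm:discreteGradientPairChainRule} applied with $\varphi_{11}$ in place of $\varphi$, $U=\discrete{U}=I_{n_1}$, $(E,\costate)=(I_{n_1},\gradient\specified\hamiltonian)$, and $(\discreteE,\discretecostate)=(I_{n_1},\DG\specified\hamiltonian)$---shows that $(\discretejacobian\varphi_{11})\transp(\DG\specified\hamiltonian\circ\varphi_{11})$ is a discrete gradient for $\specified{\widetilde\hamiltonian}$, which is exactly \eqref{eq:DG_specChainRule}. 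If a self-contained verification is preferred instead, directionality of this expression follows from the directionality of $\discretejacobian\varphi_{11}$ together with that of $\DG\specified\hamiltonian$, and its consistency from their consistency properties combined with the chain rule for $\gradient\specified{\widetilde\hamiltonian}$ recorded above.
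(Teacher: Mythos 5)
Your proof is correct, and part (ii) follows the paper's own route exactly (identify $\specified{\widetilde\hamiltonian}=\specified\hamiltonian\circ\varphi_{11}$, then invoke the chain rule \eqref{eq:DG_chainRule}). Where you genuinely diverge is in part (i): the paper never argues bijectivity topologically. Instead it passes to $\psi=(\psi_1,\psi_2)=\varphi^{-1}$, observes that $\jacobian\psi=(\jacobian\varphi\circ\psi)^{-1}$ inherits the block lower triangular structure, applies the same specified-function construction of \Cref{lem:specifiedDG} to $\psi_1$ to obtain $\psi_{11}\in\cont^1(\statespace_1,\widetilde\statespace_1)$, and checks directly that $\varphi_{11}\circ\psi_{11}$ and $\psi_{11}\circ\varphi_{11}$ are identities, so the inverse of $\varphi_{11}$ is exhibited explicitly and its regularity comes from the pointwise invertibility of $\jacobian_{\tilde\state_1}\varphi_1$. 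You instead get surjectivity from $\varphi(\widetilde\statespace)=\statespace_1\times\statespace_2$ and injectivity from a connectedness argument: the fiber $\varphi_{11}^{-1}(c)\times\widetilde\statespace_2=\varphi^{-1}(\set{c}\times\statespace_2)$ is connected because $\statespace_2$ is convex and $\varphi^{-1}$ is a homeomorphism, while fibers of the local diffeomorphism $\varphi_{11}$ are discrete, hence singletons; a bijective local diffeomorphism is then a diffeomorphism. Both arguments are sound in the lemma's setting (where both second factors are convex, hence connected, and implicitly nonempty). The paper's approach buys constructiveness and economy --- it reuses \Cref{lem:specifiedDG} verbatim and produces $\varphi_{11}^{-1}=\psi_{11}$ explicitly, which is also the object used later when inverting transformations --- whereas your approach avoids manipulating $\varphi^{-1}$ and its Jacobian structure at the price of importing a few standard topological facts and the explicit nonemptiness/connectedness hypotheses.
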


\begin{proof}
	Let us prove the two statements separately:
	\begin{enumerate}[label=(\roman*)]
		\item It is clear that $\varphi_{11}\in\cont^1(\widetilde\statespace_1,\statespace_1)$ satisfying
		      $\varphi_{11}\circ\pi_1=\varphi_1$ and $\jacobian\varphi\circ\pi_1=\jacobian_{\tilde\state_1}\varphi_1$
		      exists because of \Cref{lem:specifiedDG}.
		      Let now $\psi=(\psi_1,\psi_2)=\varphi^{-1}$.
		      Since $\jacobian\psi=(\jacobian\varphi\circ\psi)^{-1}$ has the same block lower triangular structure as $\jacobian\varphi$, we construct $\psi_{11}\in\cont^1(\statespace_1,\widetilde\statespace_1)$ analogously to $\varphi_{11}$, and deduce that $\varphi_{11}\circ\psi_{11}$ and $\psi_{11}\circ\varphi_{11}$ are both the identity.
		      Since $\jacobian\varphi_{11}$ inherits from $\jacobian_{\tilde\state_1}\varphi_1$ (and therefore from $\jacobian\varphi$) the property of being pointwise invertible, we conclude that $\varphi_{11}$ is indeed a diffeomorphism.
		\item Since
		      \[
			      \gradient\widetilde\hamiltonian = (\jacobian\varphi)\transp (\gradient\hamiltonian\circ\varphi) =
			      \begin{bmatrix}
				      \jacobian_{\tilde\state_1}\varphi_1\transp & \jacobian_{\tilde\state_1}\varphi_2\transp \\ 0 & \jacobian_{\tilde\state_2}\varphi_2\transp
			      \end{bmatrix}
			      \begin{bmatrix}
				      \gradient_{\state_1}\hamiltonian\circ\varphi \\ 0
			      \end{bmatrix}
			      =
			      \begin{bmatrix}
				      \jacobian_{\tilde\state_1}\varphi_1\transp(\gradient_{\state_1}\hamiltonian\circ\varphi) \\ 0
			      \end{bmatrix},
		      \]
		      the specified function $\specified{\widetilde\hamiltonian}$ is well-defined, and clearly satisfies $\specified{\widetilde\hamiltonian}=\specified\hamiltonian\circ\varphi_{11}$.
		      The fact that $\DG\specified{\widetilde\hamiltonian}$ is a discrete gradient for $\specified{\widetilde\hamiltonian}$ follows immediately from the chain rule \eqref{eq:DG_chainRule}. \qedhere
	\end{enumerate}
\end{proof}

\noindent We can now prove the following result, which states that applying the semi-explicit discrete gradient scheme from \Cref{sec_version1} to a semi-explicit system and to a corresponding transformed semi-explicit system leads to equivalent time-discrete systems.

\begin{theorem}\label{thm:SEDG_scheme_invariant}
	Consider a semi-explicit pHDAE of the form \eqref{block_pHDAE}, let $\DG\specified\hamiltonian$ be a discrete gradient for the specified Hamiltonian $\specified\hamiltonian$, and let $\discrete E_{11},\discretecostate_2,\discreteJ,\discreteR,\discreteB$ be consistent approximations for $E_{11},\costate_2,J,R,B$, respectively, such that assumption \ref{ass:invertible_discreteE11} is satisfied, and $\discreteJ=-\discreteJ\transp$ and $\discreteR=\discreteR\transp\succeq 0$ hold pointwise.
	Let $(\varphi,U)$ be an invertible system transformation preserving the semi-explicit structure as in \Cref{lem:semiExplicitStructurePreserving}, let $\discretejacobian\varphi_{11}$ be a pointwise invertible discrete Jacobian of $\varphi_{11}$ as specified in \Cref{lem:DG_specChainRule}, and let $\discrete U=\Big[\begin{smallmatrix}\discrete U_{11} & 0 \\ \discrete U_{21} & \discrete U_{22}\end{smallmatrix}\Big]$ be a pointwise invertible consistent approximation for $U$.
	Then the semi-explicit discrete gradient scheme \eqref{block_pHDAE_timestepping_2} applied to the original system, with the discrete gradient $\DG\specified\hamiltonian$ and the consistent approximations $\discrete E_{11},\discretecostate_2,\discreteJ,\discreteR,\discreteB$, is equivalent to the same scheme applied to the system transformed via $(\varphi,U)$, with the discrete gradient $\DG\specified{\widetilde\hamiltonian}$ defined as in \eqref{eq:DG_specChainRule} and the consistent approximations $\widehat E_{11}=\discrete{U}_{11}\transp(\discreteE_{11}\circ\varphi)(\discretejacobian\varphi_{11}\circ\pi_1)$, $\hat\costate_2 = \discrete{U}_{22}^{-1} \pset[\big]{ (\discretecostate_2\circ\varphi) - \discrete U_{21}\widehat E_{11}\ntransp(\DG\specified{\widetilde\hamiltonian}\circ\pi_1)}$, $\widehat J=\discrete{U}\transp(\discreteJ\circ\varphi)\discrete{U}$, $\widehat R=\discrete{U}\transp(\discreteR\circ\varphi)\discrete{U}$, and $\widehat B=\discrete{U}\transp(\discreteB\circ\varphi)$, up to the change of variables $\varphi$.
\end{theorem}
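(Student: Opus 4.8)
Rather than manipulating the equations \eqref{block_pHDAE_timestepping_2} directly, the plan is to route the proof through the discrete gradient pair scheme \eqref{block_pHDAE_timestepping}, whose invariance is already \Cref{thm:DGP_scheme_invariant}. I would chain four equivalences: the semi-explicit scheme on the original system is equivalent to the discrete gradient pair scheme on the original system, which is equivalent to the discrete gradient pair scheme on the transformed system, which is equivalent to the semi-explicit scheme on the transformed system --- the outer two links being \Cref{cor_link1} and the middle one \Cref{thm:DGP_scheme_invariant}. For the first link, \Cref{thm_link1} together with \Cref{cor_link1} shows that the semi-explicit scheme on the original system with $\DG\specified\hamiltonian,\discreteE_{11},\discretecostate_2,\discreteJ,\discreteR,\discreteB$ produces the same iterates as \eqref{block_pHDAE_timestepping} applied with the discrete gradient pair $(\discreteE,\discretecostate)$ from \eqref{eq:DGP_link1}, i.e.\ $\discreteE=\diag(\discreteE_{11},0)$ and $\discretecostate=\bigl(\discreteE_{11}\ntransp(\DG\specified\hamiltonian\circ\pi_1),\,\discretecostate_2\bigr)$, together with the same $\discreteJ,\discreteR,\discreteB$.

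\textbf{Transporting the discrete gradient pair.} Since $(\varphi,U)$ preserves the semi-explicit structure, \Cref{lem:semiExplicitStructurePreserving} gives $\jacobian_{\tilde\state_2}\varphi_1=0$ and $U_{12}=0$, and \Cref{lem:DG_specChainRule}(i) supplies $\varphi_{11}$ with $\varphi_1=\varphi_{11}\circ\pi_1$. Applying \Cref{lem:specifiedDG} row by row to the entries of $\varphi_1$, the prescribed discrete Jacobian $\discretejacobian\varphi_{11}$ extends to a block lower triangular discrete Jacobian $\discretejacobian\varphi$ of $\varphi$ whose $(1,1)$-block is $\discretejacobian\varphi_{11}\circ\pi_1$ and whose $(1,2)$-block vanishes, with the second block row being any discrete Jacobian of $\varphi_2$; moreover $\discrete U=\left[\begin{smallmatrix}\discrete U_{11} & 0\\ \discrete U_{21} & \discrete U_{22}\end{smallmatrix}\right]$ is block lower triangular and pointwise invertible, hence so is $\discrete U_{11}$. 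By \Cref{thm:DGP_scheme_invariant} (whose proof does not require $\discretejacobian\varphi$ to be invertible), the discrete gradient pair scheme on the original system is then equivalent, up to the change of variables $\varphi$, to the discrete gradient pair scheme on the transformed system with the discrete gradient pair $(\widehat E,\hat\costate)$ from \eqref{eq:DGP_chainRule} and $\widehat J=\discrete U\transp(\discreteJ\circ\varphi)\discrete U$, $\widehat R=\discrete U\transp(\discreteR\circ\varphi)\discrete U$, $\widehat B=\discrete U\transp(\discreteB\circ\varphi)$.

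\textbf{Identifying $(\widehat E,\hat\costate)$ with the transformed semi-explicit data.} Multiplying out the block forms gives $\widehat E=\discrete U\transp(\discreteE\circ\varphi)\discretejacobian\varphi=\diag(\widehat E_{11},0)$ with $\widehat E_{11}=\discrete U_{11}\transp(\discreteE_{11}\circ\varphi)(\discretejacobian\varphi_{11}\circ\pi_1)$, which is pointwise invertible as a product of pointwise invertible factors, so assumption \ref{ass:invertible_discreteE11} holds for the transformed system. Using the block lower triangular inverse of $\discrete U$, the first block of $\hat\costate=\discrete U^{-1}(\discretecostate\circ\varphi)$ is $\discrete U_{11}^{-1}(\discretecostate_1\circ\varphi)=\discrete U_{11}^{-1}(\discreteE_{11}\circ\varphi)\ntransp(\DG\specified\hamiltonian\circ\varphi_{11}\circ\pi_1)$ and the second is $\discrete U_{22}^{-1}\bigl((\discretecostate_2\circ\varphi)-\discrete U_{21}\discrete U_{11}^{-1}(\discretecostate_1\circ\varphi)\bigr)$. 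Invoking the chain rule \eqref{eq:DG_specChainRule}, $\DG\specified{\widetilde\hamiltonian}=(\discretejacobian\varphi_{11})\transp(\DG\specified\hamiltonian\circ\varphi_{11})$, and the identity $\widehat E_{11}\ntransp=\discrete U_{11}^{-1}(\discreteE_{11}\circ\varphi)\ntransp(\discretejacobian\varphi_{11}\circ\pi_1)\ntransp$, the cancellation $(\discretejacobian\varphi_{11}\circ\pi_1)\ntransp(\discretejacobian\varphi_{11}\circ\pi_1)\transp=I$ yields $\widehat E_{11}\ntransp(\DG\specified{\widetilde\hamiltonian}\circ\pi_1)=\discrete U_{11}^{-1}(\discretecostate_1\circ\varphi)$. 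Hence the first block of $\hat\costate$ equals $\widehat E_{11}\ntransp(\DG\specified{\widetilde\hamiltonian}\circ\pi_1)$ and, substituting, the second block equals the $\hat\costate_2$ of the statement; that is, $(\widehat E,\hat\costate)$ is exactly the pair that \eqref{eq:DGP_link1}, applied in the transformed coordinates, attaches to the semi-explicit data $\DG\specified{\widetilde\hamiltonian},\widehat E_{11},\hat\costate_2$ on the transformed system. Since the transformed system is a semi-explicit pHDAE of the form \eqref{block_pHDAE} with specified Hamiltonian $\specified{\widetilde\hamiltonian}$ (\Cref{lem:semiExplicitStructurePreserving}, \Cref{lem:DG_specChainRule}(ii)), all transformed approximations are consistent, $\widehat J=-\widehat J\transp$ and $\widehat R=\widehat R\transp\succeq0$ hold by congruence, and $\DG\specified{\widetilde\hamiltonian}$ is a discrete gradient for $\specified{\widetilde\hamiltonian}$ by \Cref{lem:DG_specChainRule}(ii), \Cref{cor_link1} applied in the transformed coordinates closes the last link, and the four equivalences combine to the assertion.

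\textbf{Main obstacle and an alternative.} The delicate point is the bookkeeping of the third step: making the block-triangular choice of $\discretejacobian\varphi$ rigorous (it genuinely relies on combining \Cref{lem:DG_specChainRule}(i) with the construction of \Cref{lem:specifiedDG} componentwise) and then checking that the co-state produced mechanically by \eqref{eq:DGP_chainRule} coincides with the $\hat\costate$ written in the theorem, which hinges on the identity $(\discretejacobian\varphi_{11}\circ\pi_1)\ntransp(\discretejacobian\varphi_{11}\circ\pi_1)\transp=I$ and on the invertibility hypotheses on $\discretejacobian\varphi_{11}$, $\discrete U_{11}$, and $\discreteE_{11}$ all lining up. A more self-contained route that avoids \Cref{thm:DGP_scheme_invariant} is to transform each of the three equations of \eqref{block_pHDAE_timestepping_2} separately under $\state=\varphi(\tilde\state)$, mirroring the proof of \Cref{thm:DGP_scheme_invariant} and additionally verifying that the constraint \eqref{block_pHDAE_timestepping_2_4} is carried onto its hatted counterpart; this requires exactly the same algebraic identities and is essentially the unpacked version of the argument above.
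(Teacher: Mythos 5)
Your proposal is correct and follows essentially the same route as the paper's proof: verify well-definedness of the transformed scheme, build a block lower triangular discrete Jacobian $\discretejacobian\varphi$ with $(1,1)$-block $\discretejacobian\varphi_{11}\circ\pi_1$, and then chain \Cref{cor_link1}, \Cref{thm:DGP_scheme_invariant}, and \Cref{cor_link1} again, identifying the transported pair $(\widehat E,\hat\costate)$ from \eqref{eq:DGP_chainRule} with the pair that \eqref{eq:DGP_link1} attaches to $\DG\specified{\widetilde\hamiltonian},\widehat E_{11},\hat\costate_2$ in the new coordinates. The block computation of $\hat\costate$ via the chain rule \eqref{eq:DG_specChainRule} matches the paper's calculation, so no gaps remain.
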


\begin{proof}
	We first show that the semi-explicit discrete gradient scheme applied to the transformed system is well-defined.
	To this end, we observe that the transformed system is semi-explicit by assumption, $\DG\specified{\widetilde\hamiltonian}$ is a discrete gradient of $\specified{\widetilde\hamiltonian}$ because of \Cref{lem:DG_specChainRule}, $\widehat E_{11}$, $\widehat J$, $\widehat R$, and $\widehat B$ are consistent approximations of $\widetilde E_{11}=U_{11}\transp(E_{11}\circ\varphi)\jacobian_{\tilde\state_1}\varphi_1 = U_{11}\transp(E_{11}\circ\varphi)(\jacobian\varphi_{11}\circ\pi_1)$, $\widetilde J=U\transp(J\circ\varphi)U$, $\widetilde R=U\transp(R\circ\varphi)U$, and $\widetilde B=U\transp(B\circ\varphi)$, respectively, and $\widehat J=-\widehat J\transp$ and $\widehat R=\widehat R\transp\succeq0$ hold pointwise.
	Furthermore, note that $\widehat E_{11}$ satisfies assumption \ref{ass:invertible_discreteE11} and that, since
	\begin{align*}
		\tilde\costate_2
		 & = [0,I_{n_2}]U^{-1}(\costate\circ\varphi)
		= U_{22}^{-1}\pset[\big]{ (\costate_2\circ\varphi) - U_{21}U_{11}^{-1}(\costate_1\circ\varphi) }                                                     \\
		 & = U_{22}^{-1}\pset[\big]{ (\costate_2\circ\varphi) - U_{21}U_{11}^{-1}(E_{11}\circ\varphi)\ntransp(\gradient_{\state_1}\hamiltonian\circ\varphi)}
		= U_{22}^{-1}\pset[\big]{ (\costate_2\circ\varphi) - U_{21}\widetilde E_{11}\ntransp(\gradient\specified{\widetilde\hamiltonian}\circ\pi_1) },
	\end{align*}
	$\hat\costate_2$ is a consistent approximation of $\tilde\costate_2$.

	Let us now construct a discrete Jacobian $\discretejacobian\varphi$ of $\varphi$, in such a way that $\discretejacobian_{\tilde\state_1}\varphi_1=\discretejacobian\varphi_{11}\circ\pi_1$.
	This can be simply done by choosing a discrete Jacobian $\discretejacobian\varphi_2$ of $\varphi_2=\pi_2\circ\varphi$ and defining
	\[
		\discretejacobian\varphi =
		\begin{bmatrix}
			\discretejacobian\varphi_{11}\circ\pi_1 & 0 \\ \discretejacobian_{\tilde\state_1}\varphi_2 & \discretejacobian_{\tilde\state_2}\varphi_2
		\end{bmatrix},
	\]
	which, as it can be easily verified, fulfills the discrete Jacobian definition.

	To prove the statement of the theorem it is then sufficient to combine \Cref{cor_link1} and \Cref{thm:DGP_scheme_invariant}.
	In fact, the semi-explicit discrete gradient scheme applied to the original system is equivalent to the discrete gradient pair scheme applied to the same system with the discrete gradient pair $(\discreteE,\discretecostate)$ defined as in \eqref{eq:DGP_link1}, i.e.,
	\[
		\discreteE =
		\begin{bmatrix}
			\discreteE_{11} & 0 \\ 0 & 0
		\end{bmatrix}, \qquad
		\discretecostate =
		\begin{bmatrix}
			\discreteE_{11}\ntransp (\DG\specified\hamiltonian\circ\pi_1) \\
			\discretecostate_2
		\end{bmatrix},
	\]
	where $\pi_1$ here is to be intended as the projection of $\statespace$ onto $\statespace_1$.
	This discrete system is then equivalent up to the change of variables $\varphi$ to the one yielded by the discrete gradient pair scheme applied to the transformed system with the discrete gradient pair $(\widehat E,\hat\costate)$ defined as in \Cref{thm:DGP_scheme_invariant}, i.e.,

	\begin{align*}
		\widehat E   & = \discrete U\transp (\discreteE\circ\varphi) \discretejacobian\varphi =
		\begin{bmatrix}
			\discrete U_{11}\transp (\discreteE_{11}\circ\varphi) (\discretejacobian\varphi_{11}\circ\pi_1) & 0 \\ 0 & 0
		\end{bmatrix}
		=
		\begin{bmatrix}
			\widehat E_{11} & 0 \\ 0 & 0
		\end{bmatrix},                                                             \\
		\hat\costate & = \discrete{U}^{-1}(\discretecostate\circ\varphi) =
		\begin{bmatrix}
			\discrete U_{11}^{-1}(\discreteE_{11}\circ\varphi)\ntransp(\DG\specified\hamiltonian\circ\pi_1\circ\phi) \\
			\discrete{U}_{22}^{-1} \pset[\big]{ (\discretecostate_2\circ\varphi) - \discrete U_{21}\discrete U_{11}^{-1}(\discreteE_{11}\circ\varphi)\ntransp(\DG\specified\hamiltonian\circ\pi_1\circ\phi) }
		\end{bmatrix}
		=
		\begin{bmatrix}
			\widehat{E}_{11}\ntransp (\DG\specified{\widetilde\hamiltonian}\circ\pi_1) \\
			\hat\costate_2
		\end{bmatrix}
		,
	\end{align*}
	and the consistent approximations $\widehat J,\widehat R,\widehat B$.
	Finally, due to the structure of $(\widehat E,\hat\costate)$, we can apply again \Cref{cor_link1} and conclude that the latest discrete system is equivalent to the one obtained by applying the semi-explicit discrete gradient scheme to the transformed system with the discrete gradient $\DG\specified{\widetilde\hamiltonian}$ and the consistent approximations $\widehat E_{11}$, $\hat\costate_2$, $\widehat J$, $\widehat R$, and $\widehat B$.
\end{proof}

\begin{remark}\label{rem:DDR_systemTransformation}
	A similar result holds for the DDR-method applied to a given DDR-pHDAE \eqref{eq_DDR}. On the one hand, its corresponding DDR-pHDAE transformed via $(\varphi,U)$ is
	\begin{equation}
		\begin{bmatrix}
			(\jacobian{\varphi})\transp(\gradient\hamiltonian\circ\varphi) \\ 0 \\ y
		\end{bmatrix}
		+
		\begin{bmatrix}
			0                                         & -\jacobian\varphi\transp (E\circ\varphi)\transp U & 0                        \\
			U\transp (E\circ\varphi) \jacobian\varphi & U\transp \pset[\big]{(J-R)\circ\varphi} U         & U\transp (B\circ\varphi) \\
			0                                         & (B\circ\varphi)\transp U                          & 0
		\end{bmatrix}
		\begin{bmatrix}
			-\dot{\tilde\state} \\ \tilde f \\ u
		\end{bmatrix}
		= 0,
	\end{equation}
	together with $\tilde f=U^{-1}(\costate\circ\varphi)$.
	On the other hand, by applying the DDR-method \eqref{eq_DDR_discrete} for a fixed discrete gradient $\DG\hamiltonian$ of $\hamiltonian$ and some given consistent approximations $\discreteE$, $\discreteJ$, $\discreteR$, and $\discreteB$ of $E$, $J$, $R$, and $B$, respectively, and multiplying the first block row by $(\discretejacobian\varphi)\transp$ and the second block row by $\discrete U\transp$, where $\discretejacobian\varphi$ is a discrete Jacobian for $\varphi$ and $\discrete U$ is a consistent approximation of $U$, and replacing $\state\none-\state\n=\discretejacobian\varphi(\tilde\state\n,\tilde\state\none)(\tilde\state\none-\tilde\state\n)$ and $\discretetimestep{f}=\discrete{U}(\tilde\state\n,\tilde\state\none)\discretetimestep{\tilde f}$ with $\tilde\state\n=\varphi^{-1}(\state\n)$ and $\tilde\state\none=\varphi^{-1}(\state\none)$, we obtain the one-step method

	\begin{equation}\label{eq:transfDiscDDR}
		\begin{bmatrix}
			(\discretejacobian\varphi)\transp(\DG\hamiltonian\circ\varphi) \\ 0 \\ \discreteoutput
		\end{bmatrix}
		+
		\begin{bmatrix}
			0                                                                     & -\discretejacobian\varphi\transp (\discreteE\circ\varphi)\transp \discrete{U}     & 0                                          \\
			\discrete{U}\transp (\discreteE\circ\varphi) \discretejacobian\varphi & \discrete{U}\transp \pset[\big]{(\discreteJ-\discreteR)\circ\varphi} \discrete{U} & \discrete U\transp(\discreteB\circ\varphi) \\
			0                                                                     & (\discreteB\circ\varphi)\transp \discrete U                                       & 0
		\end{bmatrix}
		\begin{bmatrix}
			-\frac{\tilde\state\none-\tilde\state\n}{h} \\ \discretetimestep{\tilde f} \\ \discreteinput
		\end{bmatrix}
		= 0,
	\end{equation}
	where the arguments $(\tilde\state\n,\tilde\state\none)$ have been omitted to keep the notation short.
	Note that, since $\DG\widetilde\hamiltonian=(\discretejacobian\varphi)\transp(\DG\hamiltonian\circ\varphi)$ is a discrete gradient for $\widetilde\hamiltonian=\hamiltonian\circ\varphi$ because of the chain rule, and $\widehat E=\discrete{U}\transp (\discreteE\circ\varphi) \discretejacobian\varphi$, $\widehat J=\discrete{U}\transp(\discreteJ\circ\varphi)\discrete{U}$, $\widehat R = \discrete{U}\transp(\discreteR\circ\varphi)\discrete{U}$, and $\widehat B=\discrete{U}\transp(\discreteB\circ\varphi)$ are consistent approximations of the correspondent coefficients of the transformed system \eqref{eq:pHDAE_transf}, the one-step method \eqref{eq:transfDiscDDR} is equivalent to an appropriate DDR-method applied to the transformed system.

	This construction immediately shows that every solution of the original discrete DDR system is uniquely mapped into a solution of the transformed discrete DDR system.
	However, to be able to uniquely associate to every solution of the transformed discrete DDR system one solution of the original discrete DDR system, we need to invert the construction, which requires the discrete Jacobian $\discretejacobian\varphi(\state\n,\state\none)$ to be invertible.

	Similarly to our considerations leading to
	Assumption \ref{ass:invertible_discreteE11} we note that, for small enough time steps, this is guaranteed by consistency, since $\discretejacobian\varphi(\state\n,\state\n)=\jacobian\varphi(\state\n)$ is invertible for every $\state\n\in\statespace$. However, there is in general no guarantee that the discrete Jacobian of a diffeomorphism is pointwise invertible, cf.~\Cref{ex:counterexampleForInvertibleDiscreteJacobianOfADiffeomorphism}.

	It remains to discuss how the additional constraints used in the DDR-methods change under system transformations. This strongly depends on the specific form of these constraints, which can be quite diverse.
	For example, if in the original coordinates we have a constraint of the form $F(\state\n,\state\none,\discretetimestep{f})=0$ for some function $F:\statespace\times\statespace\times\R^m\to\R^p$, then to obtain an equivalent one-step method the corresponding constraint in the new coordinates would be $\widetilde F(\tilde\state\n,\tilde\state\none,\discretetimestep{\tilde f})=0$ with
	\[
		\widetilde F(\tilde\state\n,\tilde\state\none,\discretetimestep{\tilde f}) = F(\varphi(\tilde\state\n),\varphi(\tilde\state\none),\discrete U(\tilde\state\n,\tilde\state\none)\discretetimestep{\tilde f}).
	\]
	Similarly, if in the original coordinates we require $\norm{\discretetimestep{f}-\discretecostate(\state\n,\state\none)}$ to be minimal for some fixed consistent discretization $\discretecostate$ of $\costate$, in the new coordinates we would minimize
	\[
		\norm{\discretetimestep{f}-\discretecostate(\state\n,\state\none)} = \norm{\discrete{U}(\tilde\state\n,\tilde\state\none)(\discretetimestep{\tilde f}-\hat\costate(\tilde\state\n,\tilde\state\none))}
	\]
	instead, where $\hat\costate=\discrete U^{-1}(\discretecostate\circ\varphi)$ is a consistent discretization of $\tilde\costate$.
\end{remark}

\noindent In this subsection, we focused on how system transformations affect the proposed time discretization schemes. In particular, we have proven that, given a system transformation that turns a pHDAE into semi-explicit form, one can always construct a discrete gradient pair for the original system. The conditions for the existence of such a transformation will be the focus of the next subsection.

\subsection{On the existence of a semi-explicit representation}
\label{sec:existence_semi_explicit}

\noindent \Cref{cor:DGP_fromSemiExplicit} requires the existence of an invertible system transformation $(\varphi,U)$ which brings the system to semi-explicit form.
In the special case where $E$ is constant, such a transformation can be obtained based on a singular value decomposition (SVD) of $E$, as detailed in the following proposition.

\begin{proposition}
	\label{prop:discreteGradientPair_constantE}
	Let $(E,\costate)$ be a gradient pair for $\hamiltonian\in\cont^1(\statespace,\R)$, assume that $E\in\R^{n,n}$ is constant, and
	let $E=U\Sigma V\transp$ be an SVD of $E$.
	Furthermore, let $U=[U_1,U_2]$, $V=[V_1,V_2]$, and $\Sigma=\diag(\Sigma_1,0)$ with $U_1,W_1\in\R^{n,r}$, $\Sigma_1\in\R^{r,r}$, and $r=\rank(E)$.
	Then the following statements hold:
	\begin{enumerate}[label=(\roman*)]
		\item $(\varphi,U)$ with $\varphi(\state)=V\state$ is an invertible system transformation that maps $(E,\costate)$ into a semi-explicit gradient pair.
		\item The specified function $\specified{\widetilde\hamiltonian}:\widetilde\statespace_1\to\R$ of $\widetilde\hamiltonian=\hamiltonian\circ\varphi$ satisfies $\specified{\widetilde\hamiltonian}(\tilde\state_1)=\hamiltonian(V_1\tilde\state_1)$.
		\item If $\DG\specified{\widetilde\hamiltonian}$ is a discrete gradient for $\specified{\widetilde\hamiltonian}$ and $\hat\costate_2$ is a consistent approximation for $U_2\transp(\costate\circ\varphi)$, then $(E,\discretecostate_2)$ with $\discretecostate_2=U_1\Sigma_1^{-1}\DG\specified{\widetilde\hamiltonian}+U_2\hat\costate$ is a discrete gradient pair for $(\hamiltonian,E,\costate)$.
	\end{enumerate}
\end{proposition}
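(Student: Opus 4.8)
The plan is to dispatch the three assertions in turn: (i) and (ii) by direct bookkeeping with the singular value decomposition, and (iii) by chaining \Cref{thm_link1} and \Cref{thm:discreteGradientPairChainRule}, in the spirit of the proof of \Cref{cor:DGP_fromSemiExplicit}. For (i), since $\varphi(\tilde\state)=V\tilde\state$ is linear we have $\jacobian\varphi\equiv V$, so the transformed descriptor matrix is $\widetilde E=U\transp(E\circ\varphi)\jacobian\varphi=U\transp EV=U\transp(U\Sigma V\transp)V=\Sigma=\diag(\Sigma_1,0)$ with $\Sigma_1$ constant and invertible, and the transformed co-state is $\tilde\costate=U^{-1}(\costate\circ\varphi)=U\transp(\costate\circ\varphi)$. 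That $(\widetilde E,\tilde\costate)$ is a gradient pair for $\widetilde\hamiltonian=\hamiltonian\circ\varphi$ is the first part of \Cref{thm:discreteGradientPairChainRule}, and the block structure of $\widetilde E$ with pointwise invertible upper-left block makes it semi-explicit in the sense of \Cref{def:semi-explicit_gradient_pair}. For (ii), from \eqref{constitutive_equation} and $E=U_1\Sigma_1V_1\transp$ we get $\gradient\hamiltonian=E\transp\costate=V_1\Sigma_1U_1\transp\costate\in\mathrm{span}(V_1)=\mathrm{span}(V_2)^\perp$, hence $\gradient_{\tilde\state_2}\widetilde\hamiltonian=V_2\transp(\gradient\hamiltonian\circ\varphi)=0$; \Cref{lem:semiExplicitGradientPair} then yields the specified function $\specified{\widetilde\hamiltonian}$, and since $\widetilde\hamiltonian(\tilde\state_1,\tilde\state_2)=\hamiltonian(V_1\tilde\state_1+V_2\tilde\state_2)$ is independent of $\tilde\state_2$, evaluating at $\tilde\state_2=0$ gives $\specified{\widetilde\hamiltonian}(\tilde\state_1)=\hamiltonian(V_1\tilde\state_1)$.

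For (iii), I would first apply \Cref{thm_link1} to the semi-explicit gradient pair $(\widetilde E,\tilde\costate)$ from (i): take the constant --- hence consistent and pointwise invertible --- discretization $\Sigma_1$ of $\widetilde E_{11}=\Sigma_1$, the prescribed discrete gradient $\DG\specified{\widetilde\hamiltonian}$, and the prescribed consistent approximation $\hat\costate_2$ of $\tilde\costate_2=U_2\transp(\costate\circ\varphi)$. This produces a discrete gradient pair for $(\widetilde\hamiltonian,\widetilde E,\tilde\costate)$ whose descriptor part is $\widetilde E$ and whose co-state part is $\big(\Sigma_1^{-1}(\DG\specified{\widetilde\hamiltonian}\circ\pi_1),\hat\costate_2\big)$, using $\Sigma_1^{-\top}=\Sigma_1^{-1}$ since $\Sigma_1$ is diagonal. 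I would then feed this discrete gradient pair into \Cref{thm:discreteGradientPairChainRule} along the inverse system transformation $(\varphi^{-1},U^{-1})$, using the constant consistent discretization $U\transp$ of $U^{-1}$ (whose pointwise inverse is $U$) and the constant discrete Jacobian $V\transp$ of $\varphi^{-1}$ (legitimate because $\varphi^{-1}(\state)=V\transp\state$ is linear). By \eqref{eq:DGP_chainRule} this gives a discrete gradient pair $(\discreteE,\discretecostate)$ for $(\hamiltonian,E,\costate)$ with $\discreteE=U\,\widetilde E\,V\transp=U\Sigma V\transp=E$ and $\discretecostate=U(\hat\costate\circ\varphi^{-1})=U_1\Sigma_1^{-1}(\DG\specified{\widetilde\hamiltonian}\circ\pi_1\circ\varphi^{-1})+U_2(\hat\costate_2\circ\varphi^{-1})$; since $\pi_1\circ\varphi^{-1}(\state)=V_1\transp\state$, this is precisely the pair asserted in (iii) once the composition with $\varphi^{-1}$ is absorbed into the arguments. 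Alternatively, one can check the three conditions of \Cref{def:discGradPair} for this pair directly, using $U_1U_1\transp+U_2U_2\transp=I_n$ for the two consistency conditions and the directionality of $\DG\specified{\widetilde\hamiltonian}$ together with (ii) for the directionality condition.

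The only genuine obstacle is a familiar one: \Cref{def:semi-explicit_gradient_pair}, and the existence of $\specified{\widetilde\hamiltonian}$ via \Cref{lem:semiExplicitGradientPair} and \Cref{lem:specifiedDG}, require the transformed state space to be a product with a convex second factor, whereas $\varphi^{-1}(\statespace)=V\transp\statespace$ need not have this form. As elsewhere in the paper this is handled by restricting to a sufficiently small neighborhood (see \Cref{rem:local} and \Cref{rem:specify}), which is harmless for time discretization. Note also that, unlike the general situation in \Cref{thm:discreteGradientPairChainRule}, no pointwise-invertibility assumption on a discrete Jacobian is needed here, since $\varphi$ (hence $\varphi^{-1}$) is linear and the constant choice $V\transp$ is invertible everywhere.
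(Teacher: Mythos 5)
Your proposal is correct and follows essentially the same route as the paper, whose proof is just the one-line remark that the result follows from \Cref{thm_link1} and \Cref{thm:discreteGradientPairChainRule} by straightforward calculations: you apply \Cref{thm_link1} to the SVD-transformed semi-explicit gradient pair and transport the resulting discrete gradient pair back via \Cref{thm:discreteGradientPairChainRule} with the inverse transformation, which is exactly the intended argument with the computations (and the local product/convexity caveat on the transformed state space) made explicit.
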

\begin{proof}
	Straightforward calculations yield that the result follows from \Cref{thm_link1} and \Cref{thm:discreteGradientPairChainRule}.
\end{proof}

\noindent%
\Cref{prop:discreteGradientPair_constantE} can find its use in several application instances. We demonstrate this in the following example.
\begin{myex}[Multibody system with singular mass matrix]\label{ex_singularM}
	Using \Cref{prop:discreteGradientPair_constantE}, one can design a discrete gradient pair with corresponding integration method for
	a multibody system example with singular mass matrix from \cite[Sec.~5, Ex.~3]{udwadia_2006_explicit} (see also \cite{kinon_2024_conserving}) as depicted in \Cref{fig:federmasse_sketch_massspring_energy}, with masses $m_1$ and $m_2$.
	Due to the presence of redundant coordinates, the descriptor matrix is given by
	$ E =\diag{(I_{3 , 3}, M, 0)}$ with singular but constant mass matrix
	\begin{equation} \label{singular_M}
		M = \begin{bmatrix}
			m_1 &  & 0   &  & 0   \\
			0   &  & m_2 &  & m_2 \\
			0   &  & m_2 &  & m_2
		\end{bmatrix} .
	\end{equation}
	Performing the steps as shown above yields the desired discrete gradient pair. More details can be found in \Cref{appendix_mass_spring}.
\end{myex}
\begin{figure}[bth]
	\centering
	\hspace{4mm}
	\includegraphics{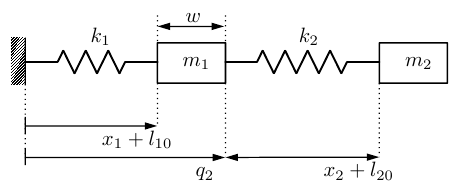}
	\caption{Mass-spring system}
	\label{fig:federmasse_sketch_massspring_energy}
\end{figure}
\noindent In the general case of a state-dependent $E$, one might wonder whether it is possible to transform any pHDAE \eqref{eq:pHDAE} into an equivalent one in the semi-explicit form \eqref{block_pHDAE}.
We start by deducing the following necessary condition:
\begin{theorem} \label{tmp_theorem}
	Let $(E,\costate)$ be a gradient pair for $\hamiltonian$, and suppose that there exists an invertible system transformation $(\varphi,U)$ that maps it into a semi-explicit gradient pair. Then $E$ has constant rank.

\end{theorem}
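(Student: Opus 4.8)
The plan is to reduce the statement to the elementary fact that the rank of a matrix is unchanged under left- or right-multiplication by an invertible matrix, exactly the observation already used in the proof of \Cref{lem:semiExplicitStructurePreserving}.

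First I would unwind the definition of the hypothesis. By assumption the transformed gradient pair $(\widetilde E,\tilde\costate)$, with $\widetilde E = U\transp(E\circ\varphi)\jacobian\varphi$, is semi-explicit; hence there is a splitting $\widetilde\statespace=\widetilde\statespace_1\times\widetilde\statespace_2$ with $\widetilde\statespace_i\subseteq\R^{n_i}$ such that $\widetilde E=\diag(\widetilde E_{11},0)$ with $\widetilde E_{11}\in\cont(\widetilde\statespace,\R^{n_1,n_1})$ pointwise invertible. In particular $\rank\pset[\big]{\widetilde E(\tilde\state)}=n_1$ for every $\tilde\state\in\widetilde\statespace$. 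Next, since $\varphi$ is a diffeomorphism, $\jacobian\varphi(\tilde\state)$ is invertible for every $\tilde\state\in\widetilde\statespace$, and $U(\tilde\state)$ is invertible by definition of a system transformation. Multiplying $E(\varphi(\tilde\state))$ on the left by $U(\tilde\state)\transp$ and on the right by $\jacobian\varphi(\tilde\state)$ therefore does not change its rank, so
\[
	\rank\pset[\big]{E(\varphi(\tilde\state))} = \rank\pset[\big]{U(\tilde\state)\transp\,E(\varphi(\tilde\state))\,\jacobian\varphi(\tilde\state)} = \rank\pset[\big]{\widetilde E(\tilde\state)} = n_1
\]
for every $\tilde\state\in\widetilde\statespace$.

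Finally I would invoke surjectivity of $\varphi\colon\widetilde\statespace\to\statespace$: every $\state\in\statespace$ equals $\varphi(\tilde\state)$ for some $\tilde\state\in\widetilde\statespace$, so $\rank(E(\state))=n_1$ for all $\state\in\statespace$, i.e., $E$ has constant rank, equal to $n_1=\dim\widetilde\statespace_1$. I do not expect any genuine obstacle here; the only points deserving a line of care are that $\jacobian\varphi$ is indeed pointwise invertible (which holds because $\varphi$ is a diffeomorphism, not merely smooth) and that $\varphi$ maps \emph{onto} all of $\statespace$, which is exactly what propagates the constancy of the rank from $\widetilde\statespace$ back to $\statespace$.
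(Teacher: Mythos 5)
Your proposal is correct and coincides with the paper's own argument: both reduce the claim to the invariance of rank under left-multiplication by $U\transp$ and right-multiplication by the pointwise invertible $\jacobian\varphi$, concluding $\rank(E(\state))=\rank(\widetilde E(\tilde\state))=n_1$ for $\tilde\state=\varphi^{-1}(\state)$. The extra care you take about surjectivity of $\varphi$ is exactly what the paper encodes by writing $\tilde\state=\varphi^{-1}(\state)$, so there is no substantive difference.
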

\begin{proof}
	Since $\varphi$ is a diffeomorphism, $\jacobian\varphi$ is pointwise invertible, thus we deduce that
	\[
		\rank \pset[\big]{ E(\state) }
		= \rank\pset[\big]{ U\pset{\tilde\state}\transp E(\state) \jacobian\varphi(\tilde\state) }
		= \rank\pset[\big]{ \widetilde{E}\pset{\tilde\state} } = n_1
	\]
	for all $\state\in\statespace$, where $\tilde\state=\varphi^{-1}(\state)$.
\end{proof}
\noindent We investigate now whether this condition is also sufficient.
A simple extension of \cite[Thm.~3.9]{KunM23} (see \Cref{thm:nullSpaceDec} in the appendix)
shows that $E$ being continuous and constant rank is sufficient to find pointwise unitary $U,V\in\cont(\statespace,\R^{n,n})$ satisfying
\[
	U\transp EV =
	\begin{bmatrix}
		E_{11} & 0 \\ 0 & 0
	\end{bmatrix}
\]
for some pointwise invertible matrix function $E_{11}$, at least locally.
Unfortunately, since such $V$ is not necessarily the Jacobian of a diffeomorphism $\varphi$, we cannot exploit this result directly.
However, we are still able to provide a crucial local canonical form for a gradient pair $(E,\costate)$, as long as $E$ is analytic and has constant rank.=

\begin{theorem}\label{thm:analyticGradientPair}
	Let $(E,\costate)$ be a gradient pair for $\hamiltonian\in\cont^1(\statespace,\R)$, and suppose that $E\in\cont(\statespace,\R^{n,n})$ is analytic and has constant rank. Then $(E,\costate)$ is locally equivalent to a gradient pair $(\widetilde E,\tilde\costate)$ for $\widetilde\hamiltonian\in\cont^1(\widetilde\statespace,\R)$, such that $\widetilde\hamiltonian$ admits a specified function $\specified{\widetilde\hamiltonian}\in\cont^1(\pi_1(\widetilde\statespace),\R)$ (in the sense of \Cref{lem:specifiedDG}), and
	\begin{equation}
		\widetilde E =
		\begin{bmatrix}
			I_p & 0 & 0 \\ 0 & 0 & 0 \\ 0 & E_{32} & I_{r-p}
		\end{bmatrix}, \qquad
		\tilde\costate =
		\begin{bmatrix}
			\gradient\specified{\widetilde\hamiltonian}\circ\pi_1 \\ \costate_2 \\ 0
		\end{bmatrix},
	\end{equation}
	where $\pi_1:\R^n\to\R^p$ denotes the projection onto the first $p$ coordinates.
\end{theorem}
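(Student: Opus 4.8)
The plan is to produce, on a small connected neighbourhood of a fixed $\state_0\in\statespace$, an invertible system transformation $(\varphi,U)$ such that $\widehat E:=U\transp(E\circ\varphi)\jacobian\varphi$ equals the displayed matrix and $\widetilde\hamiltonian:=\hamiltonian\circ\varphi$ depends only on the first $p$ coordinates. Once this is achieved, a specified function $\specified{\widetilde\hamiltonian}\in\cont^1(\pi_1(\widetilde\statespace),\R)$ exists by \Cref{lem:specifiedDG}, and the asserted form of $\tilde\costate=U^{-1}(\costate\circ\varphi)$ is automatic: since $(\widehat E,\tilde\costate)$ is again a gradient pair for $\widetilde\hamiltonian$, we have $\widehat E\transp\tilde\costate=\gradient\widetilde\hamiltonian=(\gradient\specified{\widetilde\hamiltonian}\circ\pi_1,0,0)\transp$, and reading this identity block by block with $\widehat E$ in the canonical form forces $\tilde\costate=(\gradient\specified{\widetilde\hamiltonian}\circ\pi_1,\,\costate_2,\,0)\transp$ with $\costate_2:=\pi_2(\tilde\costate)$ arbitrary. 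So the whole problem reduces to normalising the pair $(E,\hamiltonian)$.

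First I would adapt the coordinates to the Hamiltonian. The gradient-pair identity $\gradient\hamiltonian=E\transp\costate$ says that $\gradient\hamiltonian$ lies pointwise in the row space of $E$, equivalently that $\hamiltonian$ has vanishing directional derivative along $\ker E$; hence $\hamiltonian$ is constant along every analytic vector field with values in $\ker E$, and therefore also along all their iterated Lie brackets. Since $E$ is analytic of constant rank, $\ker E$ is an analytic distribution of rank $n-r$, and (locally near $\state_0$) its involutive closure $\mathcal D\supseteq\ker E$ is a distribution of constant rank $n-p$ which is integrable by the Frobenius theorem; here $p\le r$ because $\mathcal D\supseteq\ker E$. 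This is exactly the point where analyticity is indispensable — see the obstacle below. Straightening $\mathcal D$ by a diffeomorphism brings us to coordinates in which $\mathcal D=\mathrm{span}(\partial_{\state_{p+1}},\dots,\partial_{\state_n})$; then $\hamiltonian$ depends only on $\state_1,\dots,\state_p$ (a $\cont^1$ function of these, by \Cref{lem:specifiedDG}), and, crucially, $\ker E\subseteq\mathcal D=\mathrm{span}(\partial_{\state_{p+1}},\dots,\partial_{\state_n})$.

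Next I would normalise the descriptor matrix. Since $\mathrm{range}(E)$ is an analytic sub-bundle of rank $r$, an analytic left-multiplication built from an analytic frame of $\mathrm{range}(E)$ makes the $n-r$ rows with indices in the middle block $\{p+1,\dots,p+(n-r)\}$ vanish identically, while leaving the other $r$ rows pointwise linearly independent (this neither alters $\ker E$ nor the Hamiltonian). Because $\ker E\subseteq\mathrm{span}(\partial_{\state_{p+1}},\dots,\partial_{\state_n})$, the row space of $E$ contains $\mathrm{span}(e_1,\dots,e_p)$, so a further analytic left-multiplication turns the first $p$ rows into $[\,I_p\mid 0\mid 0\,]$, and elementary row operations against these clear the first $p$ columns of the last $r-p$ rows; what remains is $\widehat E=\bigl[\begin{smallmatrix}I_p&0&0\\0&0&0\\0&B_2&B_3\end{smallmatrix}\bigr]$ with $[\,B_2\mid B_3\,]$ of full row rank $r-p$. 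Permuting the last $n-p$ coordinates (a linear change of variables affecting neither the Hamiltonian nor the earlier normalisations) so that a non-singular $(r-p)\times(r-p)$ sub-block sits in the position of $B_3$ at $\state_0$ — hence, by continuity, on a neighbourhood — and left-multiplying the last block row by its inverse turns $[\,B_2\mid B_3\,]$ into $[\,E_{32}\mid I_{r-p}\,]$, which is precisely the claimed form. Composing all these diffeomorphisms and left-multipliers gives the desired $(\varphi,U)$.

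\textbf{Main obstacle.} The hard part is the coordinate-change step: in contrast to the purely algebraic matrix factorisation available from \Cref{thm:nullSpaceDec}, here the right factor must genuinely be the Jacobian of a diffeomorphism, and $\ker E$ is in general \emph{not} involutive — equivalently, the system need not admit a semi-explicit representation in the sense of \Cref{lem:semiExplicitStructurePreserving} — so $\ker E$ itself cannot be straightened. The analyticity hypothesis is what rescues the argument: it ensures that the involutive closure of $\ker E$ is, locally, again a distribution of constant rank, so that Frobenius applies and one obtains honest coordinates in which $\hamiltonian$ has the required form and $\ker E$ is confined to the last coordinate directions; the residual reduction of the descriptor matrix is then elementary, requiring only left-multiplications and a coordinate permutation, and goes through by continuity. (A secondary point to be handled with care is the behaviour at points where auxiliary ranks drop; near such a point one may need to shrink the neighbourhood or perform a preliminary reduction, but the analytic category keeps the exceptional locus controllable.)
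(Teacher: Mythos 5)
Your overall strategy runs parallel to the paper's: use local first integrals of $\ker E$ as the new leading coordinates, note that $\hamiltonian$ is such a first integral because $\gradient\hamiltonian=E\transp\costate$ lies pointwise in the row space of $E$, and then finish by left multiplications, clearing of blocks, and a permutation of the trailing variables; your block-wise derivation of $\tilde\costate$ is also exactly the paper's final step. The difference is how the first integrals are obtained: the paper first normalizes $E$ pointwise (via the SVD-type result and \Cref{lem:unitary_permutation}) and then invokes the classical theory of complete systems of linear first-order PDEs with analytic coefficients, whereas you propose to straighten the involutive closure $\mathcal D$ of $\ker E$ by the constant-rank Frobenius theorem. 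That step contains a genuine gap.

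The claim that analyticity of $E$ (with $\ker E$ of constant rank) forces $\mathcal D$ to have locally constant rank near every $\state_0$ is false. Take on $\R^3$, with coordinates $(a,b,c)$, the kernel distribution spanned by $X=\partial_a$ and $Y=\partial_b+ac^{2}\partial_c$; for instance $E=e_1 w\transp$ with $w=(0,\,-ac^{2},\,1)\transp$ is analytic of constant rank $1$ with this kernel, and $\costate\equiv 0$, $\hamiltonian\equiv 0$ makes $(E,\costate)$ a gradient pair. Then $[X,Y]=c^{2}\partial_c$ and all further iterated brackets vanish, so $\mathcal D$ has rank $3$ for $c\neq 0$ but rank $2$ on the plane $\set{c=0}$, which is itself an orbit; near any point with $c=0$ no shrinking of the neighbourhood makes the rank constant, so Frobenius cannot be applied to $\mathcal D$ there, and your parenthetical fallback (shrink or "perform a preliminary reduction") does not help, because the theorem has to be proved at every point, including those on the exceptional locus. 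Moreover, at such points your $p$ (the corank of $\mathcal D(\state_0)$) overcounts the available invariants: any first integral must be constant on the nearby open orbits as well, so the number of functionally independent invariants is governed by the generic rank of $\mathcal D$ near $\state_0$, not by its rank at $\state_0$ (in the example there are only constants near a point of $\set{c=0}$, i.e.\ $p=0$, although the pointwise corank is $1$). What analyticity actually provides — via Nagano-type orbit results or the classical complete-systems theory cited in the paper — is a complete set of independent first integrals of this generic corank such that every solution, in particular $\hamiltonian$, is a function of them; this is the statement you need in place of "constant-rank involutive closure plus Frobenius". (A minor additional point: $\hamiltonian$ is only $\cont^1$, so its invariance along iterated brackets should be argued through flows/orbits rather than the identity $[X,Y]f=X(Yf)-Y(Xf)$, which presupposes second derivatives.) With the first-integral step repaired along the paper's lines, the remainder of your reduction of $E$ and of $\tilde\costate$ goes through as you describe.
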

\begin{proof}
	See \Cref{appendix_to_semiexplicit}.
\end{proof}

\noindent The canonical form in \Cref{thm:analyticGradientPair} allows us then to find a canonical form for pHDAEs, which allows to split them into a structured semi-explicit DAE and an additional unstructured DAE.

\begin{corollary}\label{cor:pHDAE_to_SE}
	Consider a pHDAE of the form \eqref{eq:pHDAE}, and suppose that the descriptor matrix $E$ is analytic and has constant rank.
	Then the system is locally equivalent to the combination of a parametrized semi-explicit pHDAE of the form
	\begin{subequations}
		\label{eq:canonicalAndUnstruc}
		\begin{equation}\label{eq:canonicalSemiExplicit}
			\begin{aligned}
				\begin{bmatrix}
					I_{n_1} & 0 \\ 0 & 0
				\end{bmatrix}
				\begin{bmatrix}
					\dot x_1 \\ \dot x_2
				\end{bmatrix}
				  & = \pset[\big]{J(x,\theta)-R(x,\theta)}
				\begin{bmatrix}
					\gradient\specified{\hamiltonian}(x_1) \\ z_2(x,\theta)
				\end{bmatrix}
				+
				\begin{bmatrix}
					B_1(x,\theta) \\ B_2(x,\theta)
				\end{bmatrix}
				u,                                         \\
				y & =
				\begin{bmatrix}
					B_1(x,\theta)\transp & B_2(x,\theta)\transp
				\end{bmatrix}
				\begin{bmatrix}
					\gradient\specified{\hamiltonian}(x_1) \\ z_2(x,\theta)
				\end{bmatrix}
			\end{aligned}
		\end{equation}
		with specified Hamiltonian depending only on $x_1$, and an unstructured DAE for the parameter $\theta$ given by
		\begin{equation} \label{eq:unstruc_part}
			\dot \theta + E_{32}(x,\theta)\dot x_2 = A_{31}(x,\theta)\gradient \specified{\hamiltonian}(x_1) + A_{32}(x,\theta)z_2(x,\theta) + B_3(x,\theta)u,
		\end{equation}
	\end{subequations}
	with state $x=(x_1,x_2)$, up to invertible system transformations.
\end{corollary}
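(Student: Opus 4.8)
The plan is to derive the claimed splitting as a direct consequence of \Cref{thm:analyticGradientPair}. Since $E$ is analytic of constant rank, that theorem provides — on a suitable open neighbourhood, and up to an invertible system transformation $(\varphi,U)$ — an equivalent gradient pair $(\widetilde E,\tilde\costate)$ for $\widetilde\hamiltonian=\hamiltonian\circ\varphi$ of the stated canonical form, together with a specified Hamiltonian $\specified{\widetilde\hamiltonian}$ depending only on the first block of coordinates. Applying $(\varphi,U)$ to the whole pHDAE \eqref{eq:pHDAE}, the transformation rules recalled in \Cref{subsec_systemTransformation} (see \cite[Thm.~1]{mehrmann_2019_structurepreserving}) yield an equivalent pHDAE with descriptor matrix $\widetilde E$, co-state $\tilde\costate$, Hamiltonian $\widetilde\hamiltonian$, and transformed coefficients $\widetilde J=U\transp(J\circ\varphi)U$, $\widetilde R=U\transp(R\circ\varphi)U$, $\widetilde B=U\transp(B\circ\varphi)$, which still satisfy $\widetilde J=-\widetilde J\transp$ and $\widetilde R=\widetilde R\transp\succeq 0$ pointwise.

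Next I would partition the new state as $(\state_1,\state_2,\theta)$ according to the three block rows of $\widetilde E$, so that $\state_1\in\R^{n_1}$ carries the identity block $I_{n_1}$, $\state_2\in\R^{n_2}$ the vanishing middle block, and $\theta$ the row $[0,E_{32},I_{r-p}]$; correspondingly I write $\widetilde J-\widetilde R=(A_{ij})_{i,j=1}^3$ and $\widetilde B=(\widetilde B_i)_{i=1}^3$ in the same block pattern, all coefficients being functions of $(\state_1,\state_2,\theta)$. The key observation is that the third component of $\tilde\costate$ vanishes, so the columns $A_{i3}$ never contribute, and the transformed dynamics and output read (with arguments suppressed)
\begin{align*}
	\dot\state_1 &= A_{11}\gradient\specified{\widetilde\hamiltonian}(\state_1) + A_{12}\costate_2 + \widetilde B_1 u,\\
	0 &= A_{21}\gradient\specified{\widetilde\hamiltonian}(\state_1) + A_{22}\costate_2 + \widetilde B_2 u,\\
	\dot\theta + E_{32}\dot\state_2 &= A_{31}\gradient\specified{\widetilde\hamiltonian}(\state_1) + A_{32}\costate_2 + \widetilde B_3 u,\\
	y &= \widetilde B_1\transp\gradient\specified{\widetilde\hamiltonian}(\state_1) + \widetilde B_2\transp\costate_2.
\end{align*}
Grouping the first two rows under the descriptor $\diag(I_{n_1},0)$ gives exactly \eqref{eq:canonicalSemiExplicit} upon setting $J(\state,\theta)-R(\state,\theta)=\left[\begin{smallmatrix}A_{11}&A_{12}\\A_{21}&A_{22}\end{smallmatrix}\right]$, $B_1=\widetilde B_1$, $B_2=\widetilde B_2$, and $z_2(\state,\theta)=\costate_2$, together with the output equation just displayed; the third row is precisely the unstructured equation \eqref{eq:unstruc_part} with the same $A_{31}$, $A_{32}$, $E_{32}$ and $B_3=\widetilde B_3$.

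It then remains to verify that \eqref{eq:canonicalSemiExplicit} is a genuine parametrized semi-explicit pHDAE in the sense of \Cref{def:semiexp_pHDAE}: the leading $(n_1+n_2)\times(n_1+n_2)$ principal submatrix of $\widetilde J$ is skew-symmetric and that of $\widetilde R$ is symmetric positive semidefinite (a principal submatrix of a positive semidefinite matrix being positive semidefinite), so $\left[\begin{smallmatrix}A_{11}&A_{12}\\A_{21}&A_{22}\end{smallmatrix}\right]$ decomposes into a skew-symmetric minus a symmetric positive semidefinite part; the descriptor is $\diag(I_{n_1},0)$ with invertible $(1,1)$ block; by \Cref{rem:local} we may, after further shrinking the neighbourhood, assume the $\state_2$-domain is convex; and the existence of $\specified{\widetilde\hamiltonian}$ together with the fact that it depends only on $\state_1$ is furnished directly by \Cref{thm:analyticGradientPair}. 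Relabelling $\state=(\state_1,\state_2)$ yields the asserted local decomposition, valid up to the invertible system transformation $(\varphi,U)$. The only substantial input is \Cref{thm:analyticGradientPair} itself (whose proof is deferred to the appendix); for the corollary the main care is the block bookkeeping above and the elementary fact that the port-Hamiltonian structure descends to the leading principal sub-block once the third co-state component is known to vanish.
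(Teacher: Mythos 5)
Your proposal is correct and follows essentially the same route as the paper: invoke \Cref{thm:analyticGradientPair} to reach the canonical gradient pair, block-partition $J-R$ and $B$, use $z_3=0$ to drop the third block column, group the first two block rows as the parametrized semi-explicit pHDAE and the third row as the unstructured equation, and relabel $\theta=x_3$ with a final shrinking of the neighbourhood for convexity. The only difference is cosmetic: you spell out explicitly that the leading principal blocks of $\widetilde J$ and $\widetilde R$ inherit skew-symmetry and positive semidefiniteness, a point the paper leaves implicit.
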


\begin{proof}
	Because of \Cref{thm:analyticGradientPair} we can assume, up to restricting the state space to an appropriate open neighborhood and applying a certain invertible system transformation, that $\hamiltonian$ admits a specified Hamiltonian $\specified{\hamiltonian}\in\cont^1(\pi_1(\statespace),\R)$ and
	\[
		E =
		\begin{bmatrix}
			I_p & 0 & 0 \\ 0 & 0 & 0 \\ 0 & E_{32} & I_{r-p}
		\end{bmatrix}, \qquad
		\costate =
		\begin{bmatrix}
			\gradient\specified\hamiltonian\circ\pi_1 \\ \costate_2 \\ 0
		\end{bmatrix}.
	\]
	The pHDAE then can be written as
	\begin{align*}
		\begin{bmatrix}
			\dot x_1 \\ 0 \\ E_{32}(x)\dot x_2 + \dot x_3
		\end{bmatrix}
		  & =
		\begin{bmatrix}
			J_{11}(x)-R_{11}(x) & J_{12}(x)-R_{12}(x) & J_{13}(x)-R_{13}(x) \\
			J_{21}(x)-R_{21}(x) & J_{22}(x)-R_{22}(x) & J_{23}(x)-R_{23}(x) \\
			J_{31}(x)-R_{31}(x) & J_{32}(x)-R_{32}(x) & J_{33}(x)-R_{33}(x)
		\end{bmatrix}
		\begin{bmatrix}
			\gradient\specified{\hamiltonian}(x_1) \\ z_2(x) \\ 0
		\end{bmatrix}
		+
		\begin{bmatrix}
			B_1(x) \\ B_2(x) \\ B_3(x)
		\end{bmatrix}
		u,    \\
		y & =
		\begin{bmatrix}
			B_1(x)\transp & B_2(x)\transp & B_3(x)\transp
		\end{bmatrix}
		\begin{bmatrix}
			\gradient\specified{\hamiltonian}(x_1) \\ z_2(x) \\ 0
		\end{bmatrix}.
	\end{align*}
	Note that, since $z_3=0$, the third block column of $J-R$ can be arbitrarily modified without affecting the solutions of the system.
	Therefore, $A_{31}=J_{31}-R_{31}$ and $A_{32}=J_{32}-R_{32}$ are actually unstructured, and the system can be equivalently interpreted as \eqref{eq:canonicalAndUnstruc}, up to relabeling $\state\coloneqq(\state_1,\state_2)$ and $\theta\coloneqq\state_3$.

	We finally note that, up to restricting $\statespace$ further, we can assume that it is an open convex of the form $\statespace_1\times\statespace_2\times\statespace_3$ with $\statespace_1\subseteq\R^{n_1}$, $\statespace_2\subseteq\R^{n_2}$, and $\statespace_3\subseteq\R^{n_3}$.
\end{proof}

\begin{remark}

	In the proof of \Cref{cor:pHDAE_to_SE} we reinterpreted part of the state as a time-varying parameter.
	While this choice might seem arbitrary, it allows us to highlight and exploit the underlying semi-explicit pHDAE structure. In fact,
	while the full system \eqref{eq:canonicalAndUnstruc} is not a semi-explicit pHDAE, the fact that the subsystem \eqref{eq:canonicalSemiExplicit} is a parametrized semi-explicit pHDAE allows to apply a structure-preserving time-discretization scheme by approximating $\gradient\specified{\hamiltonian}(x_1)$ by a corresponding discrete gradient.
	Additionally, one is free to choose an approximation of the unstructured part \eqref{eq:unstruc_part}, which however does not spoil the discrete time power balance equation.
	It should be emphasized that the derivation of the system \eqref{eq:canonicalAndUnstruc} requires a suitable system transformation, which may be difficult to obtain in practice.
\end{remark}

\noindent While we leave further in-depth analyses for future research, we subsequently highlight the  applicability of our proposed approach to a mechanical problem class.

\section{Application to multibody system dynamics}\label{sec_examples}

\noindent Let us consider the example of nonlinear and constrained multibody systems (see \Cref{ex_mbs_shorter}). We discuss the modeling as a semi-explicit pHDAE in \Cref{modelling_mbs}, showcase the application of a discrete gradient method in \Cref{sec_mbs_integrator} and present a numerical experiment in \Cref{sec_mbs_experiment}.

\subsection{Modeling multibody systems as semi-explicit pHDAEs} \label{modelling_mbs}

\noindent The class of nonlinear multibody systems with redundant coordinates $q \in \cont (\timeinterval,\mathcal{Q})$ fits well into the semi-explicit framework \eqref{block_pHDAE}. More details on the derivations of the following equations may be found for example in the textbook \cite[Ch.~1]{holm_2009_geometric}.
The configuration space $\mathcal{Q}$ is typically a differential manifold, but it can also be regarded as an open subset of $\R^d$ up to switching to local coordinates, where the dimension $d$ of $\mathcal{Q}$ determines the number of coordinates.
Correspondingly, admissible velocities $v=\dot{q}$ are elements of the tangent space $T_q\mathcal{Q}$ defined through the presence of holonomic constraints $g\in\cont^1(\mathcal{Q},\R^m)$, and can be reinterpreted in local coordinates as vectors in $\R^d$.
Since
\begin{align} \label{eq_pos_constraint}
	g \pset[\big]{ q(t) } = 0
\end{align}
gives rise to the kinematic (i.e.~velocity level) constraints (sometimes in the MBS community referred to as \textit{hidden constraints}), admissible velocities need to satisfy
\begin{align}\label{eq_vel_constraint}
	\jacobian g \pset[\big]{ q(t) } v(t) = 0 .
\end{align}
These constraints are enforced by means of Lagrange multipliers $\lambda \in \cont(\timeinterval, \R^m)$, which now represent the purely algebraic states, i.e., $\state_2 = \lambda$. Correspondingly, with $\state_1 = (q,v)$ one defines the non-quadratic Hamiltonian as
\begin{align} \label{hamiltonian_mbs_detail}
	\hamiltonian(\state) =\specified{\hamiltonian}(\state_1) = \frac{1}{2} v\transp M v + V(q),
\end{align}
where the first term represents the kinetic energy with the symmetric and positive-definite mass matrix $M \in \R^{d,d}$ and $V \in \cont(\mathcal{Q},\R)$
denotes an arbitrary potential energy. The emanating potential forces are derived by taking the partial derivative with respect to the coordinates, i.e., $F_{\mathrm{p}} =  \gradient V(q)$. Additionally, we consider velocity-dependent viscous dissipation governed by the Rayleigh dissipation function $G(q,v) = \frac{1}{2} v\transp R_{\mathrm{R}}(q) v$, where $R_{\mathrm{R}}(q) \in \cont(\mathcal{Q},\R^{d,d})$ is a symmetric and positive semi-definit dissipation matrix. The non-potential forces appearing in the balance of linear momentum are obtained through differentiation, i.e., $F_{\mathrm{np}}(q,v) = - \nabla_{v} G(q,v) = - R_{\mathrm{R}}(q) v $. This eventually yields the equations of motion as index-2 DAEs given by
\begin{subequations}
	\begin{align}
		\dot{q}  & = v                            ,                                               \\
		M \dot v & = - \gradient V(q) - R_{\mathrm{R}}(q) v - \jacobian g(q)\transp\lambda + u  , \\
		0        & = \jacobian g(q) v ,
	\end{align}
\end{subequations}
where $u$ represents external input loads.
These equations can be brought into the semi-explicit pHDAE representation \eqref{block_pHDAE} by rewriting them as
\begin{subequations}
	\begin{align} \label{block_MBS_detailled}
		\begin{bmatrix}
			I & 0 & 0 \\ 0 & M & 0 \\ 0 & 0 & 0
		\end{bmatrix} \begin{bmatrix}
			              \dot{q} \\ \dot{v} \\ \dot{\lambda}
		              \end{bmatrix} & = \left( \begin{bmatrix}
				                                       0  & I                  & 0                      \\
				                                       -I & -R_{\mathrm{R}}(q) & -\jacobian g(q)\transp \\ 0 & \jacobian g(q) & 0
			                                       \end{bmatrix} \right) \begin{bmatrix}
			                                                             \gradient V(q) \\ v \\ \lambda
		                                                             \end{bmatrix} + \begin{bmatrix}
			                                                                             0 \\ I \\ 0
		                                                                             \end{bmatrix} u , \\
		y                                   & = \begin{bmatrix}
			                                        0 & I & 0
		                                        \end{bmatrix} \begin{bmatrix}
			                                                      \gradient V(q) \\ v \\ \lambda
		                                                      \end{bmatrix} .
	\end{align}
\end{subequations}
The verification that $E\transp\costate(\state) = \gradient \hamiltonian(\state)$ holds true is straightforward. Moreover, the system output collocated with the input forces coincides with the velocity, i.e.,~$y=v$.

Note that the pH formulation of the multibody system dynamics is characterized by explicitely accounting for the hidden velocity constraints \eqref{eq_vel_constraint} instead of the constraints on position level \eqref{eq_pos_constraint}. Care has to be taken when it comes to the numerical discretization in order to avoid the violation of the constraints on position level during simulations (\textit{drift-off}).

\subsection{Structure-preserving time integration of multibody systems} \label{sec_mbs_integrator}

\noindent For the time discretization of \eqref{block_MBS_detailled}, we propose the application of the semi-explicit discrete gradient method \eqref{block_pHDAE_timestepping_2} with additional specifications, leading to the discrete time mapping
\begin{subequations}
	\label{MBS_integrator_DG}
	\begin{align}
		\begin{bmatrix}
			I & 0 & 0 \\ 0 & M & 0 \\ 0 & 0 & 0
		\end{bmatrix} \begin{bmatrix}
			              {q}\none - {q}\n \\ {v}\none - {v}\n \\ {\lambda}\none -{\lambda}\n
		              \end{bmatrix} & = h \begin{bmatrix}
			                                  0  & I                                   & 0                                           \\
			                                  -I & -R_{\mathrm{R}}({q}\nonehalf)       & -\discretejacobian g({q}\n,{q}\none)\transp \\
			                                  0  & \discretejacobian g({q}\n,{q}\none) & 0
		                                  \end{bmatrix} \begin{bmatrix}
			                                                \discretecostateoneq \\ \discretecostateonev \\ \discretetimestep{\lambda}
		                                                \end{bmatrix} + h \begin{bmatrix}
			                                                                  0 \\ I \\ 0
		                                                                  \end{bmatrix} \discreteinput , \label{MBS_integrator_DG_a} \\
		\begin{bmatrix}
			\discretecostateoneq \\ \discretecostateonev
		\end{bmatrix}                        & =  \begin{bmatrix}
			                                          I & 0 \\ 0 & M
		                                          \end{bmatrix}\ntransp  \DG \specified{\hamiltonian}(x_1\n,x_1\none) ,
		\\
		\discreteoutput                                                     & = \discretecostateonev ,
	\end{align}
\end{subequations}
where $q\nonehalf=\frac{1}{2}(q\n+q\none)$ is the midpoint, and $\discretejacobian{g}$ is a discrete Jacobian for $g$.
Here we discretized the Rayleigh dissipation term using the implicit midpoint rule, but we emphasize that any other consistent approximation which preserves the positive semi-definiteness of $R_{\mathrm{R}}$ would be suitable as well.
For the multipliers we make the choice $\discretetimestep{\lambda} := \lambda\none$ such that no appropriate initialization for $\lambda^0$ is required.

As already mentioned, one might wonder about the drift-off effect.
Since we approximate $\jacobian{g}$ with a discrete Jacobian, combining
the first and third row of \eqref{MBS_integrator_DG_a} yields
\begin{align} \label{eq_no_drift}
	g(q\none) - g(q\n)
	= \discretejacobian g(q\n,q\none)(q\none-q\n)
	= h \discretejacobian g(q\n,q\none) \discretecostateonev
	= 0,
\end{align}
and therefore the drift-off vanishes, as long as the initial condition satisfies $g(q^0)=0$.
Thus,%
this scheme not only yields energy consistency in terms of \Cref{theorem_block_energy}, but also prevents the drift-off effect.

\begin{remark}\label{rem_velocity_constraint_discrete}
	The choice of using a discrete Jacobian to approximate $\jacobian{g}$ in general only guarantees that the velocity constraint \eqref{eq_vel_constraint} itself is satisfied approximately. For an energy-consistent multibody system integrator, which captures constraints both on position and on velocity level exactly, the interested reader is referred to \cite[Sec.~5]{kinon_2023_structurepreserving}.
\end{remark}

\begin{remark}\label{rem_gequiv}
	For lossless systems, one might be additionally interested in preserving momentum maps, like the angular momentum. This can be achieved by using \emph{G-equivariant} discrete gradients, see \cite[Ch.~3.7]{gonzalez_1996_time}.
\end{remark}

\begin{figure}[b]
	\centering
	\includegraphics{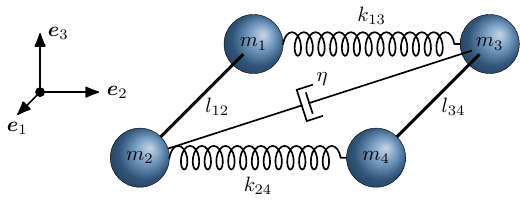}
	\caption{Four-particle system}
	\label{fig:4Psystem_sketch}
\end{figure}

\subsection{Numerical experiment} \label{sec_mbs_experiment}
\subsubsection{Problem description}

\noindent The four-particle system depicted in \Cref{fig:4Psystem_sketch} has been adapted from the literature \cite{gonzalez_1999_mechanical,kinon_2023_structurepreserving} and extended to include dissipation. The configuration of the system is characterized by the coordinate vector $q=(q_1,q_2,q_3,q_4) \in \R^{12}$ comprising the Cartesian coordinates of four masses $m_i$, $i=1,...,4$ in three dimensions.
Two nonlinear springs give rise to the potential function
\begin{equation} \label{eq:potential-4mass-system}
	V(q) = \frac{1}{2} k_{13} \pset[\big]{ \norm{q_3 - q_1}^2 - 1 }^2
	+ \frac{1}{2} k_{24} \pset[\big]{ \norm{q_4 - q_2}^2 - 1 }^2 ,
\end{equation}
with the spring stiffness parameters $k_{13}$ and $k_{24}$.
The mass matrix is block diagonal, i.e. $M = \diag\{m_1 I, m_2 I, m_3 I, m_4 I\}$. Additionally, we consider configuration-dependent viscous dissipation in terms of the Rayleigh dissipation function
\begin{align}
	G(q,v) = \frac{1}{2} \eta(q) v_{\mathrm{rel}}^2, \qquad
	v_{\mathrm{rel}} = \norm{v_3 -v_2},
\end{align}
where $\eta(q) = \eta_0 (1 + \alpha q_{\mathrm{rel}}^2) \geq 0$ is the dynamic viscosity parameter and
$q_{\mathrm{rel}}=\norm{q_3-q_2}$.
We have also introduced $\eta_0 >0 $ and $\alpha >0$ as constant parameters.
This leads to the dissipation matrix
\begin{align}
	R_{\mathrm{R}}(q) & = \eta(q) \begin{bmatrix}
		                              0 & 0  & 0  & 0 \\
		                              0 & I  & -I & 0 \\
		                              0 & -I & I  & 0 \\
		                              0 & 0  & 0  & 0
	                              \end{bmatrix}.
\end{align}
There are two rigid bars  connecting two masses, respectively, leading to the constraints on position level given by 
\begin{equation} \label{ex_constraints}
	g_1(q) = \frac{1}{2}\pset[\big]{ \norm{q_2-q_1}^2 - 1 } =0, \qquad
	g_2(q) = \frac{1}{2}\pset[\big]{ \norm{q_4-q_3}^2 - 1 } =0.
\end{equation}
In the numerical simulations the initial conditions
\begin{equation}
	\begin{aligned}
		{q}_1^0 & = \begin{bmatrix} 0 , 0 , 0 \end{bmatrix}\transp  \ , \ {q}_2^0 = \begin{bmatrix} 1 , 0 , 0 \end{bmatrix}\transp  \ , \  {q}_3^0 = \begin{bmatrix} 0 , 1 , 0 \end{bmatrix}\transp  \  , \  {q}_4^0 = \begin{bmatrix} 1 , 1 , 0 \end{bmatrix}\transp  ,          \\
		{v}_1^0 & = \begin{bmatrix} 0 , 0 , 0 \end{bmatrix}\transp \ , \ {v}_2^0 = \begin{bmatrix} 0 , 0 , 0 \end{bmatrix}\transp  \ , \  {v}_3^0 = \begin{bmatrix} 0 , 0 , 0 \end{bmatrix}\transp  \ , \ {v}_4^0 = \begin{bmatrix} 0 , 0 , \frac{20}{17} \end{bmatrix}\transp  ,
	\end{aligned}
\end{equation}
have been chosen consistently with the constraints \eqref{ex_constraints} and their velocity level counterparts induced by \eqref{eq_vel_constraint}.

\subsubsection{Methods}

\noindent The simulations have been conducted using our discrete gradient scheme for semi-explicit systems \eqref{block_pHDAE_timestepping_2}, implemented as suggested in \eqref{MBS_integrator_DG}. Results obtained with this scheme are labeled ``DG'' . The equations have been solved in each time step using Newton's method with a tolerance of $\epsilon_{\mathrm{Newton}}$. For the discrete gradients and Jacobians, we use the Gonzalez discrete gradient \eqref{eq:DD-Gonzalez}.
Since the constraints \eqref{ex_constraints} are quadratic, the application of the Gonzales discrete Jacobian boils down to a midpoint evaluation. In this example we assume zero inputs.

The generated data along with the source code for the simulations are openly available for verification purposes in the repository \url{https://github.com/plkinon/phdae_discrete_gradients} and are archived at \cite{kinon_2025_15007242}.

\subsubsection{Results \& Discussion}

\begin{table}[t]
	\renewcommand{\arraystretch}{1.2}
	\begin{center}
		\begin{tabular}{l l l l l l l}
			\toprule
			$h$    & $\finaltime$ & $\epsilon_{\mathrm{Newton}}$ & $\{k_{13}, k_{24}\}$ & $m_i$             & $\eta_0$ & $\alpha$ \\
			\midrule
			$0.01$ & $10$         & $10^{-10}$                   & $\{ 50, 500 \}$           & $\{1,3,2.3,1.7\}$ & $1$      & $0.5$    \\
			\bottomrule
		\end{tabular}
		\caption{Simulation parameters for four-particle system.}
		\label{tab_4Psystem}
	\end{center}
\end{table}

\begin{figure}[t]
	\noindent%
	\begin{minipage}{.29\textwidth}
		\includegraphics{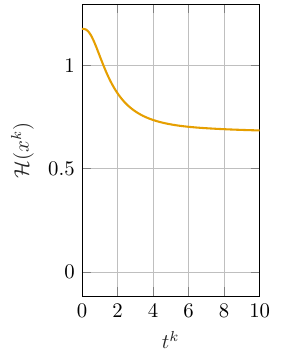}%
	\end{minipage}%
	\begin{minipage}{.39\textwidth}
		\vspace*{-4.3mm}
		\includegraphics{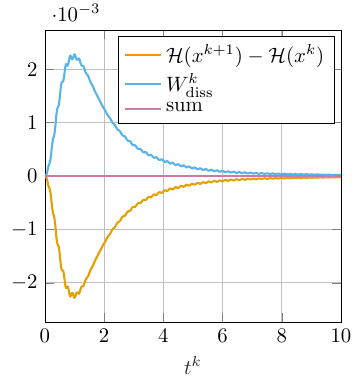}%
	\end{minipage}%
	\begin{minipage}{.32\textwidth}
		\vspace*{-3mm}
		\includegraphics{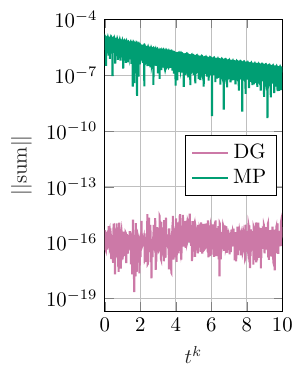}%
	\end{minipage}%
	\caption{Hamiltonian evolution (left), increments (center) and comparison with midpoint approach (right). \lq\lq DG\rq\rq denotes our approach and \lq\lq MP\rq\rq is the midpoint scheme.}
	\label{fig_energy_with_dissipation}
\end{figure}

\noindent We simulate the four-particle system using the parameters comprised in \Cref{tab_4Psystem}.
On the left part of \Cref{fig_energy_with_dissipation} one can observe the discrete evolution of the Hamiltonian in time.
The exact representation of the power balance in discrete time  (see \eqref{MBS_integrator_DG} and \Cref{theorem_block_energy}) is demonstrated in the central part of \Cref{fig_energy_with_dissipation}, since the Hamiltonian increments are always less or equal to zero and the dissipated work in each time step
\begin{equation}
	\discretedissipation :=  h \begin{bmatrix}  \discretecostateone \\ \discretecostate_2(\discretestate\n,\discretestate\none) \end{bmatrix} \transp\discreteR(\discretestate\n,\discretestate\none)  \begin{bmatrix} \discretecostateone \\ \discretecostate_2(\discretestate\n,\discretestate\none)\end{bmatrix} = h \discretecostateonev\transp R_{\mathrm{R}}({q}\nonehalf) \discretecostateonev \geq 0
\end{equation}
is equally large.
The sum of the two terms is numerically zero. 
For comparison, a pure midpoint-based scheme (labeled ``MP'') does not achieve energy-consistency, as depicted on the right part of \Cref{fig_energy_with_dissipation}.

On the left side of \Cref{fig_constraints_with_dissipation} one can observe that the scheme under investigation does not suffer from drift-off, i.e., it accurately captures the constraints on position level \eqref{ex_constraints}, as expected from \eqref{eq_no_drift}. On the right side of the same figure, the kinematic constraint is shown to have order of magnitude of $ 10^{-4}$ for each discrete point in time, due to the intermediate approximation of \eqref{eq_vel_constraint}, as discussed in Remark~\eqref{rem_velocity_constraint_discrete}.
Next, we switch off viscous dissipation by setting $\eta_0=0$ in the dissipation law. The discrete-time energy conservation in the non-dissipative case is verified in \Cref{fig_energy_without_dissipation}.

\begin{figure}[t]
	\begin{minipage}{.45\textwidth}
		\centering
		\includegraphics{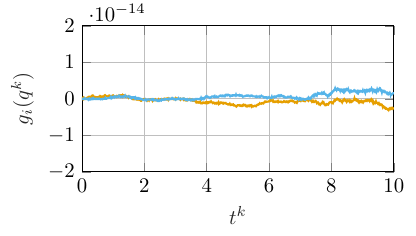}
	\end{minipage}%
	\begin{minipage}{.45\textwidth}
		\centering
		\includegraphics{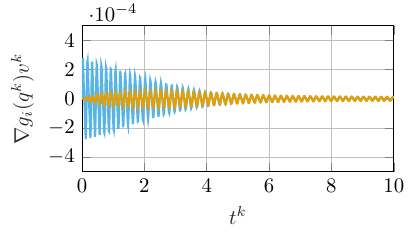}
	\end{minipage}%
	\caption{Constraints on position level (left) and on velocity level (right); \textcolor{color1}{\rule[2.5pt]{15pt}{1pt}} $i=1$, \textcolor{color2}{\rule[2.5pt]{15pt}{1pt}} $i=2$.
	}
	\label{fig_constraints_with_dissipation}
\end{figure}

\begin{figure}[b]
	\begin{minipage}{.45\textwidth}
		\centering
		\includegraphics{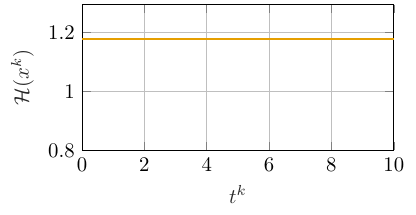}
	\end{minipage}
	\begin{minipage}{.45\textwidth}
		\centering
		\vspace*{-4.3mm}
		\includegraphics{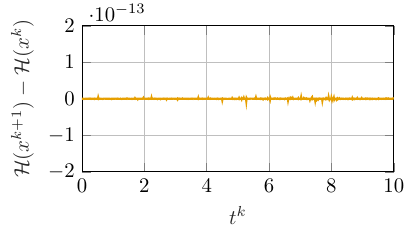}
	\end{minipage}
	\caption{Hamiltonian evolution (left) and increments (right) without dissipation, i.e., $\eta_0 = 0$.}
	\label{fig_energy_without_dissipation}
\end{figure}

\begin{figure}[t]
	\noindent%
	\begin{minipage}{.63\textwidth}
		\centering
		\includegraphics{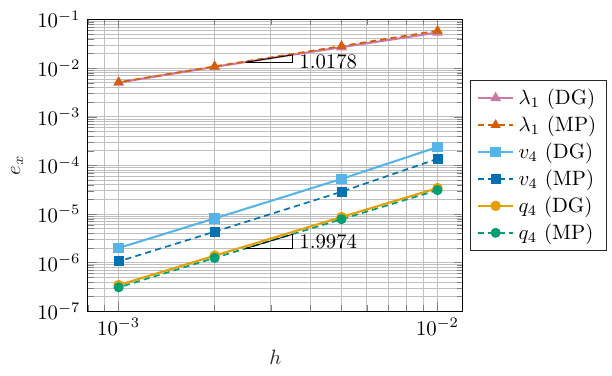}
	\end{minipage}%
	\begin{minipage}{.37\textwidth}
		\centering
		\includegraphics{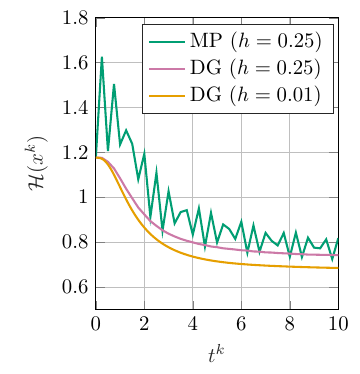}
	\end{minipage}%
	\caption{$h$-convergence (left) and robustness comparison with larger $h$ (right).}
	\label{fig_hconvergence_blowup}
\end{figure}

We have also performed a numerical convergence analysis (see \Cref{fig_hconvergence_blowup}, left side) using the relative error measure
\begin{equation*}
	e_x = \frac{|| x_{\mathrm{ref}} - x||}{|| x_{\mathrm{ref}}||} ,
\end{equation*}
where $x \in \{{q}_4\n, {v}_4\n, \discretetimestep{\lambda_1}\}$ are the solutions evaluated at $t\n = 0.1$ for different time step sizes and methods. The respective reference solution $x_{\mathrm{ref}}$ was obtained using our DG method with $h=10^{-4}$. The scheme exhibits second order accuracy for the differential unknowns of position and velocity and approximately a first order convergence behavior in the Lagrange multiplier.
It should be noted that the order of convergence can be affected by the discretization method used for the equation coefficients.
The compared midpoint scheme MP yields similar convergence results.

Lastly, we investigated the numerical robustness of the proposed DG scheme compared to the MP scheme when choosing larger time step sizes. Both methods did not converge with time step sizes of $0.4$ or larger. However, our DG method provided physically meaningful results up until $h=0.25$, showing a qualitatively similar behavior as with the previous simulation
(see \Cref{fig_hconvergence_blowup}, right side). Contrarily to that, MP exhibits occasional total energy increase, thus violating the dissipativity of the system.
This is in accordance with literature showing that discrete gradient methods are relatively robust compared to the midpoint rule in the nonlinear regime, see e.g.~\cite{gonzalez_1996_stability,kinon_2023_structurepreserving}.

\section{Conclusion}\label{sec_Conclusion}

\noindent In this work, we introduced discrete gradient methods for port-Hamiltonian differential-algebraic equations (pHDAEs), addressing the challenges associated with state-dependent and non-invertible descriptor matrices. We developed a promising time integration method for semi-explicit systems, discussed more general pHDAEs, and explored a method based on an alternative representation of pHDAEs. Additionally, we outlined conditions for constructing discrete gradient pairs for general pHDAEs, analyzed state transformations and the equivalence of different methods.
In particular we proved that, under appropriate regularity assumptions, every pHDAE can be reinterpreted as the combination of a parametrized semi-explicit pHDAE and an unstructured DAE on the time-varying parameter.
Lastly, we applied the proposed framework to the important application case of nonlinear multibody system dynamics, providing convincing simulation results.

Future research could focus on refining the conditions needed to apply discrete gradients to general pHDAEs, improving the numerical efficiency and accuracy. It will be of major interest to apply our framework to large-scale and multiphysics systems, as their modeling is seamlessly possible in the pH framework.
Moreover, a rigorous convergence analysis of the time discretization schemes presented in this paper should be pursued in the future.

\newenvironment{authcontrib}[1]{%
	\subsection*{\textnormal{\textbf{Author Contributions}}}%
	\noindent #1}%
{}%
\newenvironment{acks}[1]{%
	\subsection*{\textnormal{\textbf{Acknowledgements}}}%
	\noindent #1}%
{}%
\newenvironment{funding}[1]{%
	\subsection*{\textnormal{\textbf{Funding}}}%
	\noindent #1}%
{}%
\newenvironment{dci}[1]{%
	\subsection*{\textnormal{\textbf{Declaration of conflicting interests}}}%
	\noindent #1}%
{}%
\newenvironment{code}[1]{%
	\subsection*{\textnormal{\textbf{Code}}}%
	\noindent #1}%
{}%

\begin{acks}
	R. Morandin is funded by the Deutsche Forschungsgemeinschaft (DFG, German Research Foundation) -- 446856041.
	P.~L. Kinon gratefully acknowledges funding by the Research Travel Grant of the Karlsruhe House of Young
	Scientists (KHYS).
\end{acks}
\begin{dci}
	The authors declare no conflict of interest.
\end{dci}
\begin{code}
	The integration methods as well as the definition of the system have been implemented in the openly available Python package \texttt{pydykit}, which can be found at
	\url{https://github.com/pydykit/pydykit}
	and is archived under \cite{kinon_2025_pydykit}. The data generated for this work and the underlying source code for the simulations can be found in the repository
	\url{https://github.com/plkinon/phdae_discrete_gradients} and are archived at \cite{kinon_2025_15007242}.

\end{code}

\printbibliography

\appendix
\crefalias{section}{appendix}
\section*{Appendix}
\section[Proof details for Lemma~\ref{lem:specifiedDG}]{Proof details for \Cref{lem:specifiedDG}}
\label{appendix_lemma2_7}

\noindent To prove the first claim in \Cref{lem:specifiedDG}, we consider a fixed $\hat{\state}_2\in\statespace_2$ and define $\specified{f}:\statespace_1\to\R,\ x_1\mapsto f(x_1,\hat{\state}_2)$.
Since $\statespace_{2}$ is convex, we deduce that for every fixed $(x_1,x_2)\in\statespace$ the map
\[
	\hat f:[0,1]\to\R,\qquad s\mapsto f\pset[\big]{ x_1 , sx_2 + (1-s)\hat{\state}_2}
\]
is well-defined, continuously differentiable, and its derivative satisfies
\[
	\frac{\mathrm{d}\hat f}{\mathrm{d}s}(s) = \gradient_{x_2}f\pset[\big]{ x_1 , sx_2 + (1-s)\hat{\state}_2} \transp\pset[\big]{x_2-\hat{\state}_2} = 0
\]
for all $s\in[0,1]$, i.e., $\hat f$ is constant.
Thus,
\[
	f(x_1,x_2) = \hat f(1) = \hat f(0) = f\pset[\big]{x_1,\hat{\state}_2} = \specified{f}(x_1).
\]
We observe that for every $x_1\in\statespace_1$ and sufficiently small $h\in\R^{n_1}$ it holds that $(x_1+h,\hat{\state}_2)\in\statespace$, since $(x_1,\hat{\state}_2)\in\statespace$ and $\statespace$ is an open set. In particular, we can write
\[
	\specified{f}(x_1 + h) - \specified{f}(x_1) = f\pset[\big]{x_1+h,\hat{\state}_2} - f\pset[\big]{x_1,\hat{\state}_2}
\]
for every $x_1\in\statespace_1$ and every $h\in\R^{n_1}$ of appropriately bounded norm, from which we immediately deduce that $\specified{f}$ is continuously differentiable and $\gradient\specified{f}(x_1)=\gradient_{x_1}f(x_1,x_2)$ for every $(x_1,x_2)\in\statespace$.
The second claim of \Cref{lem:specifiedDG} is proven in the main part of the manuscript.

\section{Details on the synchronous machine} \label{appendix_synchro}

\noindent Another example for the present framework is the synchronous machine, see \Cref{ex_synchro}. The pH system, as shown in \cite{fiaz_2013_porthamiltonian} reads
\begin{subequations}
	\begin{align}\label{eq:syncMach_dyn}
		\begin{bmatrix}
			\dot\psi_s \\ \dot\psi_r \\ \dot p \\ \dot\theta
		\end{bmatrix}
		 & =
		\begin{bmatrix}
			-R_s & 0 & 0 & 0 \\ 0 & -R_r & 0 & 0 \\ 0 & 0 & -d & -1 \\ 0 & 0 & 1 & 0
		\end{bmatrix}
		\gradient\widetilde{\hamiltonian}(\psi_s,\psi_r,p,\theta) +
		\begin{bmatrix}
			I_3 & 0 & 0 \\ 0 & e_1 & 0 \\ 0 & 0 & 1 \\ 0 & 0 & 0
		\end{bmatrix}
		\begin{bmatrix}
			V_s \\ V_f \\ \tau
		\end{bmatrix}, \\ \label{eq:syncMach_out}
		\begin{bmatrix}
			I_s \\ I_f \\ \omega
		\end{bmatrix}
		 & =
		\begin{bmatrix}
			I_3 & 0 & 0 & 0 \\ 0 & e_1^\top & 0 & 0 \\ 0 & 0 & 1 & 0
		\end{bmatrix}
		\gradient\widetilde{\hamiltonian}(\psi_s,\psi_r,p,\theta),
	\end{align}
\end{subequations}
with Hamiltonian
\begin{equation}
	\widetilde{\hamiltonian}(\psi_s,\psi_r,p,\theta) = \frac{1}{2}
	\begin{bmatrix}
		\psi_s \\ \psi_r
	\end{bmatrix}^\top
	L(\theta)^{-1}
	\begin{bmatrix}
		\psi_s \\ \psi_r
	\end{bmatrix}
	+ \frac{1}{2J_r}p^2,
\end{equation}
from which
\begin{equation}
	\gradient\widetilde{\hamiltonian}(\psi_s,\psi_r,p,\theta) =
	\begin{bmatrix}
		L(\theta)^{-1}
		\begin{bmatrix}
			\psi_s \\ \psi_r
		\end{bmatrix} \\
		J_r^{-1}p       \\
		-\frac{1}{2}
		\begin{bmatrix}
			\psi_s \\ \psi_r
		\end{bmatrix}^\top
		L(\theta)^{-1}
		L'(\theta)
		L(\theta)^{-1}
		\begin{bmatrix}
			\psi_s \\ \psi_r
		\end{bmatrix}
	\end{bmatrix}.
\end{equation}
Therein, $\psi_s,\psi_r\in\R^3$ represent the stator and rotor fluxes, respectively. For the further variable declarations, please refer to Example~\ref{ex_synchro} in the bulk part of this work.
Suppose we now want to rewrite \eqref{eq:syncMach_dyn}, \eqref{eq:syncMach_out} replacing the magnetic fluxes with the corresponding currents, through the constitutive relation
\[
	\begin{bmatrix}
		\psi_s \\ \psi_r
	\end{bmatrix}
	= L(\theta)I .
\]
This change of variables allows us to rewrite the system equivalently in its pHDAE form given in \eqref{eq:syncMachAlt_dyn} and \eqref{eq:syncMachAlt_out} with the corresponding Hamiltonian \eqref{eq_syncMach_Ham}. These fit into the general pHDAE formulation with
\begin{align*}
	 & x =
	\begin{bmatrix}
		I \\ p \\ \theta
	\end{bmatrix}\in\R^8, \qquad
	u =
	\begin{bmatrix}
		V_s \\ V_f \\ \tau
	\end{bmatrix}\in\R^5, \qquad
	y =
	\begin{bmatrix}
		I_s \\ I_f \\ \omega
	\end{bmatrix}\in\R^5,        \\
	 & E(I,\theta) =
	\begin{bmatrix}
		L(\theta) & 0 & L'(\theta)I \\ 0 & 1 & 0 \\ 0 & 0 & 1
	\end{bmatrix} \in \R^{8,8}, \qquad
	z(I,p,\theta) =
	\begin{bmatrix}
		I         \\
		J_r^{-1}p \\
		\frac{1}{2}I^\top L'(\theta)I
	\end{bmatrix} \in \R^8, \\
	 & J =
	\begin{bmatrix}
		0 & 0 & 0 \\ 0 & 0 & -1 \\ 0 & 1 & 0
	\end{bmatrix} \in \R^{8,8}, \qquad
	R =
	\begin{bmatrix}
		R_{s,r} & 0 & 0 \\ 0 & d & 0 \\ 0 & 0 & 0
	\end{bmatrix} \in \R^{8,8}, \qquad
	B =
	\begin{bmatrix}
		I_3 & 0 & 0 & 0 \\ 0 & e_1^\top & 0 & 0 \\ 0 & 0 & 1 & 0
	\end{bmatrix} .
\end{align*}
\section{Further details on the DDR-method with singular descriptor matrix}\label{appendix_counter_example}

\noindent Towards the end of \Cref{sec_version3} we discussed possible limitations of the DDR-approach, questioning whether one can always find a discrete gradient that ensures the solvability of the system equation \eqref{eqn_colsp}, even for a singular descriptor matrix $E$.
This of course also depends on the choice of $\discreteE$, but for simplicity one would hope that for some suitable choice of $\discreteE$, such as the midpoint approximation
\begin{equation}
	\label{eq:midpoint_E}
	\discreteE(x,x') \vcentcolon= E(\tfrac{x+x'}2),
\end{equation}
one can always construct a discrete gradient satisfying \eqref{eqn_colsp2} for all $x,x'\in\R^n$.
The following example demonstrates that this is not in general the case when considering a fixed consistent approximation for $E$, like the one in \eqref{eq:midpoint_E}.

\begin{myex}
	Consider a pHDAE with given
	\begin{align*}
		\hamiltonian(x) & = \exp(\tfrac12x_1^2)-1+\tfrac12x_2^2,\quad \gradient\hamiltonian(x) =
		\begin{bmatrix}
			x_1\exp(\tfrac12x_1^2) \\
			x_2
		\end{bmatrix}
		,                                                                                        \\
		E(x)            & =
		\begin{bmatrix}
			1 \\
			1
		\end{bmatrix}
		\gradient\hamiltonian(x)\transp =
		\begin{bmatrix}
			x_1\exp(\tfrac12x_1^2) & x_2 \\
			x_1\exp(\tfrac12x_1^2) & x_2
		\end{bmatrix}
		,\quad\costate(x) = \tfrac12
		\begin{bmatrix}
			1 \\
			1
		\end{bmatrix}
		.
	\end{align*}
	Here, $E$ and $\costate$ are constructed such that $E\transp\costate=\gradient\hamiltonian$ holds, while the choice of the coefficients $J,R,B$ is free and they can be set e.g.~to zero, since they do not explicitly influence \eqref{eqn_colsp}. Correspondingly, we know that $\mathrm{colsp}(E\transp) = \mathrm{span}(\gradient\hamiltonian)$ holds pointwise.
	For $E$ we consider the midpoint approximation $\discreteE$ as in \eqref{eq:midpoint_E}, which results in
	\begin{equation*}
		\mathrm{colsp}\pset[\big]{\discreteE(x,x')\transp}
		= \mathrm{colsp}\pset[\big]{E(\tfrac{\state+\state'}{2})\transp}
		= \mathrm{span}\pset[\big]{\gradient\hamiltonian(\tfrac{x+x'}2)} .
	\end{equation*}
	A priori, we want to allow the choice of an arbitrary discrete gradient $\DG\hamiltonian$ of $\hamiltonian$.
	By \cite[Proposition~3.2]{McLQR99}, $\DG\hamiltonian$ may be split up into orthogonal contributions as
	\begin{equation}
		\label{eq:discrete_gradient_general_expression}
		\DG\hamiltonian(x,x') = \frac{\hamiltonian(x')-\hamiltonian(x)}{\lVert x'-x\rVert^2}(x'-x)+w(x,x')
	\end{equation}
	for $x\ne x'$, where $w$ satisfies $w(x,x')\in\mathrm{span}(\state'-\state)^\perp$
	and
	\begin{equation*}
		\lim_{x'\to x} \pset[\big]{ w(x,x')-\pi_{\mathrm{span}(x'-x)^\perp}\gradient\hamiltonian(x) } =0,
	\end{equation*}
	where
	\[
		\pi_{\mathrm{span}(x'-x)^\perp} = \frac{I-(x'-x)(x'-x)\transp}{\norm{x'-x}^2}
	\]
	denotes the orthogonal projection onto $\mathrm{span}(x'-x)^\perp$.
	Straightforward calculations yield that for the special choice
	\begin{equation*}
		x =
		\begin{bmatrix}
			a \\
			0
		\end{bmatrix}
		,\quad x' =
		\begin{bmatrix}
			0 \\
			b
		\end{bmatrix} , \quad  a,b \in \R \setminus \{0\},
	\end{equation*}
	where $b:= \pm a \sqrt{\exp(\frac{1}{2}(a/2)^2)}$,
	we have $\hamiltonian(x')\ne\hamiltonian(x)$ and $\gradient\hamiltonian(\tfrac{x+x'}2) \transp (x'-x) = 0$, and therefore
	\begin{equation*}
		x'-x\in \mathrm{span}\pset[\big]{\gradient\hamiltonian(\tfrac{x+x'}2)}^\perp = \mathrm{colsp}(\discreteE(x,x')\transp)^\perp.
	\end{equation*}
	Due to relation \eqref{eq:discrete_gradient_general_expression}, this implies $\DG\hamiltonian(x,x')\notin\mathrm{colsp}(\discreteE(x,x')\transp)$, i.e., there exists no vector $f$ satisfying $\discreteE(x,x')\transp f = \DG\hamiltonian(x,x')$ for this choice of $x,x'$.
	The discrete equation \eqref{eqn_colsp} is therefore unsolvable.
\end{myex}

\section{Composition and inversion of system transformations}\label{app_systemTransformation}

\noindent Suppose that $(\varphi,U):\widetilde\statespace\to\statespace\times\R^{n,n}$ and $(\widetilde\varphi,\widetilde U):\widehat\statespace\to\widetilde\statespace\times\R^{n,n}$ are two system transformations.
Then it is natural to define their \emph{composition} as
\[
	(\varphi,U) \circ (\widetilde\varphi,\widetilde U) = \pset[\big]{\varphi\circ\widetilde\varphi,(U\circ\widetilde\varphi)\widetilde U}:\widehat\statespace\to\statespace\times\R^{n,n},
\]
since applying $(\varphi,U) \circ (\widetilde\varphi,\widetilde U)$ to a system or gradient pair is equivalent to applying $(\varphi,U)$ and $(\widetilde\varphi,\widetilde U)$ consecutively.

Let now $\discretejacobian\varphi$ and $\discretejacobian\widetilde\varphi$ be discrete Jacobians for $\varphi$ and $\widetilde\varphi$, and let $\discrete{U}$ and $\widehat{U}$ be consistent approximations of $U$ and $\widetilde U$, respectively. Because of \eqref{eq:DG_chainRule}, $(\discretejacobian\varphi\circ\widetilde\varphi)\discretejacobian\widetilde\varphi$ is a discrete Jacobian for $\varphi\circ\widetilde\varphi$, while $(\discrete U\circ\widetilde\varphi)\widehat U$ is obviously a consistent approximation of $(U\circ\widetilde\varphi)\widetilde U$.
Note that this choice is consistent with \Cref{thm:discreteGradientPairChainRule}.

Consider now an invertible state transformation $(\varphi,U)$. We define its \emph{inverse} as $(\varphi,U)^{-1} = (\varphi^{-1}$, $U^{-1}\circ\varphi^{-1})$, since the compositions
\[
	(\varphi^{-1},U^{-1}\circ\varphi^{-1})\circ(\varphi,U)=(\mathrm{Id}_{\widetilde\statespace},I_n) \quad\text{and}\quad (\varphi,U)\circ(\varphi^{-1},U^{-1}\circ\varphi^{-1})=(\mathrm{Id}_{\statespace},I_n)
\]
leave all systems and gradient pairs they are applied to invariant.
Let us now denote $\psi=\varphi^{-1}$ and $V=U^{-1}\circ\varphi^{-1}$.
Then, given discrete Jacobians $\discretejacobian\varphi$ and $\discretejacobian\psi$ for $\psi$ and its inverse, and pointwise invertible consistent approximations $\discrete{U}$ and $\discrete{V}$ for $U$ and $V$, by applying \Cref{thm:discreteGradientPairChainRule} to a discrete gradient pair $(\discreteE,\discretecostate)$ for $(\hamiltonian,E,\costate)$ with the system transformation $(\varphi,U)$ and its inverse $(\psi,V)$ consecutively, we obtain the discrete gradient pair
\[
	(\widehat E,\hat\costate) = \pset[\big]{ \discrete V\transp(\discrete U\circ\psi)\transp\discreteE\,(\discretejacobian\varphi\circ\psi)\discretejacobian\psi \; , \; \discrete V^{-1}(\discrete U^{-1}\circ\psi)\discretecostate },
\]
for the same $(\hamiltonian,E,\costate)$ and the same state space.
Since the composition of $(\varphi,U)$ and $(\psi,V)$ leaves the gradient pair unaltered, it is sensible to choose $\discretejacobian\psi$ and $\discrete{V}$ in such a way that $(\widehat E,\hat\costate)=(\discreteE,\discretecostate)$.
If we want this choice to be independent of $(E,\costate)$, this is equivalent to the conditions $\discrete{V}=\discrete{U}^{-1}\circ\psi$ and $(\discretejacobian\varphi\circ\psi)\discretejacobian\psi=I_n$.
While the former condition can always be imposed and ensures that $\discrete{V}$ is a consistent approximation of $V$, the latter condition requires $\discretejacobian\varphi$ to be pointwise invertible, and in that case is equivalent to $\discretejacobian\psi=(\discretejacobian\varphi\circ\psi)^{-1}$.
This leads to the following canonical construction for the inverse discrete Jacobian.

\begin{lemma}\label{lem:discreteJacobianDiffeo}
	Let $\varphi\in\cont^1(\widetilde\statespace,\statespace)$ be a diffeomorphism between two open spaces $\statespace,\widetilde\statespace\subseteq\R^n$, and let $\discretejacobian{\varphi}$ be a pointwise invertible discrete Jacobian for $\varphi$.
	Then a discrete Jacobian for $\varphi^{-1}$, which we call the \emph{inverse discrete Jacobian} of $\varphi$ based on $\discretejacobian\varphi$, is
	$
		\discretejacobian(\varphi^{-1})=(\discretejacobian{\varphi}\circ\varphi^{-1})^{-1}.
	$
\end{lemma}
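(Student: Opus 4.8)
\medskip

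\noindent The plan is to verify directly that the proposed matrix function $\discretejacobian(\varphi^{-1}) \coloneqq (\discretejacobian{\varphi}\circ\varphi^{-1})^{-1}$ satisfies the two defining properties of a discrete Jacobian for $\varphi^{-1}$ from \Cref{def_discrete_jac}: the directionality condition and the consistency condition. Since $\discretejacobian\varphi$ is assumed pointwise invertible on $\widetilde\statespace\times\widetilde\statespace$, and $\varphi^{-1}:\statespace\to\widetilde\statespace$ is a diffeomorphism, the composition $(\discretejacobian\varphi\circ\varphi^{-1})$ is a well-defined pointwise invertible matrix function on $\statespace\times\statespace$, so its pointwise inverse $\discretejacobian(\varphi^{-1})$ is a well-defined element of $\cont(\statespace\times\statespace,\R^{n,n})$; this continuity and well-definedness should be noted first.

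\medskip

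\noindent For the directionality condition, fix $x,x'\in\statespace$ and set $\tilde x=\varphi^{-1}(x)$, $\tilde x'=\varphi^{-1}(x')$, so that $x=\varphi(\tilde x)$ and $x'=\varphi(\tilde x')$. The directionality of $\discretejacobian\varphi$ gives $\discretejacobian\varphi(\tilde x,\tilde x')(\tilde x'-\tilde x)=\varphi(\tilde x')-\varphi(\tilde x)=x'-x$. Left-multiplying by $(\discretejacobian\varphi(\tilde x,\tilde x'))^{-1}=(\discretejacobian\varphi\circ\varphi^{-1})(x,x')^{-1}=\discretejacobian(\varphi^{-1})(x,x')$ yields
\[
	\discretejacobian(\varphi^{-1})(x,x')(x'-x)=\tilde x'-\tilde x=\varphi^{-1}(x')-\varphi^{-1}(x),
\]
which is exactly the directionality property for $\varphi^{-1}$. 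For the consistency condition, fix $x\in\statespace$ with $\tilde x=\varphi^{-1}(x)$; then by consistency of $\discretejacobian\varphi$ we have $(\discretejacobian\varphi\circ\varphi^{-1})(x,x)=\discretejacobian\varphi(\tilde x,\tilde x)=\jacobian\varphi(\tilde x)$, hence
\[
	\discretejacobian(\varphi^{-1})(x,x)=\jacobian\varphi(\tilde x)^{-1}=\jacobian(\varphi^{-1})(x),
\]
where the last equality is the classical inverse function theorem. This completes the verification.

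\medskip

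\noindent There is essentially no real obstacle here: the statement is a formal bookkeeping lemma, and the only points requiring a word of care are (i) checking that pointwise invertibility of $\discretejacobian\varphi$ on $\widetilde\statespace\times\widetilde\statespace$ transfers, via precomposition with the diffeomorphism $\varphi^{-1}$, to pointwise invertibility of $(\discretejacobian\varphi\circ\varphi^{-1})$ on $\statespace\times\statespace$ (so that the pointwise inverse, and thus $\discretejacobian(\varphi^{-1})$, is genuinely well-defined and continuous), and (ii) invoking the inverse function theorem for the consistency step. Both are standard, so the proof is short.
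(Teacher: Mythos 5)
Your proof is correct and follows essentially the same route as the paper's: verify directionality by combining the identity $x'-x=\varphi(\tilde x')-\varphi(\tilde x)$ with the directionality of $\discretejacobian\varphi$ and cancelling, then verify consistency via $\discretejacobian\varphi(\tilde x,\tilde x)=\jacobian\varphi(\tilde x)$ and the inverse function theorem. Your additional remarks on well-definedness and continuity of the pointwise inverse are a harmless (and mildly useful) supplement to what the paper leaves implicit.
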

\begin{proof}
	For $\state,\state'\in\statespace$ and $\tilde\state=\varphi^{-1}(\state),\ \tilde\state'=\varphi^{-1}(\state')$ we have
	\[
		\discretejacobian(\varphi^{-1})(\state,\state')(\state'-\state)
		= \discretejacobian{\varphi}(\tilde\state,\tilde\state')^{-1} \pset[\big]{\varphi(\tilde\state')-\varphi(\tilde\state)}
		= \discretejacobian{\varphi}(\tilde\state,\tilde\state')^{-1} \discretejacobian{\varphi}(\tilde\state,\tilde\state') (\tilde\state' - \tilde\state)
		= \varphi^{-1}(\state') - \varphi^{-1}(\state)
	\]
	and $\discretejacobian(\varphi^{-1})(\state,\state)=\jacobian{\varphi}(\tilde\state)^{-1}=\jacobian(\varphi^{-1})(\state)$.
\end{proof}

\noindent Note that the requirement for $\discretejacobian\varphi$ to be pointwise invertible in the construction of the inverse discrete Jacobian is often in practice not restrictive, since the discrete Jacobians of diffeomorphisms are locally pointwise invertible, as discussed in \Cref{rem:DDR_systemTransformation}. However, it can happen that such discrete Jacobians are not globally pointwise invertible, as the following example highlights.

\begin{myex}\label{ex:counterexampleForInvertibleDiscreteJacobianOfADiffeomorphism}
	Let us consider $\varphi:\R^2\to\R^2,\ \varphi(\state)=\mathrm{Rot}(\state\transp\state)x$, where $\mathrm{Rot}:\R\to\R^{2,2}$ is the rotation matrix function, i.e.,
	\[
		\mathrm{Rot}(\theta) =
		\begin{bmatrix}
			\cos(\theta) & -\sin(\theta) \\ \sin(\theta) & \cos(\theta)
		\end{bmatrix}
	\]
	for all $\theta\in\R$.
	We note that $\varphi$ is $\cont^\infty$ and that it is a bijective map, since it decomposes into rotations of fixed angle on every circle centered in $0\in\R^2$.
	Furthermore, since
	\[
		\frac{\mathrm{d}\mathrm{Rot}}{\mathrm{d}\theta}(\theta) =
		\begin{bmatrix}
			-\sin(\theta) & -\cos(\theta) \\ \cos(\theta) & -\sin(\theta)
		\end{bmatrix}
		=
		\begin{bmatrix}
			\cos(\theta + \tfrac{\pi}{2}) & -\sin(\theta + \tfrac{\pi}{2}) \\ \sin(\theta + \tfrac{\pi}{2}) & \cos(\theta + \tfrac{\pi}{2})
		\end{bmatrix}
		= \mathrm{Rot}(\theta + \tfrac{\pi}{2})
		= \mathrm{Rot}(\theta)\mathrm{Rot}(\tfrac{\pi}{2}),
	\]
	we deduce that
	\[
		\jacobian\varphi(x) = \jacobian\pset[\big]{ \mathrm{Rot}(\state\transp\state)\state }
		= \mathrm{Rot}(\state\transp\state) + \frac{\mathrm{d}\mathrm{Rot}}{\mathrm{d}\theta}(\state\transp\state)\gradient(\state\transp\state)\state\transp
		= \mathrm{Rot}(\state\transp\state) \pset[\big]{ I_2 + 2\mathrm{Rot}(\tfrac{\pi}{2})\state\state\transp }.
	\]
	We now show that $\jacobian\varphi$ is pointwise invertible. Suppose that $\state,w\in\R^2$ satisfy $\jacobian\varphi(\state)w=0$: then $\pset[\big]{I+2\mathrm{Rot}(\tfrac{\pi}{2})\state\state\transp}w = 0$, from which $w = - 2\state\transp w\,\mathrm{Rot}(\tfrac{\pi}{2})\state$.
	In particular,
	\[
		\state\transp w = -2\state\transp w \pset[\big]{ \state\transp\mathrm{Rot}(\tfrac{\pi}{2})\state } = 0,
	\]
	and therefore $w=0$, allowing us to conclude that $\jacobian\varphi$ is pointwise invertible.

	Let now $\discretejacobian\varphi$ denote the Gonzales discrete Jacobian of $\varphi$ (see \eqref{eq:GonzalezJacobian}). In particular, for every $\state,\state'\in\R^2$ and $z\in(\state'-\state)^\perp$ we have
	\[
		\discretejacobian\varphi(\state,\state')(\state'-\state) = \varphi(\state') - \varphi(\state) \quad\text{and}\quad
		\discretejacobian\varphi(\state,\state')z = \jacobian\varphi(\tfrac{\state+\state'}{2})z.
	\]
	For $\state=0$, $\state'=(\sqrt{2\pi},0)$, $e_1=(1,0)$, and $e_2=(0,1)$, we obtain then $\state'=\sqrt{2\pi}e_1$ and $\state'-\state=\state'\perp e_2$, and therefore
	\[
		\discretejacobian\varphi(0,\state')e_1
		= \frac{\discretejacobian\varphi(0,\state')\state'}{\sqrt{2\pi}}
		= \frac{\varphi(\state')-\varphi(\state)}{\sqrt{2\pi}}
		= \frac{\mathrm{Rot}(2\pi)x'-\mathrm{Rot}(0)0}{\sqrt{2\pi}}
		= \frac{x'}{\sqrt{2\pi}} = e_1
	\]
	and
	\[
		\discretejacobian\varphi(0,\state')e_2
		= \jacobian\varphi(\tfrac{x+x'}{2})e_2
		= \mathrm{Rot}(\tfrac{\pi}{2})\pset[\big]{ e_2 + \tfrac{1}{2}\mathrm{Rot}(\tfrac{\pi}{2})x'(x')\transp e_2 }
		= \mathrm{Rot}(\tfrac{\pi}{2})e_2 = -e_1.
	\]
	Thus,
	\[
		\discretejacobian\varphi(\state,\state') = [e_1,-e_1] =
		\begin{bmatrix}
			1 & -1 \\ 0 & 0
		\end{bmatrix},
	\]
	which is singular.
\end{myex}

\noindent Note that, while in \Cref{ex:counterexampleForInvertibleDiscreteJacobianOfADiffeomorphism} the inverse discrete Jacobian of $\varphi$ based on $\discretejacobian\varphi$ cannot be constructed, the inverse diffeomorphism $\varphi^{-1}$ still admits discrete Jacobians, e.g.~the Gonzales one.
This in particular shows that the Gonzales discrete Jacobian of the inverse diffeomorphism does not in general coincide with the inverse discrete Jacobian based on the Gonzales discrete Jacobian.

We also present the following example, that shows that the Gonzales discrete gradient construction does not in general commute with system transformations.

\begin{myex}
	Consider the function $f:\R^2\to\R,\ f(x)=\frac{1}{4}\norm{x^4}$ and the change of variables $\varphi:\R^2\to\R^2,\ \varphi(x_1,x_2)=(x_1+x_2,x_2)$, which yield
	\[
		\gradient f = \norm{x}^2 x, \qquad \jacobian\varphi =
		\begin{bmatrix}
			1 & 1 \\ 0 & 1
		\end{bmatrix},
	\]
	and let $\DG_Gf$ and $\discretejacobian_G\varphi$ denote the corresponding Gonzales discrete gradient and Gonzales discrete Jacobian, noting in particular that $\discretejacobian_G\varphi=\jacobian\varphi$, since it is a constant matrix.
	We have now two natural ways to construct a discrete gradient for the composed map $\tilde f=f\circ\varphi$: either as its Gonzales discrete gradient $\DG_G\tilde f$, or by using the chain rule \eqref{eq:DG_chainRule}, i.e., $\DG\tilde f=\discretejacobian_G\varphi\transp\DG_Gf$.
	We have then
	\[
		\DG_G\tilde f(0,2e_1)\transp e_2 = \gradient\tilde f(e_1)\transp e_2 = \gradient f\pset[\big]{\varphi(e_1)}\transp \jacobian\varphi\, e_2 = e_1\transp
		\begin{bmatrix}
			1 & 1 \\ 0 & 1
		\end{bmatrix}
		e_2 = 1
	\]
	and
	\begin{align*}
		\DG\tilde f(0,2e_1)\transp e_2
		 & = \DG_G\tilde f(0,2e_1)\transp \discretejacobian\varphi\, e_2
		= \DG_G f(0,2e_1)\transp (e_1+e_2)                                         \\
		 & = \tfrac{1}{2} \DG_G f(0,2e_1)\transp 2e_1 + \DG_G f(0,2e_1)\transp e_2
		= \tfrac{1}{2}\pset[\big]{f(2e_1)-f(0)} + \gradient f(e_1)\transp e_2 = 2,
	\end{align*}
	thus $\DG\tilde f$ and $\DG_G\tilde f$ do not coincide.
\end{myex}

\section{Details on the mass-spring multibody system example}\label{appendix_mass_spring}

\noindent The multibody system with singular mass matrix from Example~\ref{ex_singularM},
with masses $m_i$ and springs with constants $k_i$ and resting lengths $l_{i0}$, with $i \in \{1,2 \}$,
takes up the considerations in Section~\ref{modelling_mbs}. The system can be regarded as a modular multibody system comprising two separate subsystems with two degrees of freedom.
However, we decide to use two coordinates for the elongations of the springs ($x_1$ and $x_2$) and one coordinate $q_2$ for the point where the two subsystems are interconnected.
Correspondingly, we use
\begin{align*}
	{q}= \begin{bmatrix}
		     x_1 \\ q_2 \\ x_2
	     \end{bmatrix}
	\quad \text{and} \quad
	{v} = \begin{bmatrix}
		      v_1 \\ v_2 \\ v_3
	      \end{bmatrix} = \begin{bmatrix}
		                      \dot{x}_1 \\ \dot{q}_2 \\ \dot{x}_2
	                      \end{bmatrix}
\end{align*}
and the interconnection constraint $ q_2 = x_1 + l_{10} + w $ arises. The total kinetic energy is given by
\begin{align} \label{ex_kinetic_energy_singularM}
	T({v}) = \frac{1}{2} m_1 v_1^2 + \frac{1}{2} m_2 (v_2 + v_3)^2 = \frac{1}{2} {v}
	\transp M v
\end{align}
and thus we identify the mass matrix $M$ from \eqref{singular_M}, which is constant and singular for all configurations.

Following \Cref{prop:discreteGradientPair_constantE},
we perform a singular value decomposition to arrive at a semi-explicit pHDAE formulation. For the present case, we have
\begin{align}
	E =\begin{bmatrix}
		   I_{3 \times 3} & 0 & 0 \\
		   0              & M & 0 \\
		   0              & 0 & 0
	   \end{bmatrix}.
\end{align}
Here we identify the SVD of $E$ given by $E=U\Sigma V\transp=[U_1,U_2]\diag(\Sigma_1,0)[V_1,V_2]\transp$, where
$\Sigma_1 = \diag{(1,1,1,m_1,2m_2)}$ and
\begin{align}
	V_1 = W_1 = \begin{bmatrix}
		            1 & 0 & 0 & 0 & 0                  \\
		            0 & 1 & 0 & 0 & 0                  \\
		            0 & 0 & 1 & 0 & 0                  \\
		            0 & 0 & 0 & 1 & 0                  \\
		            0 & 0 & 0 & 0 & \frac{1}{\sqrt{2}} \\
		            0 & 0 & 0 & 0 & \frac{1}{\sqrt{2}} \\
		            0 & 0 & 0 & 0 & 0
	            \end{bmatrix} \ , V_2 = W_2 = \begin{bmatrix}
		                                          0                   & 0 \\
		                                          0                   & 0 \\
		                                          0                   & 0 \\
		                                          0                   & 0 \\
		                                          -\frac{1}{\sqrt{2}} & 0 \\
		                                          \frac{1}{\sqrt{2}}  & 0 \\
		                                          0                   & 1
	                                          \end{bmatrix}  .
\end{align}

\section{Transformation to semi-explicit form}
\label{appendix_to_semiexplicit}

\noindent The following theorem provides an SVD-like decomposition for a matrix function with constant rank. This result is helpful in deriving conditions for the existence of a semi-explicit representation of a pHDAE.

\begin{theorem}\label{thm:nullSpaceDec}
	Let $E\in\cont(\statespace,\R^{m,n})$, with $\rank(E(\state))=r$ for all $\state\in\statespace$.
	Then for every $\state_0\in\statespace$ there exists an open neighborhood $\statespace_0\subseteq\statespace$ of $\state_0$ and pointwise unitary functions $U\in\cont(\statespace_0,\R^{m,m})$ and $V\in\cont(\statespace_0,\R^{n,n})$ such that
	\begin{equation}
		U\transp EV = \begin{bmatrix}
			\Sigma & 0 \\ 0 & 0
		\end{bmatrix} ,
	\end{equation}
	with pointwise invertible $\Sigma\in\cont(\statespace_0,\R^{r,r})$.
	Furthermore, if the entries of $E$ are analytic or $\cont^\ell$ for some $\ell\in\N_0\cup\set{\infty}$, then $U$, $V$, and $\Sigma$ can be chosen to have that same regularity.
\end{theorem}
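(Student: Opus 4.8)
The plan is to reduce the theorem to two successive, purely local applications of a Gram--Schmidt-type construction: one acting on the left to extract the column space of $E$, and one acting on the right to extract its row space. This is essentially the argument behind \cite[Thm.~3.9]{KunM23}, and the only extra work needed here is to track the regularity class (continuous, $\cont^\ell$, or analytic) and to allow general $m,n$.

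First I would fix $\state_0\in\statespace$. Since $\rank(E(\state_0))=r$, some $r$ columns of $E(\state_0)$ are linearly independent; denote the corresponding column-valued maps by $c_1,\dots,c_r$, which have the same regularity as $E$ and which remain pointwise linearly independent on an open neighborhood $\statespace_0$ of $\state_0$ by continuity of the relevant $r\times r$ minor. Applying Gram--Schmidt to $(c_1,\dots,c_r)$ yields pointwise orthonormal maps $u_1,\dots,u_r\in\cont(\statespace_0,\R^m)$ whose span at each $\state$ equals the column space of $E(\state)$ (it is $r$-dimensional and contains each $c_i(\state)$). Next I would pick fixed standard basis vectors $\hat e_1,\dots,\hat e_{m-r}$ of $\R^m$ such that $u_1(\state_0),\dots,u_r(\state_0),\hat e_1,\dots,\hat e_{m-r}$ is a basis; this stays a basis on a possibly smaller $\statespace_0$, and Gram--Schmidt applied to this ordered list (leaving the first $r$ vectors untouched) completes the frame to a pointwise orthonormal basis $u_1,\dots,u_m$ with the required regularity. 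Setting $U=[u_1\ \cdots\ u_m]$, the last $m-r$ rows of $U\transp E$ vanish identically on $\statespace_0$ because $u_{r+1},\dots,u_m$ are orthogonal to $\mathrm{colsp}(E)$, so $U\transp E=\left[\begin{smallmatrix}\hat E\\ 0\end{smallmatrix}\right]$ with $\hat E\in\cont(\statespace_0,\R^{r,n})$ of pointwise full row rank $r$.

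Then I would run the same construction on $\hat E\transp$, which has pointwise full column rank $r$, shrinking $\statespace_0$ once more if necessary: this produces a pointwise unitary $V\in\cont(\statespace_0,\R^{n,n})$ whose first $r$ columns span the row space of $\hat E$, so that $\hat E V=[\Sigma\ 0]$ with $\Sigma\in\cont(\statespace_0,\R^{r,r})$; since $\hat E$ has rank $r$ and $V$ is invertible, $\Sigma$ is pointwise invertible. Combining the two steps,
\[
	U\transp E V = \begin{bmatrix}\Sigma & 0\\ 0 & 0\end{bmatrix}\quad\text{on }\statespace_0 ,
\]
and $U$, $V$, $\Sigma$ inherit the regularity of $E$.

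The hard part — really the only point requiring care — is checking that the Gram--Schmidt and basis-completion steps preserve the regularity class. This is routine but should be stated explicitly: each normalization divides by $\sqrt{\langle w,w\rangle}$ where $\langle w,w\rangle$ is strictly positive on $\statespace_0$, so the square root is as regular as $E$ (in particular analytic when $E$ is analytic, since $\sqrt{\cdot}$ is analytic on $(0,\infty)$), and every well-definedness condition used along the way — linear independence of the extracted columns, invertibility of the augmented matrix — is an open condition holding at $\state_0$ and hence on a neighborhood. The choices of which columns to extract and which standard basis vectors to append depend on $\state_0$, which is harmless precisely because the assertion is local; this is also why no global decomposition is claimed.
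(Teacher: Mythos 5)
Your proof is correct, but it takes a genuinely different route from the paper. The paper starts from a constant SVD of $E(\state_0)$, partitions $U_0\transp E(\state)V_0$ into blocks, uses invertibility of the $(1,1)$ block near $\state_0$ to perform a block (Schur-complement) elimination — where the constant-rank hypothesis forces the complement $E_{22}-E_{21}E_{11}^{-1}E_{12}$ to vanish — and then repairs unitarity by QL-factorizing the resulting non-unitary transformation matrices via Gram--Schmidt, using the group structure of lower triangular matrices to check that the block zero pattern survives, and the adjugate/determinant formula to track regularity of the pointwise inverse. You instead build the unitary factors directly: Gram--Schmidt on $r$ columns of $E$ that stay independent near $\state_0$ gives an orthonormal frame for $\mathrm{colsp}(E)$ (constant rank enters here, guaranteeing these columns span the whole column space), completion by fixed standard basis vectors gives $U$, and the same construction applied to $\hat E\transp$ gives $V$; invertibility of $\Sigma$ then follows from rank counting. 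Your version is somewhat more self-contained — it needs no matrix inversion at all, only division by strictly positive norms, and avoids the triangular-group bookkeeping — while the paper's version ties the construction to the SVD used elsewhere (e.g.\ in the constant-$E$ case of \Cref{prop:discreteGradientPair_constantE}) and exhibits $\Sigma$ explicitly as a congruence transform of the block $E_{11}$. Your regularity discussion (openness of the nondegeneracy conditions, analyticity of $t\mapsto 1/\sqrt{t}$ on $(0,\infty)$, locality of the column and basis-vector choices) covers exactly the points that need care, so no gap remains.
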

\begin{proof}
	Let
	\[
		U_0\transp E(\state_0)V_0 = \begin{bmatrix}
			\Sigma_0 & 0 \\ 0 & 0
		\end{bmatrix}
	\]
	denote the singular value decomposition of $E(\state_0)$, with $U_0\in\R^{m,m}$ and $V_0\in\R^{n,n}$ unitary matrices, and $\Sigma_0\in\R^{r,r}$ nonsingular, and let us split correspondingly
	\[
		U_0\transp E(\state)V_0 = \begin{bmatrix}
			E_{11}(\state) & E_{12}(\state) \\ E_{21}(\state) & E_{22}(\state)
		\end{bmatrix}
	\]
	for all $\state\in\statespace$. Since $E_{11}(\state_0)=\Sigma_0$ is invertible and $E$ is continuous, there is an open neighborhood $\statespace_0\subseteq\statespace$ of $\state_0$ such that $E_{11}(\state)$ is invertible for all $\state\in\statespace_0$.
	We note that $E_{ij}$ has at least the same regularity as $E$ for all $i,j$, and that, since the determinant of a matrix function clearly has the same regularity as its entries, and the pointwise inverse of $E_{11}$ on $\statespace_0$ can be expressed in the form
	\[
		E_{11}(\state)^{-1} = \frac{\mathrm{adj}(E_{11}(\state))}{\det(E_{11}(\state))},
	\]
	where $\mathrm{adj}(E_{11}(\state))$ denotes the adjugate matrix of $E_{11}(\state)$ (whose entries are determinants of submatrices of $E_{11}(\state)$), and $\det(E_{11}(\state))$ does not vanish on $\statespace_0$, $E_{11}$ will also have the same regularity as $E$ on $\statespace_0$.

	Let now
	\[
		\widetilde U(\state)\transp \coloneqq \begin{bmatrix}
			I_r & 0 \\ -E_{21}(\state)E_{11}(\state)^{-1} & I_{m-r}
		\end{bmatrix}U_0\transp, \qquad
		\widetilde V(\state) \coloneqq V_0\begin{bmatrix}
			I_r & -E_{11}(\state)^{-1}E_{12}(\state) \\ 0 & I_{n-r}
		\end{bmatrix},
	\]
	so that
	\[
		\widetilde U(\state)\transp E(x)\widetilde V(\state) = \begin{bmatrix}
			E_{11}(\state) & 0 \\ 0 & E_{22}(\state) - E_{21}(\state)E_{11}(\state)^{-1}E_{12}(\state)
		\end{bmatrix}
	\]
	for all $\state\in\statespace_0$. Since $E_{11}(\state)\in\R^{r,r}$ is invertible for all $\state\in\statespace_0$ and the rank of $E$ is constantly $r$, we deduce that actually
	\[
		\widetilde U(\state)\transp E(x)\widetilde V(\state) = \begin{bmatrix}
			E_{11}(\state) & 0 \\ 0 & 0
		\end{bmatrix}
	\]
	for all $\state\in\statespace_0$.
	We note that $\widetilde U$ and $\widetilde V$ also have the same regularity as $E$.

	It remains to show that $\widetilde U$ and $\widetilde V$ can be replaced by pointwise unitary matrix functions with the same regularity.
	Let $\widetilde U(\state)=U(\state)L_U(\state)$ and $\widetilde V(\state)=V(\state)L_V(\state)$ be the QL factorizations of $\widetilde U$ and $\widetilde V$, which can be computed for all $\state\in\statespace_0$ with the Gram-Schmidt orthogonalization process, in particular $U$ and $V$ are pointwise unitary, and $L_U$ and $L_V$ are pointwise lower triangular with positive diagonal entries.
	Note that, due to the pointwise invertibility of $\widetilde U$ and $\widetilde V$, the construction of the Gram-Schmidt process ensures that $U,V,L_U,L_V$ have the same regularity as $\widetilde U,\widetilde V$.
	Furthermore, since lower triangular matrices form a multiplicative group, we deduce that
	\begin{align*}
		U(\state)\transp E(\state)V(\state)
		 & = L_U(\state)^{-\top}\widetilde U(\state)\transp E(\state)\widetilde V(\state)L_V(\state)^{-1}               \\
		 & = \begin{bmatrix}
			     \widetilde L_{11}(\state)\transp & \widetilde L_{21}(\state)\transp \\ 0 & \widetilde L_{22}(\state)\transp
		     \end{bmatrix}
		\begin{bmatrix}
			E_{11}(\state) & 0 \\ 0 & 0
		\end{bmatrix}
		\begin{bmatrix}
			\widehat L_{11}(\state) & \widehat L_{21}(\state) \\ 0 & \widehat L_{22}(\state)
		\end{bmatrix}                                 \\
		 & =
		\begin{bmatrix}
			\widetilde L_{11}(\state)\transp E_{11}(\state)\widehat L_{11}(\state) & 0 \\ 0 & 0
		\end{bmatrix},
	\end{align*}
	where $\widetilde L(\state)=L_U(\state)^{-1}$ and $\widehat L(\state)=L_V(\state)^{-1}$ have again the same regularity as $L_U$ and $L_V$, due to the same observations that we made for $E_{11}$.
	We conclude that $U$ and $V$ have all the requested properties.
\end{proof}

\noindent While \Cref{thm:nullSpaceDec} is quite powerful, unfortunately it cannot be directly applied to obtain system transformations that bring a descriptor matrix $E$ into its semi-explicit form, since the matrix function $V$ is not guaranteed to be the Jacobian of a diffeomorphism. However, this result is the first step in the direction of proving \Cref{thm:analyticGradientPair}. The second step is to deduce the following corollary.

\begin{corollary}\label{lem:unitary_permutation}
	Let $E\in\cont(\statespace,\R^{m,n})$ with $\rank(E(\state))=r$ for all $\state\in\statespace$.
	Then for every $\state_0\in\statespace$ there exist an open neighborhood $\statespace_0\subseteq\statespace$ of $\state_0$, a pointwise invertible
	matrix function $U\in\cont(\statespace_0,\R^{m,m})$, and a permutation matrix $\Pi\in\R^{n,n}$, such that
	\begin{equation}
		U\transp E\Pi = \begin{bmatrix}
			I_r & E_{12} \\ 0 & 0
		\end{bmatrix} ,
	\end{equation}
	with
	$E_{12}\in\cont(\statespace_0,\R^{r,n-r})$.
	Furthermore, if the entries of $E$ are analytic or $\cont^\ell$ for some $\ell\in\N_0\cup\set{\infty}$, then $U$ and $E_{12}$ can be chosen to have that same regularity.
\end{corollary}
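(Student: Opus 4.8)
The plan is to reduce the statement to the choice of a single invertible $r\times r$ submatrix of $E$, exploiting that the set of full-rank matrices is open. First I would use $\rank(E(\state_0))=r$ to select a column permutation matrix $\Pi\in\R^{n,n}$ so that the first $r$ columns of $E(\state_0)\Pi$ are linearly independent; writing $E\Pi=[\,C\ D\,]$ with $C\in\cont(\statespace,\R^{m,r})$ and $D\in\cont(\statespace,\R^{m,n-r})$, this means $C(\state_0)$ has full column rank. Next I would pick a row permutation matrix $P\in\R^{m,m}$ moving $r$ linearly independent rows of $C(\state_0)$ to the top, so that $PC=\left[\begin{smallmatrix}C_1\\C_2\end{smallmatrix}\right]$ with $C_1(\state_0)\in\R^{r,r}$ invertible. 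Since the determinant is continuous, there is an open neighborhood $\statespace_0\subseteq\statespace$ of $\state_0$ on which $C_1(\state)$ remains invertible.

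The key step is then to define, on $\statespace_0$,
\[
U\transp := \begin{bmatrix} C_1^{-1} & 0 \\ -C_2C_1^{-1} & I_{m-r}\end{bmatrix}P ,
\]
which is a product of pointwise invertible matrices, hence pointwise invertible, so that $U$ has the required properties. Writing also $PD=\left[\begin{smallmatrix}D_1\\D_2\end{smallmatrix}\right]$, a block multiplication gives $U\transp E\Pi=\left[\begin{smallmatrix}I_r & C_1^{-1}D_1\\ 0 & D_2-C_2C_1^{-1}D_1\end{smallmatrix}\right]$. It then remains to show that $D_2-C_2C_1^{-1}D_1$ vanishes, and this is the one place where the full hypothesis is genuinely used — namely the constant rank of $E$, not merely of $C$: since $U\transp$ and $\Pi$ are invertible, $U\transp E\Pi$ has rank $r$ at every point of $\statespace_0$, whereas right-multiplying it by the invertible matrix $\left[\begin{smallmatrix}I_r & -C_1^{-1}D_1\\ 0 & I\end{smallmatrix}\right]$ produces $\diag(I_r,\,D_2-C_2C_1^{-1}D_1)$, whose rank equals $r+\rank(D_2-C_2C_1^{-1}D_1)$. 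Hence $D_2-C_2C_1^{-1}D_1=0$ on $\statespace_0$, and we may set $E_{12}:=C_1^{-1}D_1\in\cont(\statespace_0,\R^{r,n-r})$.

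For the regularity claim I would argue as in the proof of \Cref{thm:nullSpaceDec}: $C_1,C_2,D_1$ inherit the regularity of $E$, and on $\statespace_0$ the pointwise inverse $C_1^{-1}=\mathrm{adj}(C_1)/\det(C_1)$ has the same regularity since $\det(C_1)$ does not vanish there; products of such matrix functions, together with the constant matrices $P$ and $\Pi$, preserve analyticity or $\cont^\ell$ regularity, which yields the asserted regularity of $U$ and $E_{12}$. I expect the only nonroutine point to be the vanishing of the lower-right block — i.e.\ the rank argument above — while everything else is bookkeeping; an alternative route would be to feed \Cref{thm:nullSpaceDec} into the construction, but the direct argument has the advantage of delivering a \emph{constant} permutation $\Pi$ in place of a state-dependent unitary factor.
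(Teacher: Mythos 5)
Your proof is correct, and it takes a genuinely different route from the paper. The paper proves the corollary by invoking \Cref{thm:nullSpaceDec}: it first produces pointwise unitary $\widetilde U,\widetilde V$ with $\widetilde U\transp E\widetilde V=\diag(\Sigma,0)$ (whose own proof goes through an SVD at $\state_0$, block elimination, and a Gram--Schmidt QL factorization to restore unitarity), reads off that $\widetilde U\transp E$ has zero bottom rows, then chooses the permutation $\Pi$ so that the leading $r\times r$ block is invertible at $\state_0$ and normalizes it by a further left factor. You instead work directly on $E$: a constant column permutation $\Pi$ and a constant row permutation $P$ isolate an $r\times r$ submatrix $C_1$ invertible at $\state_0$ (hence on a neighborhood, by continuity of the determinant), block Gaussian elimination with $U\transp=\left[\begin{smallmatrix} C_1^{-1} & 0 \\ -C_2C_1^{-1} & I_{m-r}\end{smallmatrix}\right]P$ produces $\left[\begin{smallmatrix} I_r & C_1^{-1}D_1 \\ 0 & D_2-C_2C_1^{-1}D_1\end{smallmatrix}\right]$, and the constant-rank hypothesis forces the Schur complement $D_2-C_2C_1^{-1}D_1$ to vanish; the regularity claim follows from the adjugate-over-determinant formula exactly as in the paper's argument for \Cref{thm:nullSpaceDec}. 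Your version is more elementary and self-contained (no unitary factors, no QL factorization), it makes explicit that the constant rank of $E$ — not merely invertibility of $C_1$ — is what kills the lower-right block, and it delivers the constant $\Pi$ and an explicit $U$ directly; the paper's version, in exchange, reuses a decomposition theorem it needs anyway and obtains pointwise unitary intermediate factors, at the cost of heavier machinery. Both arguments use the constant-rank assumption at the analogous place (in the paper it is buried inside the proof of \Cref{thm:nullSpaceDec}), so there is no gap in your reasoning.
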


\begin{proof}
	Because of \Cref{thm:nullSpaceDec}, for every $\state_0\in\statespace$ there exist an open neighborhood $\statespace_0\subseteq\statespace$ of $\state_0$, pointwise unitary $\widetilde U,\widetilde V$, and a pointwise invertible $\Sigma\in\cont(\statespace_0,\R^{r,r})$, such that $\widetilde U,\widetilde V,\Sigma$ have the same regularity as $E$, and
	\[
		\widetilde U\transp E\widetilde V = \begin{bmatrix} \Sigma & 0 \\ 0 & 0 \end{bmatrix}.
	\]
	In particular,
	\[
		\widetilde U\transp E
		=
		\begin{bmatrix}
			\Sigma & 0 \\ 0 & 0
		\end{bmatrix}
		\widetilde V\transp
		=
		\begin{bmatrix}
			\widetilde E_{11} & \widetilde E_{12} \\ 0 & 0
		\end{bmatrix}.
	\]
	Since $\rank(U\transp E(x_0))=\rank(E(x_0))=r$, there is a permutation matrix $\Pi\in\R^{n,n}$ such that
	\begin{equation*}
		\widetilde U\transp E\Pi = \begin{bmatrix}
			\widehat E_{11} & \widehat E_{12} \\ 0 & 0
		\end{bmatrix} ,
	\end{equation*}
	with $\widehat E_{11}(\state_0)$ invertible. Since $\widehat E_{11}$ is continuous, up to further restricting the open neighborhood $\statespace_0$, we obtain
	that $\widehat E_{11}$ is invertible in $\statespace_0$. Then, by choosing
	\[
		U = \widetilde U
		\begin{bmatrix}
			\widehat E_{11}\ntransp & 0 \\ 0 & I_{n-r}
		\end{bmatrix},
	\]
	we obtain that
	\[
		U\transp E\Pi =
		\begin{bmatrix}
			\widehat{E}_{11}^{-1} & 0 \\ 0 & I_{n-r}
		\end{bmatrix}
		\widetilde U\transp E\Pi =
		\begin{bmatrix}
			I_r & E_{12} \\ 0 & 0
		\end{bmatrix}
	\]
	for $E_{12}=\widehat E_{11}^{-1}\widehat E_{12}$.
	Finally, $\widetilde E$ and $E_{12}$ have the same regularity as $E$, by construction.
\end{proof}

\noindent One advantage of \Cref{lem:unitary_permutation} over \Cref{thm:nullSpaceDec} is that $\Pi$ can be interpreted as the Jacobian of a diffeomorphism, in fact $\Pi=\jacobian\sigma$ with $\sigma(\state)=\Pi\state$. We can now proceed with the proof of \Cref{thm:analyticGradientPair}.

\begin{proof}[Proof of \Cref{thm:analyticGradientPair}]
	Let us fix $\state_0\in\statespace$, and let us choose $\statespace_0$, $U$, $\Pi$, $E_{12}$ as in \Cref{lem:unitary_permutation}. Then, up to applying the invertible system transformation $(\sigma,U)$ with $\sigma(\state)=\Pi\state$ to $E$, we assume without loss of generality that
	\[
		E =
		\begin{bmatrix}
			I_r & E_{12} \\ 0 & 0
		\end{bmatrix},
	\]
	where $E_{12}\in\cont(\statespace,\R^{r,n-r})$ is a matrix function with analytic entries.
	Consider now the linear first order PDE system
	\begin{equation}\label{eq:PDE_for_changeOfVar}
		\gradient_{x_2}v(x) = E_{12}(x)\transp\gradient_{x_1}v(x),
	\end{equation}
	and let $f_1,\ldots,f_p\in\cont^1(\statespace,\R)$ be functionally independent functions that generate the solutions of \eqref{eq:PDE_for_changeOfVar}, i.e., such that $\gradient f_1,\ldots,\gradient f_p$ are (pointwise) linearly independent, and that the solutions of \eqref{eq:PDE_for_changeOfVar} are the functions of the form $v(x)=V(f_1(x),\ldots,f_p(x))$ for any $V\in\cont^1(\R^k,\R)$.
	For the existence of such set of solutions for $E_{12}$ with analytic entries, see e.g.~\cite{Jon07,Goursat59}.

	Let us now define $\psi_1=(f_1,\ldots,f_p)\in\cont^1(\statespace,\R^p)$. In particular, we have that $\jacobian_{x_2}\psi_1=(\jacobian_{x_1}\psi_1)E_{12}$.
	Let us now complete $f_1,\ldots,f_p$ to a maximal set of functionally independent functions $f_1,\ldots,f_n\in\cont^1(\statespace,\R)$, which can be done locally e.g.~by selecting an appropriate subset of $\state_1,\ldots,\state_n$, and let
	\[
		\psi=(f_1,\ldots,f_n)=(\psi_1,f_{p+1},\ldots,f_n)\in\cont^1(\statespace,\R^n)
	\]
	be the corresponding local diffeomorphism.
	In particular, up to further restricting the open neighborhood $\statespace_0$ of $\state_0$, we assume that $\psi:\statespace_0\to\psi(\statespace_0)$ is a diffeomorphism.
	Let then $\widetilde\statespace_0=\psi(\statespace_0)$ and let $\varphi=\psi^{-1}\in\cont^1(\widetilde\statespace_0,\statespace_0)$ denote the inverse diffeomorphism.

	Note that, since the original Hamiltonian $\hamiltonian$ is also a solution of \eqref{eq:PDE_for_changeOfVar}, it must be of the form $\hamiltonian=\specified{\widetilde\hamiltonian}\circ\psi_1$ for some $\specified{\widetilde\hamiltonian}\in\cont^1(\pi_1(\widetilde\statespace_0),\R)$, where $\pi_1:\R^n\to\R^p$ denotes the projection onto the first $p$ coordinates.
	Furthermore, it holds that
	\begin{align*}
		\jacobian\psi
		\begin{bmatrix}
			I_r & E_{12} \\ 0 & 0
		\end{bmatrix}
		(\jacobian\varphi\circ\psi)
		=
		\begin{bmatrix}
			\jacobian_{\state_1}\psi_1 & (\jacobian_{\state_1}\psi_1)E_{12} \\
			\star                      & \star
		\end{bmatrix}
		(\jacobian\psi)^{-1}
		=
		\begin{bmatrix}
			\jacobian\psi_1 \\ \star
		\end{bmatrix}
		(\jacobian\psi)^{-1}
		=
		\begin{bmatrix}
			I_p & 0 \\ \star & \star
		\end{bmatrix}.
	\end{align*}
	Then, up to applying the invertible system transformation $(\varphi,(\jacobian\psi\circ\varphi))$,
	we assume without loss of generality that $E$ is of the form
	\[
		E(x) =
		\begin{bmatrix}
			I_p & 0 \\ E_{12}(x) & E_{22}(x)
		\end{bmatrix}.
	\]
	and that $\hamiltonian$ admits a specified Hamiltonian $\specified\hamiltonian\in\cont^1(\pi_1(\statespace_0),\R)$.

	With subsequent left multiplications and right permutations (and therefore system transformations of the form $(\sigma,U)$ with $\sigma(\state)=\Pi\state$), we further bring $E$ to the form
	\[
		E(x) =
		\begin{bmatrix}
			I_p & 0 & 0 \\ 0 & 0 & 0 \\ 0 & E_{32}(x) & E_{33}(x)
		\end{bmatrix},
	\]
	where $E_{32}\in\cont(\statespace_0,\R^{r-p,n-r})$ and $E_{33}\in\cont(\statespace_0,\R^{r-p,r-p})$, with $E_{33}(\state_0)$ invertible.
	In particular,
	up to restricting $\statespace_0$ to an open neighborhood of $\state_0$ where $E_{33}$ is pointwise invertible, and applying an appropriate left multiplication with its inverse, we can assume without loss of generality that $E$ is of the form
	\[
		E(x) =
		\begin{bmatrix}
			I_p & 0 & 0 \\ 0 & 0 & 0 \\ 0 & E_{32}(x) & I_{r-p}
		\end{bmatrix}.
	\]
	Let us partition $x=(x_1,x_2,x_3)\in\R^p\times\R^{n-r}\times\R^{r-p}$ and $z=(z_1,z_2,z_3)\in\R^p\times\R^{n-r}\times\R^{r-p}$ .
	The gradient pair property $E\transp\costate=\gradient\hamiltonian$
	then implies
	\[
		\begin{bmatrix}
			z_1 \\ E_{32}\transp z_3 \\ z_3
		\end{bmatrix}
		=
		\begin{bmatrix}
			I_p & 0 & 0 \\ 0 & 0 & E_{32}\transp \\ 0 & 0 & I_{r-p}
		\end{bmatrix}
		\begin{bmatrix}
			z_1 \\ z_2 \\ z_3
		\end{bmatrix}
		= E\transp z = \gradient\hamiltonian =
		\begin{bmatrix}
			\gradient\specified{\hamiltonian}\circ\pi_1 \\ 0 \\ 0
		\end{bmatrix},
	\]
	i.e., $\costate_1=\gradient\specified{\hamiltonian}\circ\pi_1$ and $z_3=0$.
\end{proof}







\end{document}